\documentclass{article}

\usepackage{a4,amsmath,amsthm,amssymb,url}

\numberwithin{equation}{section}

\theoremstyle{plain}
\newtheorem{thm}{Theorem}[section]

\newtheorem{cor}[thm]{Corollary}

\newtheorem{lem}[thm]{Lemma}
\newtheorem{pro}[thm]{Problem}

\theoremstyle{definition}

\newtheorem{de}[thm]{Definition}

\usepackage{accents}

\setlength{\textwidth}{6.1in}
\setlength{\textheight}{9in}
\setlength{\topmargin}{-.5in} \setlength{\headheight}{.25in}
\setlength{\headsep}{.25in} \setlength{\topskip}{.25in}
\setlength{\oddsidemargin}{.2in} \setlength{\evensidemargin}{0in}

\newcommand{\algop}[2]{\langle {#1}, {#2} \rangle}

\newcommand{\setsuchthat}{\ : \ }
\newcommand{\A}{{\mathbf A}}
\renewcommand{\aa}{\bar{a}}
\newcommand{\aaa}{\bar{\bar{a}}}

\newcommand{\B}{{\mathbf B}}
\newcommand{\C}{{\mathbf C}}

\newcommand{\Clo}{\mathrm{Clo}}
\newcommand{\Con}{\mathrm{Con}}
\newcommand{\D}{{\mathbf D}}
\newcommand{\Def}{\mathrm{Def}}

\newcommand{\G}{{\mathbf G}}

\newcommand{\N}{{\mathbb{N}}}

\renewcommand{\o}{o}
\newcommand{\oo}{\bar{\o}}
\newcommand{\ooo}{\bar{\bar{\o}}}
\newcommand{\kset}{\underline{k}}

\newcommand{\n}{\underline{n}}
\newcommand{\nn}{N}
\renewcommand{\P}{\mathcal{P}}

\newcommand{\Pol}{\mathrm{Pol}}

\newcommand{\R}{{\mathcal{R}}}
\newcommand{\Ri}{{\mathbf{R}}}

\newcommand{\mtt}[4]{\left(\begin{array}{cc} {#1} & {#2} \\ {#3} & {#4} \end{array}\right)}

\newcommand{\X}{{\mathbf X}}
\newcommand{\Z}{{\mathbb{Z}}}
\newcommand{\ISP}[1]{\operatorname{\mathbb{ISP}}(#1)}
\newcommand{\ISPP}[1]{\operatorname{\mathbb{ISP^+}}(#1)}
\newcommand{\ISCP}[1]{\operatorname{\mathbb{IS}_C\mathbb{P}^+}(#1)}

\date{\today}

\begin{document}
\title{Supernilpotence prevents dualizability}

\author{Wolfram Bentz \\
\small{Centro de \'Algebra da Universidade de Lisboa} \\
\small{1649-003 Lisboa, Portugal, wfbentz@fc.ul.pt} \\ 
\and Peter Mayr \\
\small{Centro de \'Algebra da Universidade de Lisboa} \\
\small{ 1649-003 Lisboa, Portugal} \\{\footnotesize \&}\\
\small{Institute for Algebra, Johannes Kepler University Linz} \\
\small{ 4040 Linz, Austria, pxmayr@gmail.com}}

\date{}
\pagestyle{plain}

\maketitle
\begin{abstract}
We address the question of the dualizability of nilpotent Mal'cev algebras, showing that 
nilpotent
 finite Mal'cev algebras with a non-abelian supernilpotent congruence are inherently  non-dualizable.
In particular, finite
nilpotent
 non-abelian Mal'cev algebras of finite type are non-dualizable if they are direct products of algebras of prime power order.

We show that these results cannot be generalized to nilpotent algebras by giving an
example of a group expansion of infinite type that is nilpotent and non-abelian, but dualizable.
To our knowledge this is the first construction of a non-abelian nilpotent dualizable algebra.
 It has the curious property that all its non-abelian finitary reducts
with group operation are non-dualizable. We were able to prove dualizability by utilizing a new
clone theoretic approach 
developed by Davey, Pitkethly, and Willard.

Our results suggest that supernilpotence plays an important role in characterizing dualizability among Mal'cev algebras.
\vskip 2mm

\noindent\emph{$2010$ Mathematics Subject Classification\/}. 03C05, 08A05,08B05,08C15.

\vskip 2mm

\noindent \emph{Keywords}: Natural duality, Mal'cev algebra, nilpotence, partial clones.

\end{abstract}
\section{Introduction}
\label{sec:intro}

Natural dualities are representations of elements of an algebra as continuous structure preserving maps
(obtained in a certain natural way). For example, Stone duality represents Boolean algebras by
 Boolean spaces. A finite algebra $\A$ is dualizable if every algebra from the quasi-variety generated by $\A$ has such a representation.

Clark and Davey (\cite{CD:NDWA}, p. 291) asked the question:
``Characterize the dualizable finite algebras in a given class $\C$ of algebras", suggesting
(among other options) to let $\C$ be the class of all algebras generating a congruence permutable variety
(i.e. algebras with a Mal'cev term). Recently at the Conference on Order, Algebra, and Logics in Nashville 2007,
Ross Willard \cite{Wi:FUPC} revived this question in light of a new approach to show dualizability.

While Mal'cev algebras are in general considered to be well-understood, the question of their dualizability has so far only been addressed for various classes of classical algebras (see below for examples).
 This contrasts sharply with the situation for algebras in congruence distributive varieties, which
have been shown to be dualizable if and only if they have a near-unanimity term~\cite{DHM:NAOG}.

In this paper, we will make a start on the dualizablity problem for Mal'cev algebras by examining the role of
nilpotence. As usual, we will restrict commutator theoretic properties
to the setting of congruence-modular varieties; note that this implies that any nilpotent algebra is a Mal'cev algebra \cite[Theorem 6.2]{FM:CTFC}.

Our interest in nilpotence comes from the observation that in the previously classified classes of Mal'cev
algebras, all dualizable nilpotent algebras were in fact abelian. As we will see, this property is false
for Mal'cev algebras in general, but holds if one replaces nilpotence with the slightly stronger condition of
supernilpotence. Our main theorem can be expressed in
the usual terminology of nilpotence as follows:

\begin{thm}  \label{th:nd}
 Let $\A$ be a finite nilpotent algebra of finite type in a congruence modular variety.
 Assume that $\A$ is non-abelian and a direct product of algebras of prime power order.
 Then $\A$ is inherently non-dualizable.
\end{thm}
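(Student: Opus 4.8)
The plan is to obtain Theorem~\ref{th:nd} from the following sharper statement and then prove the sharper statement by a ghost-element argument.

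\emph{Main Lemma.} A finite algebra of finite type in a congruence modular variety that has a non-abelian supernilpotent congruence is inherently non-dualizable.

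\emph{Reduction of Theorem~\ref{th:nd} to the Main Lemma.} Finite nilpotent algebras of prime power order in congruence modular varieties are supernilpotent (this follows from the theory of higher commutators; see work of Kearnes and of Aichinger--Mudrinski), and a finite direct product of supernilpotent algebras is supernilpotent; hence the algebra $\A$ of Theorem~\ref{th:nd} is itself supernilpotent. Since $\A$ is moreover non-abelian, the top congruence $1_{\A}$ is a non-abelian supernilpotent congruence, and the Main Lemma applies with $\beta = 1_{\A}$. (The Main Lemma is stated for a possibly proper congruence $\beta$ because this is the form needed for the broader result announced in the abstract, where $\A$ itself need only be nilpotent; it also makes explicit that the construction is carried out inside $\A$ using $\beta$ as a witness, rather than by descending to a factor — a step which would be illegitimate, since a direct factor is only a quotient of $\A$.)

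\emph{The construction.} Fix a non-abelian supernilpotent congruence $\beta$ of $\A$ and let $c \ge 2$ be its supernilpotence class, so that the $(c+1)$-fold higher commutator of $\beta$ with itself vanishes while the $c$-fold one does not, with $c \ge 2$ because $\beta$ is non-abelian. The commutator machinery of Freese--McKenzie~\cite{FM:CTFC} together with the structure theory of supernilpotent Mal'cev algebras of finite type (Aichinger, Aichinger--Mudrinski) supplies a term $t(x_1,\dots,x_\kappa,\bar z)$ with $2 \le \kappa \le c$ and a parameter tuple for $\bar z$ such that, for a suitable element $0 \in A$, the induced $\kappa$-ary operation is \emph{absorbing at $0$} (it outputs $0$ whenever one of its inputs is $0$) and is not the constant $0$, all lower-order discrepancies being controlled by the vanishing of the $(c+1)$-fold commutator. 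This ``$\kappa$-additive, $\kappa$-absorbing'' term drives the construction: for each $n \in \N$ one builds a finite algebra $\B_n \in \ISP{\A}$ sitting inside a power $\A^{X_n}$ with $|X_n|$ bounded by a fixed polynomial in $n$, generated by elements $u^{(n)}_1, u^{(n)}_2, \dots$ chosen so that the values $t(u^{(n)}_{i_1},\dots,u^{(n)}_{i_\kappa},\bar z)$ over the admissible index tuples encode a combinatorial pattern of perfect-matching or Latin-square type whose ``entanglement'' grows unboundedly with $n$. Passing to an appropriate limit of the $\B_n$ — a subalgebra of $\A^{\prod_n X_n}$, or an ultraproduct — one exhibits a \emph{ghost element} $g$: an element in the topological closure of the limit algebra but not in the algebra itself, with the property that every homomorphism to $\A$ from the subalgebra generated by the $u^{(n)}_i$ extends uniquely over $g$. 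By the inherent-non-dualizability (``ghost'') lemma of Clark and Davey~\cite{CD:NDWA}, in the form used by Willard~\cite{Wi:FUPC}, the presence of such a ghost together with the unboundedness of the $\B_n$ forces $\A$ to be inherently non-dualizable.

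\emph{The main obstacle.} The heart of the proof is the ghost step: one must show simultaneously that $g$ genuinely fails to lie in the subalgebra generated by the $u^{(n)}_i$ and that every partial homomorphism into $\A$ extends over it. This is exactly where \emph{super}nilpotence — not just nilpotence — is essential: a term realizing $g$ would, via a commutator computation, force a non-trivial higher commutator of $\beta$ of arity larger than $c$, contradicting supernilpotence, and the same computation breaks down for algebras that are nilpotent but not supernilpotent, in accordance with the dualizable non-abelian nilpotent example constructed later in the paper. By contrast, verifying that each $\B_n$ embeds into a finite power of $\A$ (so lies in $\ISP{\A}$), that the homomorphism-extension property is uniform in $n$, and, in the reduction, that nilpotent prime-power order does yield supernilpotence, is comparatively routine commutator- and duality-theoretic bookkeeping.
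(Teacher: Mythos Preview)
Your reduction step is exactly the paper's: one shows a ``Main Lemma'' (the paper's Theorem~\ref{th:sn}) that any finite nilpotent algebra in a congruence modular variety with a non-abelian supernilpotent congruence is inherently non-dualizable, and then observes that a finite nilpotent algebra of finite type that factors into prime-power pieces is supernilpotent (Aichinger--Mudrinski), so Theorem~\ref{th:nd} follows with $\alpha = 1_{\A}$. Two small remarks on the reduction: the paper's Main Lemma does \emph{not} need finite type (that hypothesis is used only to obtain supernilpotence from the prime-power factorisation), and it does assume $\A$ is nilpotent (needed for the Mal'cev term and for the translation bijections of Lemma~\ref{le:translate}), which your formulation omits.

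Where your proposal diverges from the paper, and becomes too vague to be a proof, is the ghost construction itself. You describe a sequence $\B_n \le \A^{X_n}$ with $|X_n|$ polynomial in $n$, followed by a limit or ultraproduct, and a ghost characterised by a homomorphism-extension property. The paper does none of this: it builds a \emph{single} subalgebra $\D \le (\B^{\mathcal P(\underline{k})})^{\Z}$ directly, with explicit palindromic generators $d_i = (\dots,\bar o, u_1, u_2, \bar o,\dots,\bar o, u_2, u_1, \bar o,\dots)$, and applies the non-dualizability lemma of Clark--Davey--Pitkethly in its kernel-block form (each homomorphism $\D\to\B$ has a unique large block on $D_0$), not in an extension-of-homomorphisms form. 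The crucial ideas you have not supplied are (i) the case split according to whether \emph{every} rank-$k$ commutator is alternating on the $\alpha$-class of $o$ (Case~1) or some commutator fails this (Case~2), which governs the sign pattern of the elements $v_{i,j}$; (ii) the parity invariant of Lemma~\ref{le:parity}, which is what actually excludes the ghost $g$ from $\D$ --- this is not a ``commutator of arity $>c$'' argument as you suggest, but a signed sum over $\Z\times\mathcal P(\underline{k})$ that every element of $\D$ congruent to $\bar{\bar o}$ mod $\gamma=[\alpha,\alpha]$ must satisfy; and (iii) the pigeonhole argument (Lemmas~\ref{le:few}--\ref{le:large}) bounding the number of $v_{i,i+1}$ not killed by a given homomorphism. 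Your phrase ``perfect-matching or Latin-square type'' does not capture any of this, and without these three ingredients the plan is not yet a proof.
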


 This implies several known non-dualizability results, for example:
\begin{enumerate}
\item finite groups with non-abelian Sylow subgroups are not dualizable~\cite{QS:NGAN};
\item a finite ring with $1$ is not dualizable if the square of its Jacobson radical is not $0$
 (see~\cite{CI:NDQ} for commutative rings);
\item a finite ring is not dualizable if it contains a nilpotent subring that is not
 a zero-ring~\cite{Sz:FNR}.
\end{enumerate}
An application of the theorem to a class of algebras that were not previously known to be
non-dualizable are the finite non-abelian loops whose multiplication group
(the group generated by all left and right translations) is nilpotent (\cite{Ve:PGLG}, Proposition 3.2,
see also \cite{Br:CTL}, Corollary III, p. 282).

 We will prove Theorem~\ref{th:nd} in Section~\ref{sec:proof}. In fact, there we will obtain a more
 general non-dualizability result for nilpotent algebras with a non-abelian, supernilpotent congruence
 (Theorem~\ref{th:sn}).
 In the next section, we will define this notion of supernilpotence (a stronger condition than nilpotence),
 give equivalent formulations of the conditions of the theorem, and prove several auxiliary results on
 Mal'cev algebras.
 In Section~\ref{sec:proof} we will then generalize a construction that Szab{\'o} used on rings
 in~\cite{Sz:FNR} to our setting to prove Theorem~\ref{th:nd}.

In Section \ref{sec:clone}, we  describe 
 a new  clone theoretic approach to duality that was originally suggested by Willard~\cite{Wi:FUPC} and further developed by Davey, Pitkethly, and Willard in~\cite{DP:LAE}.
 Finally, in Section~\ref{sec:dualizable}, we exhibit a non-abelian nilpotent expansion of $\algop{\Z_4}{+}$
 with infinitely many operations that is dualizable. As far as we are aware this is the first example
 of a dualizable algebra that is nilpotent but non-abelian. It shows that it is really supernilpotence
 which prevents dualizability, not nilpotence on its own.
 It also demonstrates that Theorem~\ref{th:nd} does not generalize to algebras of infinite type.
 Our example appears to be the first instance of a dualizable algebra of infinite signature,
 all of whose finitary non-abelian reducts with group operation are non-dualizable.

Note that we will delay defining the notion of dualizability until Section \ref{sec:clone}, when technical
details will become more important. We will instead
 provide a standard lemma giving conditions for non-dualizability.
 We refer the reader to Clark and Davey \cite{CD:NDWA}
for a background on natural duality.

\section{Nilpotence and supernilpotence} \label{sec:nilpotent}

 We denote the set of all $k$-ary term operations on an algebra $\A$ by $\Clo_k(\A)$ and call
 $\Clo(\A) := \bigcup_{k\in\N} \Clo_k(\A)$ the \emph{clone of term operations on} $\A$~\cite[Definition~4.2]{MMT:ALVV}.
 The clone of \emph{polynomial functions} $\Pol(\A)$ on $\A := \algop{A}{F}$ is formed by the fundamental
 operations $F$, the constant functions on $A$, the projections from $A^k$ to $A$ for $k\in\N$ --
 and all compositions thereof~\cite[Definition~4.4]{MMT:ALVV}.

 In~\cite{AM:SAHC} a stronger condition than nilpotence is defined, based on the concept of the higher
 commutators $[\alpha_1, \ldots, \alpha_k]$ as introduced by Bulatov \cite{Bu:NFMA}.

\begin{de} \cite{Bu:NFMA}
Let $k \ge 2$, and $\alpha_1,\ldots,\alpha_k, \nu$ be congruences on an algebra $\A$. We say that
$\alpha_1,\dots, \alpha_{k-1}$ {\em centralize} $\alpha_k$ modulo $\nu$ if for all polynomial operations
 $f(\bar{x}_1,\dots,\bar{x}_k)$ of $\A$ and tuples
 $\bar{a}_1,\ldots,\bar{a}_k$, $\bar{b}_1,\ldots,\bar{b}_k$ over $A$ that satisfy
\begin{enumerate}
\item $\bar{a}_i \equiv_{\alpha_i} \bar{b}_i$ for all $i\in\{1,\dots,k\}$ and
\item $f(\bar{x}_1,\ldots,\bar{x}_{k-1},\bar{a}_k) \equiv_\nu f(\bar{x}_1,\dots,\bar{x}_{k-1},\bar{b}_k)$
 for all
$$(\bar{x}_1,\dots,\bar{x}_{k-1})\in \left(\{\bar{a}_1,\bar{b}_1\}\times \cdots \times\{\bar{a}_{k-1},\bar{b}_{k-1}\}\right)-\{(\bar{b}_1,\dots,\bar{b}_{k-1})\}$$

\end{enumerate}
we also have
 $$f(\bar{b}_1,\dots,\bar{b}_{k-1},\bar{a}_k)\equiv_\nu f(\bar{b}_1,\dots,\bar{b}_{k-1},\bar{b}_k).$$

 We now let the $k$-\emph{ary commutator} $[\alpha_1, \ldots, \alpha_k]$ be the smallest congruence
 $\nu$ on $\A$ such that $\alpha_1, \dots, \alpha_{k-1}$ centralize $\alpha_k$ modulo $\nu$.
\end{de}

 One can show that for a group $\G$ with normal subgroups $N_1,\ldots, N_k$ the $k$-ary commutator corresponds
 to the product of the iterated binary commutators from classical group theory~\cite[Lemma 3.6]{Ma:MASC}.
 For a ring $\Ri$ with ideals $I_1,\ldots, I_k$ the $k$-ary commutator corresponds to the ideal generated
 by the products of $I_1,\ldots, I_k$ in all permutations~\cite[Lemma 3.5]{Ma:MASC}.

 We refer the reader to~\cite{AM:SAHC} and~\cite{Ma:MASC}
 regarding further details of higher commutators. For our results, it is sufficient to know
 that the $k$-ary higher commutator is well-defined and
  coincides with the term condition commutator from~\cite{FM:CTFC} if $k=2$
 (see Lemma~\ref{le:hcgen} below for a description of $[\alpha_1,\dots,\alpha_k]$ specialized to nilpotent
 algebras).

\begin{de} \cite{AM:SAHC}
 Let $k \in \N$, and $\A$  an algebra in a congruence permutable variety. A congruence $\alpha$ on $\A$ is
 \emph{$k$-supernilpotent} if
  $$[\underbrace{\alpha,\dots,\alpha}_{k+1}]=0,$$
 with $0$ the equality relation on $A$.
 The algebra $\A$ is \emph{$k$-supernilpotent} if the total relation $1$ on $A$ is $k$-supernilpotent.
  An algebra or a congruence is \emph{supernilpotent} if it is $k$-supernilpotent for some $k$.
 \end{de}

\begin{de}\cite[p. 179]{Ke:CMVW}, \cite[p. 124]{FM:CTFC}
 Let $A$ be a set, let $k\in\N$.
 Then $c\colon A^{k+1}\to A$ is a \emph{commutator} if $\forall x_1,\dots,x_{k},z\in A$:
\[ z\in\{x_1,\dots,x_{k}\}\Rightarrow c(x_1,\dots,x_k,z) = z. \]
 The commutator $c$ has \emph{rank} $k$ if $\exists a_1,\dots,a_{k},\o\in A\colon c(a_1,\dots,a_k,\o) \neq \o$.
 Otherwise we say it is \emph{trivial}.
\end{de}

 By~\cite[Lemma 7.5]{AM:SAHC} an algebra $\A$ in a congruence permutable variety is $k$-\emph{supernilpotent} if and only if
 $\A$ is nilpotent and all non-trivial commutators in $\Pol(\A)$ have rank at most $k$
 (see also Lemma~\ref{le:hcgen} below).

 Let $\A$ be a finite nilpotent algebra of finite type that generates a congruence modular variety.
We then have that the following properties are equivalent:
\begin{enumerate}
\item \label{it:1}
 $\A$ is a direct product of algebras of prime power order.
\item \label{it:2}
 $\exists M$ such that every non-trivial commutator in $\Clo(\A)$ has rank at most $M$.
\item \label{it:3}
 $\A$ is supernilpotent.
\end{enumerate}

 \eqref{it:1}$\Rightarrow$\eqref{it:2} follows from~\cite[Theorem 14.16]{FM:CTFC}.
 \eqref{it:2}$\Rightarrow$\eqref{it:1} is due to Kearnes~\cite[Theorem 3.14]{Ke:CMVW}.
 \eqref{it:1}$\Leftrightarrow$\eqref{it:3} was proved by Aichinger and Mudrinski~\cite[Lemma 7.6]{AM:SAHC}.

\begin{lem}  \label{le:c}
 Let $\A$ be an algebra with Mal'cev term operation $m$, let $\alpha$ be a central congruence of $\A$,
 and let $a,b,c,\o\in A$ with $(c,\o)\in\alpha$. Then
\[ m(m(a,\o,b),\o,c) = m(a,\o,m(b,\o,c)) = m( m(a,\o,c),\o,b) \]
 and
\[ m(a,c,\o) = m(a,\o,m(\o,c,\o)). \]
\end{lem}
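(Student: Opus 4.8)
The plan is to run everything off the term condition $C(1,\alpha;0)$, which is available because $\alpha$ is central: in a Mal'cev (hence congruence modular) variety the binary commutator is symmetric, so $[\alpha,1]=0$ is the same as $[1,\alpha]=0$, i.e. as the statement that for every polynomial $f(x,\bar z)$ of $\A$, all $a',b'\in A$ and all $\bar p\equiv_\alpha\bar q$, the equality $f(a',\bar p)=f(a',\bar q)$ forces $f(b',\bar p)=f(b',\bar q)$.

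First I would prove a cancellation fact: if $(u,v)\in\alpha$ and $m(u,v,w)=w$ for some $w\in A$, then $u=v$. Apply $C(1,\alpha;0)$ to $f(x,z):=m(z,v,x)$ and the $\alpha$-pair $(u,v)$; since $f(w,u)=m(u,v,w)=w=m(v,v,w)=f(w,v)$, the term condition gives $m(u,v,x)=m(v,v,x)=x$ for every $x\in A$, and taking $x=v$ together with $m(u,v,v)=u$ yields $u=v$.

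With this in hand the three displayed equalities $s=t$ are all instances of one scheme. Each side of each equality is a term expression in $a,b,o$ and the single $\alpha$-element $c$, and substituting $o$ for $c$ makes the two sides into the \emph{same} element (for example $m(m(a,o,b),o,c)$ and $m(a,o,m(b,o,c))$ both collapse to $m(a,o,b)$); since $m$ preserves $\alpha$ and $(c,o)\in\alpha$, both $s$ and $t$ are $\alpha$-related to that common element, hence $(s,t)\in\alpha$. By the cancellation fact it now suffices to show $m(s,t,o)=o$. For that, read $s$ and $t$ as polynomials in the slot occupied by $c$ and also promote one of the parameters $a,b$ to a fresh variable $x$, choosing which one so that setting $x=o$ makes the two expressions literally identical: promote $a$ for $m(m(a,o,b),o,c)=m(a,o,m(b,o,c))$ and for $m(a,c,o)=m(a,o,m(o,c,o))$, and promote $b$ for $m(m(a,o,c),o,b)=m(m(a,o,b),o,c)$. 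This gives a polynomial $T(x,z):=m(s_x(z),t_x(z),o)$ with $T(o,z)=o$ for all $z$ (the inner expressions coincide) and $T(x,o)=o$ for all $x$ (at $z=o$ they coincide, and $m(w,w,o)=o$). Applying $C(1,\alpha;0)$ to $T$ with first-coordinate values $o$ and $a$ (resp. $o$ and $b$) and the $\alpha$-pair $(c,o)$, from $T(o,c)=T(o,o)$ we obtain $T(a,c)=T(a,o)=o$, which is $m(s,t,o)=o$; the cancellation fact then gives $s=t$. (The first display is two such applications, the second display one.)

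The one point that needs care is exactly that the term condition yields only $m(s,t,o)=o$ rather than $s=t$ outright: the ``subtraction base'' $o$ need not lie in the common $\alpha$-class of $s$ and $t$, so one cannot finish by cancelling in the abelian group carried by that class. The cancellation fact of the second paragraph --- itself a consequence of centrality, not of the group structure --- is what bridges that gap, and is the real content of the argument; the rest is the bookkeeping of picking the right parameter to promote and the routine observation that term operations preserve $\alpha$.
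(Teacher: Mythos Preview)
Your proof is correct but follows a genuinely different route from the paper's. The paper quotes Proposition~5.7 of Freese--McKenzie as a black box: from $[\alpha,1]=0$ one gets the interchange identity
\[
m(m(a_1,a_2,a_3),m(b_1,b_2,b_3),m(c_1,c_2,c_3))=m(m(a_1,b_1,c_1),m(a_2,b_2,c_2),m(a_3,b_3,c_3))
\]
whenever $b_i\equiv_\alpha c_i$, and then each of the three equalities is a one-line rewrite using this interchange together with the Mal'cev laws. You instead work straight from the term condition $C(1,\alpha;0)$: you first extract a cancellation principle (if $(u,v)\in\alpha$ and $m(u,v,w)=w$ for some $w$, then $u=v$), and then for each equality $s=t$ you build the auxiliary polynomial $T(x,z)=m(s_x(z),t_x(z),o)$, verify it is constant along both the $x$-axis at $o$ and the $z$-axis at $o$, fire the term condition once to get $m(s,t,o)=o$, and finish via cancellation. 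The paper's argument is shorter if one is willing to import the interchange law; yours is more self-contained and makes explicit exactly which instance of centrality is being used at each step. Your observation that the ``subtraction base'' $o$ need not lie in the $\alpha$-class of $s,t$, so that one cannot simply cancel in the abelian group on that class, is a nice point and is precisely why the separate cancellation lemma is needed in your approach.
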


\begin{proof}
 As $[\alpha,1]=0$, \cite[Proposition 5.7]{FM:CTFC} implies that
$$\begin{array}{cl}
&m(m(a_1,a_2,a_3), m(b_1,b_2,b_3), m(c_1,c_2,c_3))\\
=& m(m(a_1,b_1,c_1), m(a_2,b_2,c_2), m(a_3,b_3,c_3))
\end{array}$$
whenever $b_i \equiv_\alpha c_i$ for all $i\in\{1,2,3\}$. Hence
$$\begin{array}{rcl} %
m(m(a,o,b),o,c) & = & m(m(a,o,b),m(o,o,o),m(o,o,c)) \\
&=&m(m(a,o,o),m(o,o,o),m(b,o,c))\\
&=&m(a,o,m(b,o,c))\end{array}$$
$$\begin{array}{rcl} %
m(m(a,o,b),o,c) & = & m(m(a,o,b),m(o,o,o),m(c,o,o)) \\
&=&m(m(a,o,c),m(o,o,o),m(b,o,o))\\
&=&m(m(a,o,c),o,b)\end{array}$$
and
$$\begin{array}{rcl} %
m(a,c,\o) & = & m(m(a,\o,\o),m(c,c,c),m(c,c,\o)) \\
&=&m(m(a,c,c),m(\o,c,c),m(\o,c,\o))\\
&=&m(a,\o,m(\o,c,o))\end{array}.$$
\end{proof}

\begin{lem} \label{le:translate} \cite[Lemma 7.3, Corollary 7.4]{FM:CTFC}
 Let $\A$ be a nilpotent algebra with Mal'cev term operation $m$.
 Then there exists $f\in\Clo_3(\A)$ such that $\forall x,b,c\in\A\colon$
\[ m( f(x,b,c), b,c ) = x \text{ and } f( m(x,b,c), b,c ) = x. \]
 In particular, for all $b,c\in A$
\[ t\colon A\to A, x\mapsto m(x,b,c) \]
 is a bijection.
\end{lem}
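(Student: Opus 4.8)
The plan is to argue by induction on the nilpotency class $n$ of $\A$, at each stage producing $f$ together with a verification that $x\mapsto f(x,b,c)$ is a two-sided inverse of the translation $t\colon x\mapsto m(x,b,c)$.

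\emph{Base case $n\le 1$.} Here $\A$ is abelian, so $[1,1]=0$, and by \cite[Proposition~5.7]{FM:CTFC} (the identity used in the proof of Lemma~\ref{le:c}, now applied with $\alpha=1$, hence with no restriction on the arguments) the Mal'cev term $m$ is a homomorphism $\A^3\to\A$. I claim $f(x,b,c):=m(x,c,b)\in\Clo_3(\A)$ works. Indeed, using the Mal'cev identities $m(u,v,v)=u$ and $m(u,u,v)=v$ and the homomorphism property,
\[
m(f(x,b,c),b,c)=m\bigl(m(x,c,b),m(b,b,b),m(b,b,c)\bigr)=m\bigl(m(x,b,b),m(c,b,b),m(b,b,c)\bigr)=m(x,c,c)=x,
\]
and symmetrically $f(m(x,b,c),b,c)=m\bigl(m(x,b,c),m(c,c,c),m(c,c,b)\bigr)=m\bigl(m(x,c,c),m(b,c,c),m(c,c,b)\bigr)=m(x,b,b)=x$.

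\emph{Inductive step.} Let $n\ge 2$ and assume the statement for smaller class. Choose a central congruence $\theta$ of $\A$ (that is, $[\theta,1]=0$) with $\A/\theta$ nilpotent of class $n-1$; for instance let $\theta$ be the $n$-th term of the lower central series of $\A$. Since $\A/\theta$ is a quotient, the ternary term operation provided by the induction hypothesis for $\A/\theta$ is induced by some $g\in\Clo_3(\A)$, so that
\[
m(g(x,b,c),b,c)\equiv_\theta x\quad\text{and}\quad g(m(x,b,c),b,c)\equiv_\theta x\qquad\text{for all }x,b,c\in A .
\]
Now set $f(x,b,c):=m\bigl(g(x,b,c),\,m(g(x,b,c),b,c),\,x\bigr)\in\Clo_3(\A)$. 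The idea is that $x\mapsto m(x,b,c)$ restricts to an affine-type bijection on each $\theta$-class (since $\theta$ is abelian), $g$ already inverts $t$ modulo $\theta$, and the outer applications of $m$ in $f$ perform the residual correction inside the $\theta$-class. To verify $m(f(x,b,c),b,c)=x$ and $f(m(x,b,c),b,c)=x$, write $u:=g(x,b,c)$ and $v:=m(u,b,c)$; then $v\equiv_\theta x$, so in the expression $m\bigl(m(u,v,x),b,c\bigr)$ the second and third arguments of the inner $m$ are $\theta$-related, and one may push the outer translation $m(-,b,c)$ past the inner $m$ and collapse the result using Lemma~\ref{le:c} (with $\alpha=\theta$) together with the basic Mal'cev identities; the same computation, with $x$ replaced by $m(x,b,c)$, handles the second equation. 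This completes the induction, and for fixed $b,c$ the function $x\mapsto f(x,b,c)$ is then a two-sided inverse of $t$, so $t$ is a bijection.

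\emph{Main obstacle.} The delicate point is the correction step. One cannot simply invoke the full ``abelian'' homomorphism identity for $m$, because $b$ and $c$ are arbitrary and need not be $\theta$-related; the argument must be routed entirely through the pair $x\equiv_\theta m(g(x,b,c),b,c)$, applying the centralizer identities of Lemma~\ref{le:c} only in the coordinates where the inputs to $m$ are genuinely $\theta$-related, and one must check that the single term $f$ displayed above simultaneously satisfies both required equations.
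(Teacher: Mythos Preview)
The paper does not prove this lemma at all; it is simply cited from \cite[Lemma~7.3, Corollary~7.4]{FM:CTFC}, so there is no in-paper argument to compare against. Your plan --- induction on the nilpotency class, with an explicit correction term built from the Mal'cev operation at each step --- is essentially the Freese--McKenzie argument, and your abelian base case is carried out correctly.

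Your inductive construction $f(x,b,c):=m\bigl(g(x,b,c),\,m(g(x,b,c),b,c),\,x\bigr)$ also works, but the verification you sketch has a real gap: Lemma~\ref{le:c} does not do what you need. Every identity in Lemma~\ref{le:c} (and the interchange identity used in its proof, which requires $b_i\equiv_\theta c_i$) concerns expressions of the shape $m(\cdot,\o,\cdot)$ with a fixed base point $\o$ and perturbations confined to its $\theta$-class; in your target expression $m(m(u,v,x),b,c)$ the outer arguments $b,c$ are arbitrary and need not be $\theta$-related to anything, so no instance of Lemma~\ref{le:c} applies --- exactly the difficulty you flag in your ``main obstacle'' paragraph but do not resolve. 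What is actually needed is the raw term condition $C(\theta,1;0)$. With $u=g(x,b,c)$ and $v=m(u,b,c)\equiv_\theta x$, set
\[
T(y;z_1,z_2,z_3):=m\bigl(m(z_1,m(z_1,z_2,z_3),y),z_2,z_3\bigr).
\]
Then $T(y;z_1,z_2,z_2)=y$ identically, and $T(v;u,b,c)=m(u,b,c)=v=T(v;u,b,b)$; since $(v,x)\in\theta$, $C(\theta,1;0)$ gives $T(x;u,b,c)=T(x;u,b,b)=x$, which is precisely $m(f(x,b,c),b,c)=x$. For the second equation put $u':=g(m(x,b,c),b,c)\equiv_\theta x$ and use
\[
T'(y;z_1,z_2,z_3):=m\bigl(y,m(y,z_2,z_3),m(z_1,z_2,z_3)\bigr);
\]
one has $T'(y;z_1,z_2,z_2)=z_1$ and $T'(x;x,b,c)=x=T'(x;x,b,b)$, so $C(\theta,1;0)$ yields $f(m(x,b,c),b,c)=T'(u';x,b,c)=T'(u';x,b,b)=x$. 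Thus your $f$ is correct, but the justification must invoke $C(\theta,1;0)$ directly rather than Lemma~\ref{le:c}.
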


 In the proof of Theorem~\ref{th:nd} we will need a more explicit version of Lemma 14.6 from~\cite{FM:CTFC}
 describing term operations on nilpotent algebras which we will state next.

 Set $\kset := \{1,\ldots,k\}$, and for $x\in A^k$ and $S\subseteq \kset $, let
\[ x_S := (x_i)_{i\in S}. \]

\begin{lem} \cite[cf. Lemma 14.6]{FM:CTFC} \label{le:csum}
 Let $\A$ be a finite nilpotent algebra with Mal'cev term operation $m$,
 let $k\in\N$, and fix a linear order $\preceq$ on the power set of $\kset$.

 Let $f\in\Clo_{k+1}(\A)$. Then for every $S\subseteq \kset,S\neq\emptyset,$ there exists a commutator
 $c_S\in\Clo_{|S|+1}(\A)$ such that for all $x\in A^k,z\in A$
\[ f(x,z) = f(z,\dots,z,z) +_z \sum_{S\subseteq \kset,S\neq\emptyset} c_S(x_S,z) \]
 where the sum is taken with respect to $a+_zb := m(a,z,b)$, associated left to right and ordered with
 respect to $\preceq$.
\end{lem}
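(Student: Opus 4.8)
The plan is to reduce the statement to a more flexible claim about term operations that vanish on a union of coordinate hyperplanes $\{x_j=z\}$, prove that claim by induction on the number of ``free'' coordinates using left subtraction in the loop $(A,+_z,z)$, and finally rearrange the resulting sum into $\preceq$-order; that last rearrangement is the only genuine difficulty. Two preliminaries are needed. First, writing $a+_zb:=m(a,z,b)$, the element $z$ is a two-sided unit and ``left subtraction'' is a term operation: since $m'(x,y,w):=m(w,y,x)$ is again a Mal'cev term for $\A$, Lemma~\ref{le:translate} applied to $m'$ yields $d\in\Clo_3(\A)$ with $m(c,z,d(q,z,c))=q$ and $d(m(c,z,q),z,c)=q$ for all $c,q\in A$; in particular $d(c,z,c)=z$, and $d(q,z,c)$ is the unique solution $u$ of $m(c,z,u)=q$. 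Second, for $m\in\N$, $J\subseteq\{1,\dots,m\}$ and $t\in\Clo_{m+1}(\A)$, call $t$ \emph{$J$-vanishing} if $t(x_1,\dots,x_m,z)=z$ whenever $x_j=z$ for some $j\in J$. The claim is:

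\begin{quote}
If $t\in\Clo_{m+1}(\A)$ is $J$-vanishing, then there are commutators $c_S\in\Clo_{|S|+1}(\A)$, one for each $S$ with $J\subseteq S\subseteq\{1,\dots,m\}$, with $c_S$ depending only on $x_S$ and $z$, such that $t(x,z)=\sum_{J\subseteq S\subseteq\{1,\dots,m\}}c_S(x_S,z)$ for a suitable order of the summands, left-associated.
\end{quote}

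I prove the claim by induction on the number $|\{1,\dots,m\}\setminus J|$ of free coordinates. If there are none, then $J=\{1,\dots,m\}$ and the $J$-vanishing condition says exactly that $t(x_1,\dots,x_m,z)=z$ whenever some $x_j$ equals $z$; thus $t$ itself is a commutator of arity $m+1=|J|+1$, and we take $c_{\{1,\dots,m\}}:=t$. Otherwise pick a free coordinate $i$ (so $i\notin J$). Let $t_0\in\Clo_{m+1}(\A)$ be $t$ with $x_i$ replaced by the variable $z$; then $t_0$ does not depend on $x_i$, is still $J$-vanishing, and has one fewer free coordinate, so by induction $t_0(x,z)=\sum_{J\subseteq S\subseteq\{1,\dots,m\}\setminus\{i\}}c_S(x_S,z)$ with each $c_S$ a commutator of arity $|S|+1$. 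Put $r(x,z):=d(t(x,z),z,t_0(x,z))\in\Clo_{m+1}(\A)$, so that $m(t_0(x,z),z,r(x,z))=t(x,z)$, i.e.\ $t=t_0+_zr$. If $x_i=z$ then $t=t_0$, hence $r=d(t_0,z,t_0)=z$; if $x_j=z$ for some $j\in J$ then $t=z=t_0$ (using $j\neq i$), hence again $r=z$. So $r$ is $(J\cup\{i\})$-vanishing and has one fewer free coordinate than $t$, whence by induction $r(x,z)=\sum_{J\cup\{i\}\subseteq S\subseteq\{1,\dots,m\}}c'_S(x_S,z)$ with commutators $c'_S$ of arity $|S|+1$. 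Every $S$ with $J\subseteq S\subseteq\{1,\dots,m\}$ either omits $i$, and is then accounted for by $t_0$, or contains $i$, and is then accounted for by $r$; substituting the two decompositions into $t=t_0+_zr$ proves the claim for $t$.

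Lemma~\ref{le:csum} now follows by applying the claim with $m:=k$ and $J:=\emptyset$ to $\hat f(x,z):=d(f(x,z),z,f(z,\dots,z,z))\in\Clo_{k+1}(\A)$: this $\hat f$ is vacuously $\emptyset$-vanishing and satisfies $m(f(z,\dots,z,z),z,\hat f(x,z))=f(x,z)$, i.e.\ $f(x,z)=f(z,\dots,z,z)+_z\hat f(x,z)$. The claim produces commutators $c_S$ for all $S\subseteq\kset$ (the one for $S=\emptyset$ being the unary map $z\mapsto\hat f(z,\dots,z,z)=z$, which is absorbed), each $c_S\in\Clo_{|S|+1}(\A)$ depending only on $x_S$ and $z$, with $\hat f(x,z)=\sum_{\emptyset\neq S\subseteq\kset}c_S(x_S,z)$, and hence $f(x,z)=f(z,\dots,z,z)+_z\sum_{\emptyset\neq S\subseteq\kset}c_S(x_S,z)$.

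The one remaining point, and the main obstacle, is to rearrange this sum into the order prescribed by $\preceq$: since $+_z$ is in general neither associative nor commutative, reassociating the sum or transposing two summands produces error terms. These are controlled by the standard commutator calculus of nilpotent Mal'cev algebras, as in the proof of \cite[Lemma 14.6]{FM:CTFC}: one inducts on the nilpotence class of $\A$, working modulo the centre $\zeta$, where by Lemma~\ref{le:c} the operation $+_{\bar z}$ behaves like module addition, so that all rearrangements are free in $\A/\zeta$; lifting back, a single reassociation or transposition involving the summands indexed by sets $S$ and $T$ changes the value only by a commutator for $S\cup T$ (it vanishes as soon as any coordinate of $S$ or of $T$ equals $z$), which is absorbed into the summand $c_{S\cup T}$ without increasing any arity, because merging two commutators for the same set $U$ via $+_z$ again yields a commutator for $U$ of arity $|U|+1$. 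Since each such correction has strictly larger commutator weight than the summands being moved, the process terminates, and one is left with the decomposition in exactly the required left-associated, $\preceq$-ordered form.
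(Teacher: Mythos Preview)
Your decomposition via $J$-vanishing functions is a clean idea, but there is a real gap, and it recurs in both the inductive claim and the reordering paragraph: you are implicitly using associativity of $+_z$, which fails. In the inductive step you write $t=t_0+_zr$ and substitute left-associated sums for $t_0$ and $r$; but if $r=(\cdots(c'_1+_zc'_2)+_z\cdots)+_zc'_q$ then $t_0+_zr$ is not the left-associated sum $(\cdots((t_0+_zc'_1)+_zc'_2)+_z\cdots)+_zc'_q$. So your recursion only produces a sum bracketed according to the recursion tree, not a left-associated one. The reordering paragraph fails for the same reason, compounded: transposing adjacent summands $c_S,c_T$ in a left-associated sum produces an error that depends on the entire prefix, hence on all of $x_1,\dots,x_k$, not only on $x_{S\cup T}$; it therefore cannot be ``absorbed into $c_{S\cup T}$ without increasing any arity.'' (Also, Lemma~\ref{le:c} does not make $+_{\bar z}$ abelian in $\A/\zeta$; it only says that elements lying in a \emph{central} class of $z$ commute and associate with everything under $+_z$.) Your termination claim is likewise unjustified, since for $S\subsetneq T$ one has $S\cup T=T$ and no strict increase in weight occurs.

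The paper's proof avoids all of this by threading the lower central series through the whole construction rather than postponing the rearrangement. One first shows (by induction on $k$) that any $e$ with $e\equiv z\pmod{(1,1]^n}$ and $e(z,\dots,z)=z$ decomposes modulo $(1,1]^{n+1}$ as a sum of commutators $d_S$ with each $d_S\equiv z\pmod{(1,1]^n}$. Since $(1,1]^n/(1,1]^{n+1}$ is central in $\A/(1,1]^{n+1}$, all these summands lie in a central class of $z$, and now Lemma~\ref{le:c} genuinely makes $+_z$ an abelian group operation on them modulo $(1,1]^{n+1}$; so the $\preceq$-order and left association come for free at that level. An outer induction on $n$ then appends the new $d_S$'s to the existing $c_S$'s and, again using centrality of each $d_S$, merges $c_S+_zd_S$ in the correct position. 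The centrality of the correction terms at every stage is exactly the ingredient your argument lacks.
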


\begin{proof}
 In the following, all additions refer to $+_z$,
 and all sums are associated and ordered as in the statement of the lemma.
 Let $1$ denote the total relation on $A$, let $0$ denote the equality relation on $A$.
 Let $(1,1]^0 := 1$ and $(1,1]^{n+1} := [1,(1,1]^n]$ for $n\in\N_0$.

 First we fix $n\in\N$ and consider an operation $e\in\Clo_{k+1}(\A)$ such that $\forall x\in A^k,z\in A$
\[ e(x,z) \equiv z \mod (1,1]^n \text{ and } e(z,\dots,z,z) = z. \]
 We claim that there exist commutators $d_S\in\Clo_{|S|+1}(\A)$, $S\subseteq \kset,S\neq\emptyset$,
 such that $\forall x\in A^k,z\in A$
\begin{equation} \label{eq:es}
 e(x,z) \equiv \sum_{S\subseteq \kset,S\neq\emptyset} d_S(x_S,z) \mod (1,1]^{n+1} \text{ and } d_S(x_S,z) \equiv z \mod (1,1]^n.
\end{equation}
 We prove this by induction on $k$.
 If $k = 1$, then $e$ itself is a commutator, and the assertion is trivially true.
 Assume $k > 1$ in the following.
 Let $e_o := e$, and for $j\in\kset$ define $e_j\in\Clo_{k+1}(\A)$ iteratively by
\[ \begin{array}{rl} &\!\!\!\!\!\!\!\!\!\!\!\!\!\!\!\!\!\!\!\!\!\!\!\!\!\!\!\!\!\! e_j(x_1,\dots,x_k,z)\\
 \quad\quad := &\!\!m( e_{j-1}(x_1,\dots,x_k,z),e_{j-1}(x_1,\dots,x_{j-1},z,x_{j+1},\dots,x_k,z),z). \end{array}\]
 Then $e_j(x,z) \equiv z \mod (1,1]^n$ for all $x\in A^k,z\in A$ and $e_j(x_1,\dots,x_k,z) = z$ whenever
 $x_l = z$ for some $l\leq j$. In particular, $e_{k}$ is a commutator.

 For $T\subseteq\kset$ define $\delta_T\colon A^{k+1}\to A^{k+1}$ by
\[ (\delta_T(x_1,\dots,x_k,z))_i := \begin{cases} x_i & \text{if } i\in T, \\
                                  z & \text{else}. \end{cases} \]
 For $a,z\in A$ write $-_z a := m(z,a,z)$. Note that $(1,1]^n/(1,1]^{n+1}$ is central in $\A/(1,1]^{n+1}$.
 Hence, for every $z\in A$, the operations $+_z,-_z$, and $z$ induce addition, inverse, and zero element
 of an abelian group on $\{a/(1,1]^{n+1}\setsuchthat a\equiv z \mod (1,1]^n\}$ by Lemma~\ref{le:c}.
 From the definitions and Lemma~\ref{le:c} it is straightforward that $\forall x\in A^k,z\in A$
\[ e_{k}(x,z) \equiv \sum_{T\subseteq\kset} (-1)^{k-|T|} e\left(\delta_T(x,z)\right) \mod (1,1]^{n+1} \]
 with $-$ referring to $-_z$ and the order of the sum irrelevant because the summands commute ($\!\!\!\!\mod$ $ (1,1]^{n+1})$ with
 respect to $+_z$.
 Since $e\circ\delta_{\kset} = e$, we obtain $\forall x\in A^k,z\in A$
\begin{equation} \label{eq:es2}
 e(x,z) \equiv e_{k}(x,z) + \sum_{T\subset\kset} (-1)^{k+1-|T|} e\left(\delta_T(x,z)\right) \mod (1,1]^{n+1}.
\end{equation}
 Let $T\subset\kset$. Then $e\circ\delta_T$ does not depend on $x_i$ for $i\in\kset\setminus T$.
 From the induction assumption for $k-1$ it follows that we have commutators
 $d^{T}_S\in\Clo_{|S|+1}(\A)$, for $S\subseteq T,S\neq\emptyset$, such that $\forall x\in A^k,z\in A$
\begin{equation*}
 e\left(\delta_T(x,z)\right) \equiv \sum_{S\subseteq T,S\neq\emptyset} d^{T}_S(x_S,z) \!\mod (1,1]^{n+1} \text{, } d^{T}_S(x_S,z) \equiv z \!\mod (1,1]^n.
\end{equation*}
 Hence for every $x\in A^k, z\in A$ we obtain
\begin{eqnarray*} 
 & &\sum_{T\subset\kset} (-1)^{k+1-|T|} e\left(\delta_T(x,z)\right)  \\& \equiv & \sum_{T\subset\kset} (-1)^{k+1-|T|} \sum_{S\subseteq T,S\neq\emptyset} d^{T}_S(x_S,z) \mod (1,1]^{n+1} \\
 & \equiv & \sum_{S\subset\kset,S\neq\emptyset} \underbrace{\sum_{S\subseteq T\subset \kset}  (-1)^{k+1-|T|} d^{T}_S(x_S,z)}_{=: d_S(x_S,z)} \mod (1,1]^{n+1}.
\end{eqnarray*}
 We observe that $d_S\in\Clo_{|S|}(\A)$ is a commutator because $d^{T}_S$ is a commutator for every
 $S\subseteq T\subset\kset$ and that $d_S(x_S,z)\equiv z \mod (1,1]^n$ for every $x\in A^k, z\in A$.
 Finally~\eqref{eq:es2} yields
\[ e(x,z) \equiv \underbrace{e_k(x,z)}_{=: d_{\kset}(x,z)} +\sum_{S\subset\kset,S\neq\emptyset} d_S(x_S,z) \mod (1,1]^{n+1} \]
 which proves~\eqref{eq:es}.

 Next we consider an arbitrary operation $f\in\Clo_{k+1}(\A)$. Let $n\in\N_0$.
 We claim that we have commutators $c_S\in\Clo_{|S|+1}(\A)$ for $S\subseteq \kset,S\neq\emptyset$,
 such that $\forall x\in A^k,z\in A$
\begin{equation} \label{eq:fmod1n}
 f(x,z) \equiv f(z,\dots,z,z) + \sum_{S\subseteq \kset,S\neq\emptyset} c_S(x_S,z) \mod (1,1]^n.
\end{equation}
 For the proof we use induction on $n$. For $n = 0$ the statement is trivially true.
 So assume we have~\eqref{eq:fmod1n} for some fixed $n \ge 0$.
 By Lemma~\ref{le:translate} there exists $e\in\Clo_{k+1}(\A)$ such that $\forall x\in A^k,z\in A$
\begin{equation} \label{it:e1}
 f(x,z) = f(z,\dots,z,z) + \sum_{S\subseteq \kset,S\neq\emptyset} c_S(x_S,z) + e(x,z)
\end{equation}
 as well as $e(x,z) \equiv z \mod (1,1]^n$ and $e(z,\dots,z,z) = z$.
 Now $e$ can be written as a sum of commutators $d_S$ modulo $(1,1]^{n+1}$ as in~\eqref{eq:es}.
 Then~\eqref{eq:es} and~\eqref{it:e1} yield
\[ f(x,z) \equiv f(z,\dots,z,z) + \!\sum_{S\subseteq \kset,S\neq\emptyset} \! c_S(x_S,z) + \!\sum_{S\subseteq \kset,S\neq\emptyset} \! d_S(x_S,z) \mod (1,1]^{n+1}. \]
 From Lemma~\ref{le:c} we obtain
\[ f(x,z) \equiv f(z,\dots,z,z) + \sum_{S\subseteq \kset,S\neq\emptyset} (c_S(x_S,z) + d_S(x_S,z)) \mod (1,1]^{n+1}. \]
 Since $c_S+d_S$ is a commutator for every $S$, the induction step is proved.
 Thus we can represent $f$ as in~\eqref{eq:fmod1n} for every $n\in\N_0$.
 For $N$ such that $(1,1]^N = 0$ this yields the lemma.
\end{proof}

 We conclude the section with three observations on commutator polynomials and higher commutators of congruences.

\begin{lem} \label{le:c2}
 Let $\A$ be an algebra in a congruence modular variety, let $k\geq 2$, and let $c\in\Clo_{k+1}(\A)$
 be a commutator. Let $\alpha,\beta$ be congruences of $\A$, and let $a_1,\dots,a_k,\o\in A$ such that
 $a_1\equiv_\alpha \o$, $a_2\equiv_\beta \o$. Then
\[ c(a_1,\dots,a_k,\o) \equiv_{[\alpha,\beta]} \o. \]
\end{lem}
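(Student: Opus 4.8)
The plan is to reduce the statement to the defining term condition of the binary commutator. Set $\nu:=[\alpha,\beta]$; by definition (the higher commutator with two arguments being the ordinary term condition commutator of \cite{FM:CTFC}), $\nu$ is the smallest congruence of $\A$ such that $\alpha$ centralizes $\beta$ modulo $\nu$.

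First I would freeze the parameters $a_3,\dots,a_k,\o$ and introduce the binary polynomial operation $g$ of $\A$ given by
\[ g\colon A^2\to A,\qquad g(x,y):=c(x,y,a_3,\dots,a_k,\o). \]
Since $c$ is a commutator we have $c(x_1,\dots,x_k,z)=z$ whenever $z\in\{x_1,\dots,x_k\}$; applying this with $z=\o$ gives $g(\o,y)=\o$ and $g(x,\o)=\o$ for all $x,y\in A$. In particular $g(\o,\o)=\o=g(\o,a_2)$, so the congruence $g(\o,\o)\equiv_\nu g(\o,a_2)$ holds trivially.

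Next I would apply the centralizing property of $\alpha$ on $\beta$ modulo $\nu$ to the polynomial $g$, placing the $\alpha$-variable in the first coordinate and the $\beta$-variable in the second. Concretely, take the one-element tuples $\bar{a}_1:=\o$, $\bar{b}_1:=a_1$ (which are $\alpha$-related since $a_1\equiv_\alpha\o$) and $\bar{a}_2:=\o$, $\bar{b}_2:=a_2$ (which are $\beta$-related since $a_2\equiv_\beta\o$). For $k=2$ the premise of the term condition asks only for $g(\bar{a}_1,\bar{a}_2)\equiv_\nu g(\bar{a}_1,\bar{b}_2)$, i.e. $g(\o,\o)\equiv_\nu g(\o,a_2)$, which we have just checked. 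The term condition then yields $g(\bar{b}_1,\bar{a}_2)\equiv_\nu g(\bar{b}_1,\bar{b}_2)$, that is, $g(a_1,\o)\equiv_\nu g(a_1,a_2)$. Since $g(a_1,\o)=\o$, this gives
\[ c(a_1,\dots,a_k,\o)=g(a_1,a_2)\equiv_\nu\o, \]
as claimed.

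I do not expect a real obstacle. The two things worth double-checking are that the hypothesis $k\ge 2$ is exactly what provides two distinct argument slots of $c$ for the $\alpha$- and $\beta$-variables, and that the slightly asymmetric wording of the term condition in the definition---where among the tuples in $\{\bar{a}_1,\bar{b}_1\}$ only $\bar{a}_1$ must be tested---matches the single instance used here. Everything else is immediate from the definitions of a commutator polynomial and of $[\alpha,\beta]$.
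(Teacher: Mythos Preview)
Your argument is correct. Both proofs boil down to the term condition for $[\alpha,\beta]$, but the presentations differ: the paper packages the argument in the Freese--McKenzie $M_{\A}(\alpha,\beta)$ formalism, building the matrix
\[
c\left(\begin{pmatrix}\o&a_1\\\o&a_1\end{pmatrix},\begin{pmatrix}\o&\o\\a_2&a_2\end{pmatrix},\begin{pmatrix}a_3&a_3\\a_3&a_3\end{pmatrix},\dots,\begin{pmatrix}\o&\o\\\o&\o\end{pmatrix}\right)=\begin{pmatrix}\o&\o\\\o&c(a_1,\dots,a_k,\o)\end{pmatrix}\in M_{\A}(\alpha,\beta)
\]
and reading off the conclusion from the defining property of $[\alpha,\beta]$ in terms of $M_{\A}(\alpha,\beta)$. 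You instead freeze the last $k-1$ arguments to obtain the binary polynomial $g(x,y)=c(x,y,a_3,\dots,a_k,\o)$ and apply the centralizing condition from the paper's own Definition directly. Your route is slightly more self-contained, since it stays entirely within the definitions given in Section~\ref{sec:nilpotent} and does not invoke the $M_{\A}(\alpha,\beta)$ machinery from \cite{FM:CTFC}; the paper's route has the advantage of displaying at a glance how all $k+1$ arguments of $c$ are used, and ties the computation to a standard object in commutator theory.
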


\begin{proof}
 Consider $M_\A(\alpha,\beta)$, the subuniverse of $\A^{2\times 2}$ that is generated by all the elements
\[ \mtt{a}{b}{a}{b}, \mtt{e}{e}{f}{f} \text{ for } a,b,e,f\in A \text{ with } a\equiv_\alpha b, e\equiv_\beta f. \]
 Then
\[ c\left( \mtt{\o}{a_1}{\o}{a_1},\mtt{\o}{\o}{a_2}{a_2}, \mtt{a_3}{a_3}{a_3}{a_3}, \dots, \mtt{a_k}{a_k}{a_k}{a_k},\mtt{\o}{\o}{\o}{\o}\right) \]
\[= \mtt{\o}{\o}{\o}{c(a_1,\dots,a_k,\o)} \]
 is contained in $M_\A(\alpha,\beta)$ and the result follows from the definition of the term condition commutator.
\end{proof}

\begin{lem} \label{le:hcgen}
 Let $\A$ be a nilpotent algebra generating a congruence modular variety, let $k\in\N$, and
 let $\alpha_1,\dots,\alpha_k$ be congruences of $\A$.
 Then $[\alpha_1,\dots,\alpha_k]$ is the congruence of $\A$ that is generated by
\begin{eqnarray*}
 T := \{ (c(a_1,\dots,a_k,\o), \o) & : & c\in\Pol_{k+1}(\A), c \text{ is a commutator}, \\
 & & (a_1,\o)\in\alpha_1,\dots,(a_k,\o)\in\alpha_k \}.
\end{eqnarray*}
\end{lem}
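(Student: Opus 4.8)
The plan is to show that the congruence $\theta$ generated by the set $T$ equals $[\alpha_1,\dots,\alpha_k]$ by proving two inclusions. For $\theta \subseteq [\alpha_1,\dots,\alpha_k]$ I would generalize Lemma~\ref{le:c2} from the binary to the $k$-ary case: given a commutator $c \in \Pol_{k+1}(\A)$ and $(a_i,\o) \in \alpha_i$ for $i \in \kset$, one works inside the subalgebra $M_\A(\alpha_1,\dots,\alpha_k)$ of $\A^{2^k}$ generated by the appropriate ``$\alpha_i$-matrices'' (the natural higher-arity analogue of the $M_\A(\alpha,\beta)$ used above), and evaluates $c$ at the $2^k$-dimensional cubes that are constant in all but the $i$-th coordinate direction for the first $k-1$ arguments and constant everywhere for the last. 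By the commutator identity $z \in \{x_1,\dots,x_k\} \Rightarrow c(\bar x,z)=z$, all coordinates of the result collapse to $\o$ except the one indexed by the ``full'' subset, which is $c(a_1,\dots,a_k,\o)$; the definition of the higher commutator $[\alpha_1,\dots,\alpha_k]$ (that $\alpha_1,\dots,\alpha_{k-1}$ centralize $\alpha_k$ modulo it) then forces $(c(a_1,\dots,a_k,\o),\o) \in [\alpha_1,\dots,\alpha_k]$. Since $[\alpha_1,\dots,\alpha_k]$ is a congruence containing every generator of $\theta$, we get $\theta \subseteq [\alpha_1,\dots,\alpha_k]$.

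For the reverse inclusion $[\alpha_1,\dots,\alpha_k] \subseteq \theta$ it suffices, by minimality of the higher commutator, to check that $\alpha_1,\dots,\alpha_{k-1}$ centralize $\alpha_k$ modulo $\theta$. So take an arbitrary polynomial $f(\bar x_1,\dots,\bar x_k)$ and tuples $\bar a_i \equiv_{\alpha_i} \bar b_i$ satisfying the term-condition hypotheses (2) modulo $\theta$; we must derive $f(\bar b_1,\dots,\bar b_{k-1},\bar a_k) \equiv_\theta f(\bar b_1,\dots,\bar b_{k-1},\bar b_k)$. The key device is Lemma~\ref{le:csum}: working componentwise, each polynomial $g(\bar x_k) := f(\bar b_1,\dots,\bar b_{k-1},\bar x_k)$ and more generally each $g_{(\bar u_1,\dots,\bar u_{k-1})}(\bar x_k) := f(\bar u_1,\dots,\bar u_{k-1},\bar x_k)$ with $\bar u_j \in \{\bar a_j,\bar b_j\}$ can be expanded, via repeated applications of Lemma~\ref{le:csum} treating one block of variables at a time, as a $+_z$-sum of commutator terms $c_S$ evaluated at the sub-tuples indexed by subsets $S$; absorbing constants, one arranges that $f(\bar x_1,\dots,\bar x_k)$ itself is, modulo re-basing by the Mal'cev operation, a sum $\sum_{\emptyset \neq S \subseteq \kset} c_S((\bar x_i)_{i \in S}, z)$ of commutators (plus a constant term), the sum taken with $+_z$ for a convenient fixed base point $z$.

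Once $f$ is in this commutator-sum normal form, changing $\bar a_k$ to $\bar b_k$ changes only those summands $c_S$ with $k \in S$. For each such $S$ with $k \in S$ and $|S| \geq 2$, the difference $c_S((\bar u_i)_{i \in S \setminus \{k\}}, \bar a_k, z)$ versus $c_S(\dots,\bar b_k,z)$ lies in $\theta$ directly by the definition of $T$, since $c_S$ is a commutator polynomial and $\bar a_k \equiv_{\alpha_k} \bar b_k$ (after translating the relevant Mal'cev-difference of these two evaluations into the form $c_S$ applied at an $\alpha_k$-related pair and a base point). The remaining case is $S = \{k\}$: here $c_{\{k\}}(\bar x_k,z)$ is a \emph{unary} (in the block $\bar x_k$) commitant, and one must show its contribution also changes by an element of $\theta$ — this is where the term-condition hypotheses (2) on $f$ modulo $\theta$ are invoked, comparing the expansions of $f$ at the $2^{k-1}$ choices of $(\bar u_1,\dots,\bar u_{k-1})$ and solving for the $S=\{k\}$ part. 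Using that $\theta$-classes form abelian groups under $+_z$ in the relevant nilpotent quotient (Lemma~\ref{le:c}), all the congruences combine additively and yield $f(\bar b_1,\dots,\bar b_{k-1},\bar a_k) \equiv_\theta f(\bar b_1,\dots,\bar b_{k-1},\bar b_k)$, as required.

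The main obstacle I expect is bookkeeping in the second inclusion: carefully managing the non-commutative, left-associated $+_z$-sums from Lemma~\ref{le:csum} across the $2^{k-1}$ substitution patterns, and in particular isolating the $S=\{k\}$ term cleanly enough to apply hypothesis (2). Everything else — the higher-arity version of Lemma~\ref{le:c2}, and the additivity of $\theta$-classes — is a routine lift of the binary case, but the linearization argument identifying exactly which commutator summands move and showing each moves only within $\theta$ requires care with the ordering and with re-basing by $m$.
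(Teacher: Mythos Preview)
Your first inclusion $\theta \subseteq [\alpha_1,\dots,\alpha_k]$ via a higher-arity analogue of Lemma~\ref{le:c2} is fine. The reverse inclusion, however, takes a much harder route than the paper does, and the sketch has a real gap.

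The paper does not verify the centralizing condition modulo $\theta$ at all. Instead it quotes \cite[Lemma~6.9]{AM:SAHC}, which already says that $[\alpha_1,\dots,\alpha_k]$ is the congruence generated by
\[
R := \{\, (f(b_1,\dots,b_k), f(o_1,\dots,o_k)) : f\in\Pol_k(\A)\text{ absorbing at }(o_1,\dots,o_k),\ (b_i,o_i)\in\alpha_i \,\}.
\]
It then proves $T=R$ in a few lines: for $R\subseteq T$, given an absorbing $f$ one sets $c(x_1,\dots,x_k,z) := m\bigl(f(m(x_1,z,o_1),\dots,m(x_k,z,o_k)),o,z\bigr)$, checks that this is a commutator, and uses Lemma~\ref{le:translate} to solve $m(a_i,o,o_i)=b_i$ with $(a_i,o)\in\alpha_i$. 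The inclusion $T\subseteq R$ is immediate since $x\mapsto c(x,\o)$ is absorbing at $(\o,\dots,\o)$. No Lemma~\ref{le:csum}, no sums, no bookkeeping.

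In your approach the step ``for $S$ with $k\in S$ and $|S|\ge 2$, the difference lies in $\theta$ directly by the definition of $T$'' is not direct, and this is where the argument breaks. Generators of $T$ are $(k{+}1)$-ary \emph{single-variable} commutators evaluated at witnesses $(a_i,\o)\in\alpha_i$ for \emph{every} $i\in\kset$. Your block-commutator $c_S$, by contrast, involves only the congruences $\alpha_i$ with $i\in S$. When $|S|<k$ you must manufacture absorbing dependence on variables for the indices $j\notin S$ together with witnesses $(a_j,\o)\in\alpha_j$, and nothing in the term-condition data supplies these; even when $|S|=k$, passing from a polynomial that is block-absorbing on tuples to an element of $T$ requires exactly the Mal'cev translation trick that is the whole content of the paper's $T=R$ argument. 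You also invoke a block version of Lemma~\ref{le:csum} that is not stated in the paper and would need its own proof. So the route you propose is substantially heavier, and at its crucial step still needs the paper's idea to close.
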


\begin{proof}
 Let $(\o_1,\dots,\o_k)\in A^k$. As in Definition 4.9 of~\cite{AM:SAHC} we say that $f\colon A^k\to A$ is
 absorbing at $(\o_1,\dots,\o_k)$ if for all $(x_1,\dots,x_k)\in A^k\colon$
\[ (\exists i\in\kset\colon x_i = \o_i) \Rightarrow f(x_1,\dots,x_k) = f(\o_1,\dots,\o_k). \]
 By~\cite[Lemma 6.9]{AM:SAHC} $[\alpha_1,\dots,\alpha_k]$ is the congruence of $\A$ that is generated by
\begin{eqnarray*}
 R := \{ (f(b_1,\dots,b_k), f(\o_1,\dots,\o_k)) & : & f\in\Pol_{k}(\A), f \text{ is absorbing at } (\o_1,\dots,\o_k), \\
 & & (b_1,\o_1)\in\alpha_1,\dots,(b_k,\o_k)\in\alpha_k \}.
\end{eqnarray*}
 Hence it suffices to prove
\begin{equation} \label{eq:TR}
 T = R.
\end{equation}
 For the inclusion $\subseteq$ let $(a,\o)\in T$.
 Then we have a commutator $c$ in $\Pol_{k+1}(\A)$, and $(a_1,\o)\in\alpha_1,\dots,(a_k,\o)\in\alpha_k$
 such that $(c(a_1,\dots,a_k,\o), \o) = (a,\o)$.

 Consider $f\colon A^k\to A, (x_1,\dots,x_k) \mapsto c(x_1,\dots,x_k,\o)$. Then $f$ is a $k$-ary polynomial
 function on $\A$ that is absorbing at $(\o,\dots,\o)$. Since $(f(a_1,\dots,a_k),f(\o,\dots,\o)) = (a,\o)$,
 we obtain $(a,\o)\in R$.

 For the converse inclusion $\supseteq$ in~\eqref{eq:TR}, let $(a,\o)\in R$.
 Then we have $(b_1,\o_1)\in\alpha_1,\dots,(b_k,\o_k)\in\alpha_k$ and some $f\in\Pol_{k}(\A)$ that is absorbing at
 $(\o_1,\dots,\o_k)$ such that $(f(b_1,\dots,b_k), f(\o_1,\dots,\o_k)) = (a,\o)$.
 Since $\A$ is nilpotent, it has a Mal'cev term operation $m$ by~\cite[Theorem 6.2]{FM:CTFC}.
 Define a $(k+1)$-ary polynomial operation $c$ on $\A$ by
\[ c(x_1,\dots,x_k,z) := m(f(m(x_1,z,\o_1),\dots,m(x_k,z,\o_k)),\o,z). \]
Note that $c$ is a commutator.
 Let $i\in\kset$. By Lemma~\ref{le:translate}, we have $a_i\in A$ such that $m(a_i,\o,\o_i) = b_i$.
 Note that $\o_i \equiv_{\alpha_i} b_i$ implies $a_i \equiv_{\alpha_i} \o$.
 Now $c(a_1,\dots,a_k,\o) = f(b_1,\dots,b_k)$ and $(a,\o)\in T$.
\end{proof}

 Let $m$ be a Mal'cev term operation on an algebra $\A$.
 Under certain conditions commutator terms on $\A$ turn out to be multilinear and
 alternating with respect to the operation $a+_zb := m(a,z,b)$.

\begin{lem}  \label{le:ma}
 Let $\A$ be a nilpotent algebra with Mal'cev term operation $m$, let $k\in\N$, let $c\in\Clo_{k+1}(\A)$
 be a commutator, and let $\alpha\in\Con(\A)$ be $k$-supernilpotent.
\begin{enumerate}
\item  \label{it:multi}
 Then $\forall x_1,\dots,x_k,y,z\in A$ with $(x_1,z),\dots,(x_k,z),(y,z)\in\alpha\colon$
\[ c(x_1,\dots,x_i+_z y,\dots,x_k,z) = c(x_1,\dots,x_i,\dots,x_k,z)+_z
c(x_1,\dots,\underset{i}{y},\dots,x_k,z) \]
\vspace{-10pt}

\hspace*{\fill}$\text{ (multilinearity)}$.
\item  \label{it:alt}
 Let $i,j\in\{1,\dots,k\}$ be distinct. Assume that
 $\forall x_1,\dots,x_k,z\in A$ with $(x_1,z),\dots,(x_k,z)\in\alpha\colon$
\[ x_i = x_j \Rightarrow c(x_1,\dots,x_k,z) = z. \]
 Then $\forall x_1,\dots,x_k,z\in A$ with $(x_1,z),\dots,(x_k,z)\in\alpha\colon$
\[ c(x_1,\dots,x_i,\dots, x_j,\dots,x_k,z) +_z c(x_1,\dots,\underset{i}{x_j},\dots,\underset{j}{x_i},\dots,x_k,z) = z \]
\vspace{-10pt}

\hspace*{\fill}$\text{ (alternating)}.$
\end{enumerate}
\end{lem}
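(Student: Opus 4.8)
The plan is to prove the multilinearity part~\eqref{it:multi} first and then to deduce the alternating part~\eqref{it:alt} from it by the usual argument that a multilinear alternating map is antisymmetric. Throughout, one must remember that $a+_zb := m(a,z,b)$ is a priori neither commutative nor associative; the only simplifications of $+_z$-sums available are the identities $z+_zw = w = w+_zz$, which hold because $m$ is a Mal'cev term.

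For~\eqref{it:multi}: since permuting the first $k$ arguments of a commutator term yields another commutator term, we may assume $i=k$. Regard
\[ g(x_1,\dots,x_{k-1},x_k,y,z) := c\bigl(x_1,\dots,x_{k-1},m(x_k,z,y),z\bigr) \]
as a $(k+2)$-ary term operation and apply Lemma~\ref{le:csum} to it, with $k+1$ in the role of $k$ and with a linear order $\preceq$ on $\mathcal P(\{1,\dots,k+1\})$ chosen to refine the cardinality order and to put $\{1,\dots,k\}$ before $\{1,\dots,k-1,k+1\}$ (here indices $1,\dots,k-1,k,k+1$ correspond to $x_1,\dots,x_{k-1},x_k,y$; the finiteness hypothesis of Lemma~\ref{le:csum} is used there only to secure $(1,1]^N=0$, which holds for every nilpotent algebra). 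Since $g(z,\dots,z) = c(z,\dots,z,z) = z$, this writes $g$ as a $\preceq$-ordered, left-associated $+_z$-sum of commutators $c_S$, $\emptyset\ne S\subseteq\{1,\dots,k+1\}$. If some $j_0\in\{1,\dots,k-1\}$ lies outside $S$, then putting $x_{j_0}=z$ makes $g$ and every $c_{S'}$ with $j_0\in S'$ equal to $z$, and a short induction on $|S|$ over the subsets avoiding $j_0$ — isolating each $c_S$ by setting every variable with index outside $S$ equal to $z$ — forces $c_S\equiv z$. Hence only $c_{S_0}$, $c_{S_x}$, $c_{S_y}$, $c_{S_{xy}}$ can be nontrivial, where $S_0=\{1,\dots,k-1\}$, $S_x=\{1,\dots,k\}$, $S_y=\{1,\dots,k-1,k+1\}$ and $S_{xy}=\{1,\dots,k+1\}$.

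The decisive observation is that $c_{S_{xy}}\in\Clo_{k+2}(\A)$ is a commutator with $k+1$ non-final arguments, and that $\alpha$ being $k$-supernilpotent means $[\alpha,\dots,\alpha]=0$ with $k+1$ copies of $\alpha$; hence Lemma~\ref{le:hcgen}, applied with $k+1$ in place of $k$ and all congruences equal to $\alpha$, gives $c_{S_{xy}}(x_1,\dots,x_{k-1},x_k,y,z)=z$ whenever $x_1,\dots,x_{k-1},x_k,y$ are all $\alpha$-related to $z$. On that domain, $g$ therefore collapses to $\bigl(c_{S_0}(x_1,\dots,x_{k-1},z)+_zc_{S_x}(x_1,\dots,x_{k-1},x_k,z)\bigr)+_zc_{S_y}(x_1,\dots,x_{k-1},y,z)$. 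Putting $y=z$ and using $m(x_k,z,z)=x_k$ shows $c(x_1,\dots,x_{k-1},x_k,z) = c_{S_0}(x_1,\dots,x_{k-1},z)+_zc_{S_x}(x_1,\dots,x_{k-1},x_k,z)$; then putting $x_k=z$ forces $c_{S_0}\equiv z$, whence $c(x_1,\dots,x_{k-1},x_k,z)=c_{S_x}(x_1,\dots,x_{k-1},x_k,z)$, and similarly $c(x_1,\dots,x_{k-1},y,z)=c_{S_y}(x_1,\dots,x_{k-1},y,z)$. Putting these together, $c(x_1,\dots,x_{k-1},x_k+_zy,z)=g=c_{S_x}(\dots,x_k,z)+_zc_{S_y}(\dots,y,z)=c(\dots,x_k,z)+_zc(\dots,y,z)$, which is~\eqref{it:multi}.

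For~\eqref{it:alt}: applying the alternating hypothesis with both the $i$-th and the $j$-th argument equal to $x_i+_zx_j$ (which again lies in the $\alpha$-class of $z$) gives $c(\dots,\underset{i}{x_i+_zx_j},\dots,\underset{j}{x_i+_zx_j},\dots,z)=z$; expanding by~\eqref{it:multi} in coordinate $i$ and then, inside each summand, in coordinate $j$, produces four terms, of which the two with equal $i$-th and $j$-th entries vanish by hypothesis, and $z+_zw=w=w+_zz$ reduces the identity to $c(\dots,\underset{i}{x_i},\dots,\underset{j}{x_j},\dots,z)+_zc(\dots,\underset{i}{x_j},\dots,\underset{j}{x_i},\dots,z)=z$. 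I expect the main obstacle to be the bookkeeping in~\eqref{it:multi} — identifying exactly which $c_S$ survive, respecting the fixed order $\preceq$, and justifying every rearrangement only from the vanishing of the discarded summands and the identity laws of $m$, since $+_z$ is not known to be commutative or associative. The one genuinely new input beyond Lemma~\ref{le:csum} is that the sole obstruction to linearity in a coordinate is the ``bilinear'' commutator $c_{S_{xy}}$, whose arity $k+2$ is exactly what the $k$-supernilpotence of $\alpha$ annihilates.
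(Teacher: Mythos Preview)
Your proof is correct. For part~\eqref{it:alt} you do exactly what the paper does: expand $c(\dots,x_i+_zx_j,\dots,x_i+_zx_j,\dots)$ in both slots by multilinearity and cancel the two diagonal terms using the hypothesis and the Mal'cev identities.

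For part~\eqref{it:multi}, however, you take a genuinely different route. The paper's argument is a two-line trick: it defines
\[
d(x_1,\dots,x_k,y,z) := m\bigl(c(\dots,x_i+_zy,\dots,z),\ c(\dots,x_i,\dots,z)+_zc(\dots,\underset{i}{y},\dots,z),\ z\bigr),
\]
checks directly that $d$ is a commutator in $\Clo_{k+2}(\A)$, kills it on the $\alpha$-class via Lemma~\ref{le:hcgen}, and then invokes the bijectivity of translations (Lemma~\ref{le:translate}) to conclude. You instead feed $g=c(\dots,x_k+_zy,\dots)$ into the full decomposition of Lemma~\ref{le:csum}, prune all but the four summands $c_{S_0},c_{S_x},c_{S_y},c_{S_{xy}}$ by substitution, and kill $c_{S_{xy}}$ by supernilpotence. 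Both proofs hinge on the same fact---a $(k+2)$-ary commutator vanishes on the $\alpha$-class---but the paper constructs that commutator by hand, tailored to the identity, whereas you let the general machinery locate it. Your approach is more systematic and does not require spotting the clever auxiliary $d$, at the cost of more bookkeeping with the non-associative sum and an appeal to the comparatively heavy Lemma~\ref{le:csum} (which, as you correctly note, does not actually need finiteness). The paper's approach is shorter, avoids Lemma~\ref{le:csum} entirely, and uses Lemma~\ref{le:translate} in its place.
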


\begin{proof}
For simplicity, we will write $+$ instead of $+_z$ throughout the proof.
 For~\eqref{it:multi} define \vspace{3pt}

\noindent $d(x_1,\dots,x_k,y,z)$ \vspace{-3pt}
\[:=  m\left(c(x_1,\dots,x_i+ y,\dots,x_k,z),c(x_1,\dots,x_i,\dots,x_k,z)+ c(x_1,\dots,\underset{i}{y},\dots,x_k,z),z\right). \]
 Then $d$ is a commutator.
 Let $x_1,\dots,x_k,y,z\in A$ be fixed such that $(x_1,z),\dots,$ $(x_{k},z), (y,z)\in\alpha$.
Since
$[\underbrace{\alpha,\dots,\alpha}_{k+1}] = 0$, Lemma~\ref{le:hcgen} yields
 $d(x_1,\dots,x_k,y,z) = z$.
 Since $$m\left(c(\dots,x_i,\dots)+ c(\dots,\underset{i}{y},\dots),c(\dots,x_i,\dots)+c(\dots,\underset{i}{y},\dots),z\right) = z,$$
 Lemma~\ref{le:translate} yields $c(\dots,x_i+y,\dots) = c(\dots,x_i,\dots)+c(\dots,y,\dots)$.

 For~\eqref{it:alt}
 we note that~\eqref{it:multi} implies
\begin{eqnarray*}
 c(\dots,x_i+x_j,\dots, x_i+x_j,\dots) & = & c(\dots,x_i,\dots, x_i,\dots)+c(\dots,x_i,\dots, x_j,\dots) \\
                                      & & +c(\dots,x_j,\dots, x_i,\dots)+c(\dots,x_j,\dots, x_j,\dots).
\end{eqnarray*}
\end{proof}

\section{Proof of Theorem~\ref{th:nd}} \label{sec:proof}

 This section consists almost entirely of the proof of the following result which will then yield
 Theorem~\ref{th:nd} in the end.

\begin{thm}  \label{th:sn}
 Let $\A$ be a finite nilpotent algebra in a congruence modular variety.
 Assume that $\A$ has a non-abelian $k$-supernilpotent congruence $\alpha$ (such that
 $[\underbrace{\alpha,\dots,\alpha}_{k+1}] = 0$) for some $k\geq 2$.
 Then $\A$ is inherently non-dualizable, i.e. every finite superalgebra $\B$ of $\A$ is
non-dualizable.
\end{thm}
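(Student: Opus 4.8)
The plan is to apply the standard sufficient condition for inherent non-dualizability --- the ``ghost element'' criterion --- which reduces the theorem to producing a concrete witness: a set $S$, a subalgebra $\C$ of a power $\A^S$, a chain of finite subsets $S_1\subseteq S_2\subseteq\cdots$ with $\bigcup_n S_n=S$, elements $\aa_n\in C$ agreeing with a ``ghost'' $g\in A^S\setminus C$ on $S_n$, and all of this assembled from polynomial operations of $\A$ in such a way that no homomorphism into $\A$ separates $g$ from its approximants over any single $S_n$. Since $g$, the $\aa_n$, and $\C$ will be built purely from $\Pol(\A)$ --- which is contained in $\Pol(\B)$ for every finite superalgebra $\B$ of $\A$ --- and since the relevant ``no separating homomorphism'' condition only gets easier when operations are added, the corresponding witness works for every such $\B$, giving the inherent conclusion. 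This is the strategy of Szab\'o's proof for finite nilpotent rings~\cite{Sz:FNR}, with the ring product replaced by the higher commutator that supernilpotence supplies.

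First I would normalize the commutator data. We may assume $k$ is least with $[\underbrace{\alpha,\dots,\alpha}_{k+1}]=0$; since $\alpha$ is non-abelian this still forces $k\geq 2$ and, by minimality, $[\underbrace{\alpha,\dots,\alpha}_{k}]\neq 0$. By Lemma~\ref{le:hcgen} there are a commutator polynomial $c\in\Pol_{k+1}(\A)$ and elements $a_1,\dots,a_k,\o\in A$ with $(a_i,\o)\in\alpha$ for all $i$ and $c(a_1,\dots,a_k,\o)\neq\o$. Everything below takes place inside the $\alpha$-class of $\o$, on which $+_\o:=m(\,\cdot\,,\o,\,\cdot\,)$ supplies the needed near-additive structure (Lemmas~\ref{le:c} and~\ref{le:translate}); although $\alpha$ need not be the full congruence, only the behaviour of $c$ on this class is used. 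Because $\alpha$ is $k$-supernilpotent, Lemma~\ref{le:ma}\eqref{it:multi} makes $(x_1,\dots,x_k)\mapsto c(x_1,\dots,x_k,\o)$ multilinear with respect to $+_\o$ on arguments $\equiv_\alpha\o$, and by Lemma~\ref{le:ma}\eqref{it:alt} (after the usual antisymmetrization, legitimate because these commutator values lie in a central layer) we may take it alternating; so $c(\,\cdot\,,\dots,\cdot\,,\o)$ behaves exactly like a $k$-linear alternating form that is nonzero in the direction $(a_1,\dots,a_k)$. Lemma~\ref{le:csum} is the other key tool: every term applied to arguments from this class is, modulo lower central layers, an ordered $+_\o$-sum of such commutator ``monomials'' of arities $\leq k+1$, which is what lets us control how any global expression looks on a finite set of coordinates.

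Next I would carry out the construction. For each $n$ choose a finite index set $S_n$ of evaluation points, let $\C_n\leq\A^{S_n}$ be generated by ``coordinate copies'' of $a_1,\dots,a_k$ spread over $n$ independent slots so that on $S_n$ one can read off every partial commutator monomial $c(a_{i_1},\dots,a_{i_k},\o)$ with $\{i_1<\cdots<i_k\}\subseteq\{1,\dots,n\}$, and add one further ``accumulator'' coordinate carrying the $+_\o$-sum of all of them. Amalgamate the $\C_n$ into $\C\leq\A^S$, $S:=\bigcup_n S_n$, and let $g\in A^S$ be the element whose restriction to each $S_n$ is exactly this accumulated sum. Then $g|_{S_n}=\aa_n|_{S_n}$ for a suitable $\aa_n\in C$, because over the finitely many coordinates of $S_n$ only the finitely many monomials indexed inside $\{1,\dots,n\}$ contribute (multilinearity together with Lemma~\ref{le:csum}), and each such sum is realized inside $\A$. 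On the other hand $g\notin C$: any element of $C$ is the value at the generators of a fixed term, hence by Lemma~\ref{le:csum} a bounded $+_\o$-combination of commutator monomials, and --- using that every composite of the available operations still has commutator rank $\leq k$ (the rank bound being precisely $k$-supernilpotence; cf.\ Lemmas~\ref{le:c2} and~\ref{le:hcgen}) together with the non-triviality $c(a_1,\dots,a_k,\o)\neq\o$ --- no such bounded combination can match $g$'s strictly growing accumulations across $S$. The same estimate shows that no homomorphism $\C\to\A$ separates $g$ from the $\aa_n$ over a single $S_n$. This gives the witness, and hence Theorem~\ref{th:sn}.

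The main obstacle is the simultaneous balancing of $\C$ and $g$: $\C$ must be rich enough that $g$ agrees locally with honest elements, which needs the multilinear/alternating normal form of Lemmas~\ref{le:csum} and~\ref{le:ma} so that ``locally'' involves only boundedly many monomials per $S_n$; yet $\C$ must be lean enough that $g$ is not globally producible, which needs the commutator-rank bound coming from $k$-supernilpotence rather than mere nilpotence --- this is exactly the point where the nilpotent but not supernilpotent expansion of $\algop{\Z_4}{+}$ of Section~\ref{sec:dualizable} escapes the argument. A secondary but essential check is the ``inherent'' strengthening: one must verify that adjoining operations to $\A$ can neither collapse the distinction $g\notin C$ nor create a separating homomorphism; this follows because the entire construction uses only $\Pol(\A)\subseteq\Pol(\B)$ and the obstruction is phrased via maps into $\A$, whose supply can only shrink under enrichment of the signature.
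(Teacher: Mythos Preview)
Your overall strategy---build a ghost via the commutator $c$ and the $+_\o$-structure on an $\alpha$-class, then invoke a ghost-element criterion---matches the paper's, but your sketch has genuine gaps that are not just missing details.

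\textbf{The ``growing accumulations'' argument does not work.} You propose to show $g\notin C$ by arguing that any element of $C$ is a \emph{bounded} $+_\o$-combination of commutator monomials, while $g$ carries \emph{strictly growing} sums. But $A$ is finite: the element $c(a_1,\dots,a_k,\o)$ has finite additive order in $\algop{\o/\gamma}{+_\o}$, so sums of copies of it are periodic, not unbounded. The paper resolves this with a \emph{parity} invariant rather than a growth argument: the generators are placed so that every element of $D$ congruent to $\ooo$ modulo $\gamma$ satisfies $\sum_{i,S}(\pm 1)w(i)(S)=\o$ (Lemma~\ref{le:parity}), while the ghost $(\dots,\oo,e,\oo,\dots)$ violates it. Producing such an invariant requires the specific two-layer index set $\Z\times\P(\kset)$ and the palindromic pattern of the $d_i$; nothing in your accumulator picture supplies a substitute.

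\textbf{Antisymmetrization need not be available.} You write ``after the usual antisymmetrization\dots we may take it alternating''. Lemma~\ref{le:ma}\eqref{it:alt} gives alternation only under the hypothesis that $c$ already vanishes whenever two $\alpha$-arguments coincide. If that hypothesis fails for \emph{some} rank-$k$ commutator, you cannot force alternation; the paper splits into Case~1 (all such commutators vanish on repeats, so alternation holds and one uses $-e$) and Case~2 (some commutator does not vanish on a repeat, so one takes $a_1=a_2$ and uses $+e$ with alternating signs in $i$). Your single-track argument covers only Case~1.

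\textbf{The kernel condition is the hard part and is not addressed.} The statement ``the same estimate shows that no homomorphism $\C\to\A$ separates $g$ from the $\aa_n$ over a single $S_n$'' is exactly where the real work lies. In the paper this is Lemmas~\ref{le:few}--\ref{le:large} and the lemma following them: a pigeonhole on the $d_i$ (using the gap $t=2|B|+1$) bounds the ``bad'' set $I$, and a second, more delicate pigeonhole on telescoped elements $w_i$ shows $v_{L,l}\equiv_\theta\ooo$ on long $I$-free intervals. None of this falls out of Lemma~\ref{le:csum}; it depends on the precise spacing of the $u_1,u_2$ entries in the $d_i$.

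\textbf{On the ``inherent'' step.} The criterion (Lemma~\ref{le:nd}) checks homomorphisms $\D\to\B$, not $\D\to\A$; the paper's bounds are therefore in $|B|$, and $t$ and $N$ are chosen accordingly. Your claim that ``the obstruction is phrased via maps into $\A$'' is not how the argument runs. It is true that enlarging the signature can only shrink the set of homomorphisms, which is the right monotonicity; but you must also ensure the subalgebra $\D$ generated in $\B^S$ still excludes $g$. In the paper this holds because the parity invariant of Lemma~\ref{le:parity} is proved using only that $D\subseteq A^{\dots}$ and the structure of $\Pol(\A)$ on the generators---but that again rests on the explicit construction you have not supplied.
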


 Let $\A$ and $\B$ satisfy the assumptions of Theorem~\ref{th:sn}. 
 We will show that $\B$ is not dualizable using the ghost element method in the form of the following lemma.

\begin{lem}[Non-dualizability {\cite[Lemma 5.2]{CD:CDT}}]  \label{le:nd}
Let $\B$ be a finite algebra and let $\nn \in \mathbb N$. Assume there is a
subalgebra $\D$ of\/~$\B^J$, for some set\/~$J$, and an infinite subset\/ $D_0$
of\/ $D$ such that
\begin{enumerate}
 \item for each homomorphism $\varphi \colon  \D \to \B$, the equivalence
     relation $\ker (\varphi|_{D_0})$ has a unique block of size more
     than\/~$\nn$, and
 \item the algebra $\D$ does not contain the element\/ $g$
     of\/ $B^J$ given by $g(i) := b_i(i)$, where $i\in J$ and $b_i$ is
     any element of the unique infinite block of\/ $\ker
     (\pi_i|_{D_0})$.
\end{enumerate}
Then $\B$ is non-dualizable.
\end{lem}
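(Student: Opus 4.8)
The plan is to run the ghost element method in its standard form: assume $\B$ is dualizable and contradict this by producing a ``ghost'' morphism in the double dual of $\D$ that is not an evaluation. Recall from Clark and Davey that if $\B$ is dualizable, there is an alter ego $\underaccent{\tilde}{\B}=\langle B;G,H,R,\mathcal T\rangle$ of $\B$ --- with $\mathcal T$ discrete, since $B$ is finite --- that yields a duality on $\mathcal A:=\mathbb{ISP}(\B)$. For $\X\in\mathcal A$ the dual $\mathrm D(\X)=\mathcal A(\X,\B)$ of homomorphisms carries the structure of $\underaccent{\tilde}{\B}$ interpreted coordinatewise from $\underaccent{\tilde}{\B}^X$, and for $\mathbf Y$ in the dual category $\mathcal X$ the underlying set of $\mathrm E(\mathbf Y)$ is the set of continuous, structure-preserving maps $\mathbf Y\to\underaccent{\tilde}{\B}$, with algebra operations computed pointwise. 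Since $\D\le\B^J$ lies in $\mathcal A$, the natural evaluation map $e_\D\colon \D\to\mathrm{ED}(\D)$, $d\mapsto(\varphi\mapsto\varphi(d))$, is an isomorphism; I will only use that it is \emph{onto}.

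For each $\varphi\in\mathrm D(\D)$, hypothesis~(1) provides a unique block $C_\varphi$ of $\ker(\varphi|_{D_0})$ with $|C_\varphi|>N$. Since $\varphi|_{D_0}$ realises at most $|B|$ blocks and each block other than $C_\varphi$ has at most $N$ elements, $C_\varphi$ is \emph{cofinite} in $D_0$, with $|D_0\setminus C_\varphi|\le(|B|-1)N$. Define $\hat g\colon\mathrm D(\D)\to B$ by $\hat g(\varphi):=\varphi(b)$ for any $b\in C_\varphi$; this is well defined, because $\varphi$ is constant on $C_\varphi$. Applied to a coordinate projection $\pi_i\in\mathrm D(\D)$ ($i\in J$) it gives $\hat g(\pi_i)=\pi_i(b)=b(i)=g(i)$, with $g$ as in hypothesis~(2). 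The heart of the proof is to show $\hat g\in\mathrm{ED}(\D)$, that is, that $\hat g$ is continuous and preserves $G\cup H\cup R$.

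For continuity, fix $\varphi_0\in\mathrm D(\D)$ and $b_0\in C_{\varphi_0}$, and choose a finite $F\subseteq C_{\varphi_0}$ with $b_0\in F$ and $|F|=N+1$; let $U:=\{\varphi\in\mathrm D(\D):\varphi|_F=\varphi_0|_F\}$, a basic clopen neighbourhood of $\varphi_0$. For $\varphi\in U$, $\varphi$ is constant on $F$ with value $\varphi_0(b_0)$ (since $\varphi_0$ is constant on $C_{\varphi_0}\supseteq F$), so $F$ lies in a single block of $\ker(\varphi|_{D_0})$; that block has more than $N$ elements, so by uniqueness it is $C_\varphi$, and hence $\hat g(\varphi)=\varphi(b_0)=\varphi_0(b_0)=\hat g(\varphi_0)$. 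Thus $\hat g$ is constant on $U$, hence locally constant, hence continuous into the discrete space $B$. For structure preservation, observe that any instance of an operation, partial operation, or relation of $\underaccent{\tilde}{\B}$ involves only finitely many members $\varphi_1,\dots,\varphi_n$ of $\mathrm D(\D)$ (together with, in the operation and partial-operation cases, the homomorphism $\psi\in\mathrm D(\D)$ obtained by applying that operation to $\varphi_1,\dots,\varphi_n$ coordinatewise). By cofiniteness, $C_{\varphi_1}\cap\dots\cap C_{\varphi_n}$ --- intersected with $C_\psi$ when $\psi$ is present --- is again cofinite in the infinite set $D_0$, hence contains some $b$; then $\hat g(\varphi_j)=\varphi_j(b)$ for all $j$ and $\hat g(\psi)=\psi(b)$ simultaneously, so evaluating the coordinatewise-defined structure of $\mathrm D(\D)\le\underaccent{\tilde}{\B}^D$ at the single coordinate $b$ yields exactly the identity or membership demanded of $\hat g$. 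Therefore $\hat g\in\mathrm{ED}(\D)$.

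Finally, surjectivity of $e_\D$ yields $d\in D$ with $\hat g=e_\D(d)$; evaluating at the projections gives $d(i)=e_\D(d)(\pi_i)=\hat g(\pi_i)=g(i)$ for every $i\in J$, so $g=d\in D$, contradicting hypothesis~(2). Hence $\B$ is non-dualizable. I expect the main obstacle to be the verification that $\hat g$ is a genuine morphism of $\mathcal X$: both halves hinge on the \emph{uniformity} of the bound $N$ --- continuity on the fact that only $N+1$ agreements already force which block is the large one, and preservation on the fact that each large block omits at most $(|B|-1)N$ points of $D_0$, so that finitely many large blocks still meet. The remaining care is merely in quoting the duality-theoretic facts correctly, namely that dualizability supplies an alter ego making $e_\D$ onto, and the coordinatewise descriptions of $\mathrm D(\D)$ and $\mathrm{ED}(\D)$.
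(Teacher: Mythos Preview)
The paper does not give a proof of this lemma at all: it is simply quoted from Clark, Davey, and Pitkethly~\cite{CD:CDT} as a black box (the ghost element method), and then applied. So there is no ``paper's own proof'' to compare against.

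That said, your argument is the standard ghost-element proof and is correct. The key points --- that the unique large block $C_\varphi$ is cofinite in $D_0$ with defect at most $(|B|-1)N$, that agreement on any $(N+1)$-element subset of $C_{\varphi_0}$ pins down the large block of any nearby $\varphi$, and that cofiniteness lets you find a common witness $b$ for any finite tuple of homomorphisms --- are exactly what makes $\hat g$ a continuous morphism $\mathrm D(\D)\to\underaccent{\tilde}{\B}$. One small remark: you should note that $C_{\pi_i}$, being the unique block of size greater than $N$ and the complement in the infinite set $D_0$ of at most $(|B|-1)N$ points, is in fact the unique \emph{infinite} block referred to in hypothesis~(2), so that your $\hat g(\pi_i)$ really does equal the $g(i)$ defined there.
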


 Since $\A$ is nilpotent, it has a Mal'cev term operation $m$ by~\cite[Theorem 6.2]{FM:CTFC}.
 By assumption there exists a non-abelian congruence $\beta$ of $\A$ and some $k' \geq 2$ such that
 $[\underbrace{\beta,\dots,\beta}_{k'+1}] = 0$.
 Let $\alpha\in\Con(\A)$ be minimal such that $\alpha\leq\beta$ and $\alpha$ is non-abelian.
 Since $\A$ is nilpotent, $\gamma := [\alpha,\alpha]$ is strictly smaller than $\alpha$ and
 consequently abelian.
 Further we have $2\leq k\leq k'$ such that
 $[\underbrace{\alpha,\dots,\alpha}_{k+1}] = 0$ but $[\underbrace{\alpha,\dots,\alpha}_k] \neq 0$.
 Hence, by Lemma~\ref{le:hcgen}, there exist a commutator $c\in\Pol_{k+1}(\A)$ and
 $(x_1,z),\dots,(x_k,z)\in\alpha$ such that $c(x_1,\dots,x_k,z) \neq z$.
 We will distinguish the following two cases.

\emph{Case 1, all commutators $c\in\Pol(\A)$ of rank $k$ satisfy $c(x_1,\dots,x_k,z) = z$ whenever $x_i = x_j$
 for some $i\neq j$ and $(x_1,z),\dots,(x_k,z)\in\alpha$:}
 Fix a commutator $f\in\Pol_{k+1}(\A)$, and fix $\o\in A, (a_1,o), \dots,$ $(a_k,\o)\in\alpha$ such that
 $f(a_1,\dots,a_k,\o) \neq\o$.

\emph{Case 2, there exists a commutator $c\in\Pol(\A)$ of rank $k$ and $(x_1,z),\dots,(x_k,z)\in\alpha$
 such that $x_i = x_j$ for some $i\neq j$ and $c(x_1,\dots,x_k,z) \neq z$:}
 By permuting coordinates if necessary, we then also have a commutator $f\in\Pol_{k+1}(\A)$ of rank $k$
 and $\o\in A, (a_1,o), \dots,(a_k,\o)\in\alpha$ with $a_1 = a_2$ such that $f(a_1,\dots,a_k,\o) \neq \o$.

 With elements $a_1,\dots,a_k,\o$ and commutator $f$ chosen according to the above cases we proceed to construct
 the subuniverse $D$ of $(\B^{\P(\kset)})^\Z$ (where $\P(\kset)$ is the power set of $\kset$).

 For $a\in A$, let $\aa\in A^{\P(\kset)}$ such that $\aa(S) := a$ for all $S\subseteq \kset$.
 For $i\in\kset$, define $u_i\in A^{\P(\kset)}$ by
\[ u_i(S) := \left\{ \begin{array}{ll} a_i & \text{if } i\in S, \\
                                      \o & \text{else.} \end{array}\right. \]
 Let $t := 2|B|+1$. For $i\in\Z$ let $d_i\in (A^{\P(\kset)})^\Z$ be given by
 \[ d_i(j) := \left\{ \begin{array}{ll} u_1 & \text{if } j \in \{i, i+t+3\}, \\
                                      u_2 & \text{if } j \in \{i +1, i+t+2\}, \\
                                      \oo & \text{else.} \end{array}\right. \]
 Then
\[ d_i = ( \dots, \oo, \underset{i}{u_1}, u_2,\underbrace{\oo, \dots,\oo}_{t}, u_2, u_1,\oo,\dots ). \]
 For $l\in\{3,\dots,k\}$, let
\[ c_l := ( \dots,u_l, u_l,\dots) \in (A^{\P(\kset)})^\Z, \]
 and for $a\in A$ let
\[ \aaa := ( \dots, \aa,\aa, \dots) \in (A^{\P(\kset)})^\Z, \]
i.e., $c_l(j)=u_l$ and $\aaa(j)=\aa$ for all $j \in \Z$.

 We let $D$ be the subuniverse of $(\B^{\P(\kset)})^\Z$ that is generated by
 $\{d_i\setsuchthat i\in\Z\}\cup\{c_3,\dots,c_k\}\cup\{\bar{\bar{a}}\setsuchthat a\in A\}$.
 Note that $D\subseteq (A^{\P(\kset)})^\Z$.

 To describe the set $D_0$ from Lemma~\ref{le:nd} we construct yet more elements in $D$. For $i\in\Z$ consider
\[ \begin{array}{cccccccccccc}
 d_i & = & (\dots  \oo & \overset{i-t-2}{\oo} & \oo & \oo  \dots  \oo& \overset{i}{u_1} & u_2  & \oo \dots \oo& \overset{i+t+2}{u_2} & u_1 & \oo \dots ) \\
 d_{i-t-2} & = & ( \dots \oo & u_1 & u_2 & \oo  \dots  \oo & u_2 & u_1 &\oo  \dots \oo & \oo &\oo & \oo \dots )
\end{array} \]
 and define $v_{i,i+1} := f(d_i, d_{i-t-2},c_3,\dots,c_k,\ooo)$. Then
\[ v_{i,i+1}(j) = \left\{ \begin{array}{ll}
    f(u_1,u_2,u_3,\dots,u_k,\oo) & \text{if } j = i, \\
    f(u_2,u_1,u_3,\dots,u_k,\oo) & \text{if } j = i+1, \\
    \oo & \text{else}. \end{array}\right. \]
 Now let $e := f(u_1,\dots,u_k,\oo)$. Since $f$ is a commutator, we have
\[ e(S) = \left\{ \begin{array}{ll} f(a_1,\dots,a_k,\o) & \text{if } S = \kset, \\
                                                                   \o & \text{else}. \end{array}\right. \]
 We recall that $\gamma = [\alpha,\alpha]$ is abelian. For $x,y\in\o/\gamma$ define
\[ x+_\o y := m(x,\o,y). \]
 Then $\algop{\o/\gamma}{+_\o}$ is an abelian group by Lemma~\ref{le:c}.

 Since $k\geq 2$, we have that $f(a_1,\dots,a_k,\o) \equiv_\gamma \o$ by Lemma~\ref{le:c2}.
 Hence all entries of $f(u_1,u_2,u_3,\dots,$ $u_k,\oo)$ are contained in $\o/\gamma$.
 Similarly $f(u_2,u_1,u_3,\dots,u_k,\oo) \in (\o/\gamma)^{\P(\kset)}$.

\emph{In Case 1:}
 Lemma~\ref{le:ma} yields $f(a_2,a_1,a_3,\dots,a_k,\o) = -f(a_1,a_2,a_3,\dots,a_k,\o)$ in the group
 $\algop{\o/\gamma}{+_\o}$. Hence
\[ v_{i,i+1} = ( \dots,\oo, \underset{i}{e}, \underset{i+1}{-e},\oo,\dots ). \]
 For $i < j$ define $v_{i,j} := \sum_{l=i}^{j-1}v_{l,l+1}$. Then
\[ v_{i,j} = ( \dots,\oo, \underset{i}{e},\oo,\dots,\oo, \underset{j}{-e},\oo,\dots ). \]
\emph{In Case 2:} From $a_1 = a_2$ we obtain
\[ v_{i,i+1} = ( \dots,\oo, \underset{i}{e}, \underset{i+1}{e},\oo,\dots ). \]
 For $i<j$ let $v_{i,j} := \sum_{l=i}^{j-1}(-1)^{l-i}v_{l,l+1}$, which yields
\[ v_{i,j} = ( \dots,\oo, \underset{i}{e},\oo,\dots,\oo, \underset{j}{(-1)^{j-i-1} e},\oo,\dots ). \]

 We want to use Lemma~\ref{le:nd} with
 $D_0:= \{v_{0,i} \setsuchthat i\in\N\}$, $\nn:=2|B|(2|B|-1)$, and the ghost element $g$ with $g(0):=e$
 and $g(i):=\oo$ for $i \ne 0$.
 We will first establish the second condition of the Lemma, i.e., that the ghost is not in $D$.
 This still requires some more preparation.

 First we observe that if an element in $D$ is congruent to $\ooo$ modulo $\gamma$, then it can be written
 as sum of a constant and commutators evaluated at generators of $\D$ such that every summand is congruent to
 $\ooo$ modulo $\gamma$.

\begin{lem} \label{le:local}
 Let $l < r$, $n := r-l+k-1$, and let $h\in\Pol_{n+1}(\A)$ such that
\[ h(d_l,d_{l+1},\dots,d_r,c_3,\dots,c_k,\ooo) \equiv_\gamma \ooo. \]
 For $S\subseteq \n, S\neq\emptyset$, let $c_S\in\Pol_{|S|+1}(\A)$ be commutators
 such that $\forall x\in A^{n},z\in A$
\[ h(x,z) = h(z,\dots,z,z) +_z \sum_{S\subseteq \n,S\neq\emptyset} c_S(x_S,z). \]
 Write $g_1 := d_l, g_2 := d_{l+1},\dots, g_{r-l+1} :=  d_r,  g_{r-l+2} := c_3, \dots, g_{n} :=c_k$.
 Then $h(\ooo,\dots,\ooo) \equiv_\gamma \ooo$ and
 $\forall S \subseteq \n, S\neq\emptyset$:
\[ c_S(g_S,\ooo) \equiv_\gamma \ooo. \]
\end{lem}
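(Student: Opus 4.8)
The plan is to exploit the structure of the generating tuples $g_1,\dots,g_n$ together with the multilinear expansion provided by Lemma~\ref{le:csum}. First I would note that each generator $g_i$ agrees with $\ooo$ in all but finitely many coordinates $j\in\Z$; indeed $d_l,\dots,d_r$ differ from $\ooo$ only on the ``windows'' of width $t+4$ around $l,\dots,r$, while $c_3,\dots,c_k$ are constant sequences with value $u_m$. Evaluating $h$ coordinatewise, the decomposition $h(x,z)=h(z,\dots,z,z)+_z\sum_{S}c_S(x_S,z)$ holds in $\A$ in every coordinate $j$, so it holds in $(\A^{\P(\kset)})^\Z$ after applying it pointwise (all sums being ordered and associated as in Lemma~\ref{le:csum}). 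Thus it suffices to understand, coordinate by coordinate, which summands $c_S(g_S,\ooo)(j)$ can be nonzero modulo $\gamma$.

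The key observation is a ``locality'' phenomenon: the generators $g_1,\dots,g_{r-l+1}=d_l,\dots,d_r$ have pairwise \emph{non-overlapping} supports in the sense that no coordinate $j\in\Z$ lies in the support of two different $d_i$'s simultaneously once we account for how the $d_i$ are spaced (the gap $t$ and the offsets $1,t+2,t+3$ are chosen precisely so that the windows of $d_l,\dots,d_r$ are disjoint, or at least so that in each coordinate at most one $d_i$ is not $\oo$ there). Hence for a commutator $c_S$ with $S$ containing two distinct indices $i,i'\le r-l+1$, in every coordinate $j$ at least one of $g_i(j),g_{i'}(j)$ equals $\oo$, and since $c_S$ is a commutator, $c_S(g_S,\ooo)(j)=\ooo(j)$ in that coordinate; so $c_S(g_S,\ooo)=\ooo$ outright for such $S$. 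The remaining sets $S$ are those meeting $\{1,\dots,r-l+1\}$ in at most one index, together with any subset of the ``constant'' indices $\{r-l+2,\dots,n\}$. For these the value $c_S(g_S,\ooo)$ is itself built (coordinatewise) from $c_S$ applied to $u_1,u_2,u_m$ and $\oo$, which by Lemma~\ref{le:c2} (using $k\ge 2$ and $[\alpha,\alpha]=\gamma$, since each $u_i$ has all entries $\alpha$-related to $\oo$) already lies entirely in $(\o/\gamma)^{\P(\kset)}$; i.e. each of these surviving summands is individually congruent to $\ooo$ modulo $\gamma$.

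It then remains to treat the constant term $h(\ooo,\dots,\ooo)$. Since $h(d_l,\dots,d_r,c_3,\dots,c_k,\ooo)\equiv_\gamma\ooo$ by hypothesis, and every $c_S$-summand has just been shown to be $\equiv_\gamma\ooo$, the sum $\sum_S c_S(g_S,\ooo)$ is $\equiv_\gamma\ooo$ (using that $\gamma$-classes are closed under $+_z$, by Lemma~\ref{le:c}, since $\gamma$ is abelian); subtracting, $h(\ooo,\dots,\ooo)\equiv_\gamma\ooo$ as well. This gives all the claimed congruences. The main obstacle I anticipate is the bookkeeping needed to verify the disjoint-support claim rigorously — one must check that for every coordinate $j\in\Z$ and every pair $i\ne i'$ with $g_i,g_{i'}\in\{d_l,\dots,d_r\}$, at least one of $g_i(j),g_{i'}(j)$ is $\oo$, which relies on the precise spacing $t=2|B|+1$ and the offset pattern $\{0,1,t+2,t+3\}$ in the definition of $d_i$; everything else is a routine pointwise application of the earlier lemmas.
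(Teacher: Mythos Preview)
Your proposal has a genuine gap, and in fact the ``key observation'' you rely on is false.

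First, the disjoint-support claim fails: the supports of consecutive generators $d_i$ and $d_{i+1}$ \emph{do} overlap. From the definition, $d_i$ is nonzero exactly at coordinates $\{i,i+1,i+t+2,i+t+3\}$, while $d_{i+1}$ is nonzero at $\{i+1,i+2,i+t+3,i+t+4\}$; these share $i+1$ and $i+t+3$. So your argument that $c_S(g_S,\ooo)=\ooo$ whenever $S$ contains two $d$-indices does not go through. (In fact the paper dispenses with any support argument for $|S|\ge 2$: since every $g_i\equiv_\alpha\ooo$, Lemma~\ref{le:c2} with $\alpha=\beta$ gives $c_S(g_S,\ooo)\equiv_{[\alpha,\alpha]}\ooo$ directly.)

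Second, and more seriously, your treatment of the singleton case $S=\{s\}$ is incorrect. Lemma~\ref{le:c2} requires a commutator of rank at least~$2$; for $c_{\{s\}}\in\Pol_2(\A)$ it does not apply. From $u_1\equiv_\alpha\oo$ one only gets $c_{\{s\}}(u_1,\oo)\equiv_\alpha\oo$, not $\equiv_\gamma\oo$, and there is no reason this should hold for an arbitrary unary commutator. The hypothesis $h(g_1,\dots,g_n,\ooo)\equiv_\gamma\ooo$ must be used here, and this is exactly what the paper does: it evaluates $h(g_1,\dots,g_n,\ooo)$ at carefully chosen coordinates $(l-1+s)(\{1\})$ (respectively $(\{2\})$), where all but one $g_i$ contributes $\o$, to extract $h(\o,\dots,\o)+_\o c_{\{s\}}(a_1,\o)\equiv_\gamma\o$, and runs an induction on $s$ (shifting by $t+3$) to handle all indices. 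This inductive coordinate-picking argument is the heart of the lemma, and your proposal does not contain it. Your derivation of $h(\ooo,\dots,\ooo)\equiv_\gamma\ooo$ also inherits this gap, since it presupposes the singleton case; the paper instead reads it off directly from the coordinate $(l)(\emptyset)$.
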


\begin{proof}
 We will once again write $+$ instead of $+_z$. We have
\[ \begin{array}{ccccccccccccccc}
    g_1 & = & (\dots & \overset{l}{u_1} & u_2 &  \oo   &    \dots    & u_2 & \overset{l+t+3}{u_1}   & \oo    &\dots   & \oo & \oo    &      & \dots )\\
    g_2 & = & (\dots &  \oo   & u_1 & u_2 & \dots  & \oo    & u_2   & u_1 & \dots  & \oo  & \oo       &   & \dots )\\
     & \vdots &      &     &    &     &        &     &       &     &   &  &        &   & \\
 g_{t+4} & = & (\dots &   \oo  & \oo   & \oo    & \dots       & \oo    & u_1  & u_2 &   \dots   & u_2 & u_1 &  & \dots ) \\
     & \vdots &      &     &    &     &        &     &       &     &   &  &     &     & \\
 \end{array} \]
 Note that $(h(g_1,\dots, g_n,\ooo))(l)(\emptyset) = h(\o,\dots,\o)$ implies
 $h(\ooo,\dots,\ooo) \equiv_\gamma \ooo$.
 If $|S| \geq 2$, then $c_S(g_S,\ooo) \equiv_{\gamma} \ooo$ by Lemma~\ref{le:c2}.
 So it only remains to prove the assertion for $c_S$ with $|S| = 1$. 
 First we claim
\begin{equation} \label{eq:csa1}
 c_{\{s\}}(a_1,o) \equiv_\gamma \o \text{ for all } s\in\{1,\dots,r-l+1\}.
\end{equation}
 For the proof we use induction on $s$. For $s\in\{1,\dots,t+3\}$ we have
\[ (h(g_1,\dots,g_n,\ooo))(l-1+s)(\{1\}) = h(\o,\dots,\o)+c_{\{s\}}(a_1,o) \]
 and consequently $c_{\{s\}}(a_1,o) \equiv_\gamma \o$.
 Now let $s\in\{t+4,\dots,r-l+1\}$. Then
\[ (h(g_1,\dots,g_n,\ooo))(l-1+s)(\{1\}) = h(\o,\dots,\o)+c_{\{s-(t+3)\}}(a_1,o)+ c_{\{s\}}(a_1,o). \]
 Since $c_{\{s-(t+3)\}}(a_1,o) \equiv_\gamma \o$ by induction assumption, we get $c_{\{s\}}(a_1,o) \equiv_\gamma \o$
 as well. Hence~\eqref{eq:csa1} is proved.

 Similarly we obtain $c_{\{s\}}(a_2,o) \equiv_\gamma \o$, which together with~\eqref{eq:csa1} implies
 $c_{\{s\}}(g_s,\ooo) \equiv_\gamma \ooo$ for all $s\in\{1,\dots,r-l+1\}$.

 Now let $j\in\{3,\dots,k\}$. Then
\[ (h(g_1,\dots,g_n,\ooo))(l)(\{j\}) = h(\o,\dots,\o)+c_{\{r-l+j-1\}}(a_j,o), \]
 which yields $c_{\{s\}}(g_s,\ooo) \equiv_\gamma \ooo$ for all $s\in\{r-l+2,\dots,n\}$.
\end{proof}

 We are now ready to characterize those elements in $D$ that are congruent to $\ooo$ modulo
 $\gamma$ by their parities.

\begin{lem} \label{le:parity}
 Let $w\in D$ be such that $w(i)(S) \equiv_\gamma \o$ for all $i\in\Z, S\in\P(\kset)$.
 Assume that $w(i) = \oo$ for all but finitely many $i\in\Z$.
Then the following parity conditions hold in the abelian group $\algop{\o/\gamma}{+_\o}$.

 \noindent In Case 1:
\[ \sum_{i\in\Z} \sum_{S\subseteq \kset} (-1)^{|S|} w(i)(S) = \o. \]
 In Case 2:
\[ \sum_{i\in\Z} (-1)^i \sum_{S\subseteq \kset} (-1)^{|S|} w(i)(S) = \o \]
\end{lem}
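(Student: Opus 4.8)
\textbf{Proof proposal for Lemma~\ref{le:parity}.}

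The plan is to argue by structural induction on how $w$ is built from the generators of $D$. Recall $D$ is generated by $\{d_i : i\in\Z\}\cup\{c_3,\dots,c_k\}\cup\{\aaa : a\in A\}$, so any $w\in D$ has the form $h(d_{l},\dots,d_{r},c_3,\dots,c_k,\ooo)$ for some polynomial $h$ on $\A$ (after padding the list of generators; constants $\aaa$ are absorbed into the last argument, and only finitely many $d_i$ can occur effectively by the finite-support hypothesis). I would first define, for a finitely supported $w\in(\o/\gamma)^{\P(\kset)})^\Z$, the ``weight''
\[ \sigma_1(w) := \sum_{i\in\Z}\sum_{S\subseteq\kset}(-1)^{|S|}w(i)(S), \qquad
   \sigma_2(w) := \sum_{i\in\Z}(-1)^i\sum_{S\subseteq\kset}(-1)^{|S|}w(i)(S), \]
computed in the abelian group $\algop{\o/\gamma}{+_\o}$. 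The goal is to show $\sigma_1(w)=\o$ in Case~1 and $\sigma_2(w)=\o$ in Case~2 for every $w\in D$ satisfying the hypotheses.

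The key reduction is Lemma~\ref{le:local}: writing $w = h(g_1,\dots,g_n,\ooo)$ with $h(x,z) = h(z,\dots,z,z) +_z \sum_{\emptyset\neq S\subseteq\n} c_S(x_S,z)$ via Lemma~\ref{le:csum}, the hypothesis $w\equiv_\gamma\ooo$ forces $h(\ooo,\dots,\ooo)\equiv_\gamma\ooo$ and $c_S(g_S,\ooo)\equiv_\gamma\ooo$ for every $S$. Since $\gamma/0$ on the relevant congruence classes is an abelian group operation under $+_z = +_\o$ (Lemma~\ref{le:c}), and $\sigma_1,\sigma_2$ are additive with respect to $+_\o$, it suffices to check the parity identity separately for the constant summand $h(\ooo,\dots,\ooo)$ and for each commutator summand $c_S(g_S,\ooo)$. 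The constant summand contributes $\sum_{i\in\Z}\sum_{S}(-1)^{|S|}\cdot(\text{const})$; since the entries of $\ooo$ are independent of $S$, the inner alternating sum $\sum_{S\subseteq\kset}(-1)^{|S|}=0$ vanishes (as $k\geq 2$), killing this summand for both $\sigma_1$ and $\sigma_2$. For $|S|\geq 2$ the summand $c_S(g_S,\ooo)$ is $\gamma$-trivial by Lemma~\ref{le:c2}, so contributes $\o$. The remaining work is the $|S|=1$ summands $c_{\{s\}}(g_s,\ooo)$, and here I would compute the weight of such a term explicitly.

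For a single-variable commutator $c_{\{s\}}$ evaluated at a generator $g_s$, the point is that $c_{\{s\}}$ is a commutator in one variable, so $c_{\{s\}}(\aa,\ooo)$ differs from $\ooo$ only on coordinates where $\aa$ differs from $\oo$; unwinding the definitions of $u_i$ (which equals $a_i$ on $S\ni i$ and $\o$ otherwise) one gets that $c_{\{s\}}(u_i,\oo)$ is supported on those $S$ with $i\in S$, and by Lemma~\ref{le:ma}\eqref{it:multi} applied one variable at a time — or just by the commutator absorption property — $c_{\{s\}}(u_i,\oo)(S)$ depends on $S$ only through membership of $i$, hence $\sum_{S\subseteq\kset}(-1)^{|S|}c_{\{s\}}(u_i,\oo)(S) = \bigl(\sum_{S:i\in S}(-1)^{|S|}\bigr)\cdot c_{\{s\}}(a_i,\o) = 0$ again because $k\geq 2$ forces the partial alternating sum $\sum_{S\ni i}(-1)^{|S|}=0$. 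So each $c_{\{s\}}$ summand has weight $\o$ on \emph{every} coordinate block, and in particular $\sigma_1 = \sigma_2 = \o$ on it. Now, since the generator $d_i$ is $u_1$ on coordinates $\{i,i+t+3\}$, $u_2$ on $\{i+1,i+t+2\}$, and $\oo$ elsewhere, the coordinate-shifts are what make the \emph{signed-by-$i$} sum $\sigma_2$ the right invariant in Case~2: one checks $(-1)^i + (-1)^{i+t+3} = 0$ and $(-1)^{i+1}+(-1)^{i+t+2}=0$ because $t$ is odd (indeed $t = 2|B|+1$), so the contributions of $u_1$-coordinates and of $u_2$-coordinates cancel in pairs; whereas for $\sigma_1$ the plain sum the two $u_1$'s add and the two $u_2$'s add, which is why in Case~1 we pair differently. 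I would handle $c_l$ ($= u_l$ in every coordinate) and $\aaa$ similarly — their weight vanishes coordinate-wise as above.

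The main obstacle I anticipate is bookkeeping the interaction between the two levels of indexing (the $\Z$-coordinate and the $\P(\kset)$-coordinate) and getting the signs in $\sigma_2$ to match the $(-1)^{l-i}$ sign convention used to define $v_{i,j}$ in Case~2 — one must verify that the parity invariant $\sigma_2$ is exactly preserved by the generators, using crucially that $t$ is odd so that the two occurrences of $u_1$ in $d_i$ (at positions $i$ and $i+t+3$, whose parities agree since $t+3$ is even) and the two occurrences of $u_2$ (at $i+1$ and $i+t+2$) enter $\sigma_2$ with opposite... wait, with \emph{equal} sign, so in fact the generator $d_i$ itself already satisfies $d_i\equiv_\gamma\ooo$? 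No — $d_i$ has entries $a_1,a_2$ not in $\o/\gamma$ in general, so $d_i$ is excluded from the hypothesis of the lemma; only the commutator-built elements like $v_{i,j}$ fall under it, and there the cancellation is exactly what forces the stated parity. Once one has set up $\sigma_1,\sigma_2$ as $+_\o$-homomorphisms on finitely-supported tuples and shown each commutator summand from Lemma~\ref{le:local} has zero weight, the result follows; I would close by applying this to $w = v_{0,i}$ as a sanity check, recovering $\sigma_1(v_{0,i}) = e + (-e) = \o$ in Case~1 and $\sigma_2(v_{0,i}) = e + (-1)^{i}\cdot(-1)^{i-1}e = e - e = \o$ in Case~2.
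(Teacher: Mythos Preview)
Your proposal has a genuine gap at the step ``For $|S|\geq 2$ the summand $c_S(g_S,\ooo)$ is $\gamma$-trivial by Lemma~\ref{le:c2}, so contributes~$\o$.'' Lemma~\ref{le:c2} only yields $c_S(g_S,\ooo)\equiv_\gamma\ooo$, not equality; but \emph{every} element satisfying the hypothesis of Lemma~\ref{le:parity} already lies in $\o/\gamma$ coordinatewise, so being $\gamma$-close to $\ooo$ is no cancellation at all. Concretely, $v_{i,i+1}=f(d_i,d_{i-t-2},c_3,\dots,c_k,\ooo)$ is itself a rank-$k$ commutator evaluated at generators; it is congruent to $\ooo$ modulo $\gamma$, yet it equals $(\dots,\oo,e,\pm e,\oo,\dots)\neq\ooo$ and has nonzero inner alternating $\P(\kset)$-sum at the two nontrivial coordinates. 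These higher-rank commutator summands are precisely where the content of the lemma lives, and your argument discards them.

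The paper, after the same decomposition into commutator summands $c(g_1,\dots,g_n,\ooo)$ with $n\le k$ (using $[\underbrace{\alpha,\dots,\alpha}_{k+1}]=0$ and Lemma~\ref{le:hcgen} to discard $n>k$), does not split by the arity $n$ but by whether there exists a $\Z$-coordinate $j$ at which $\{g_1(j),\dots,g_n(j)\}\supseteq\{u_1,\dots,u_k\}$. If no such $j$ exists, then at every $j$ some $u_l$ is absent, so the value at $(j,T)$ is insensitive to whether $l\in T$, and the inner alternating sum over $T\subseteq\kset$ vanishes --- this is exactly your $|S|=1$ argument, now applied uniformly to all arities. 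If such a $j$ does exist, then necessarily $n=k$, and one can enumerate the four possible choices $(g_1,g_2)\in\{d_j,d_{j-t-3}\}\times\{d_{j-1},d_{j-t-2}\}$ with $g_3=c_3,\dots,g_k=c_k$; in each case $c(g_1,\dots,g_k,\ooo)$ is $\oo$ at all but two $\Z$-coordinates, where it takes the values $c(u_1,\dots,u_k,\oo)$ and $c(u_2,u_1,u_3,\dots,u_k,\oo)$. In Case~1 these are additive inverses by Lemma~\ref{le:ma}, giving $\sigma_1=\o$; in Case~2 they are equal (since $a_1=a_2$) and the two coordinates always differ by an odd integer ($1$ or $t+2$, with $t=2|B|+1$ odd), giving $\sigma_2=\o$. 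Your final paragraph circles around this index-parity mechanism but never carries it out for the rank-$k$ summands where it is actually needed.
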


\begin{proof}
 By Lemmas~\ref{le:csum} and~\ref{le:local}, $w$ is a sum of elements $c(g_1,\dots,g_n,\ooo)$ where
 $c\in\Pol_n(\A)$ is a commutator and
 $\{ g_1,\dots,g_n \} \subseteq \{d_i\setsuchthat i\in\Z\}\cup\{c_3,\dots,c_k\}$.
 Since $[\underbrace{\alpha,\dots,\alpha}_{k+1}] = 0$, we may assume $n \leq k$ (otherwise
 $c(g_1,\dots,g_n,\ooo) = \ooo$ by Lemma~\ref{le:hcgen}). Further, by Lemma~\ref{le:local}, all these summands are congruent to $\ooo$ modulo $\gamma$.
 Since $+_\o$ is commutative on $\o/\gamma$, it suffices to prove the assertion for every summand.
 So assume
 $w = c(g_1,\dots,g_n,\ooo)$ where $c\in\Pol_n(\A)$ is a commutator and
 $\{ g_1,\dots,g_n \} \subseteq \{d_i\setsuchthat i\in\Z\}\cup\{c_3,\dots,c_k\}$.

\emph{Case, $\{g_1(i), \ldots,g_n(i),\oo\}\neq\{u_1,\ldots,u_k,\oo\}$ for all $i\in\Z$:}
 We claim that
\begin{equation} \label{eq:local}
\forall i\in\Z\colon \sum_{S\subseteq \kset} (-1)^{|S|} w(i)(S) = \o.
\end{equation}
 For $i \in\Z$ fixed, let $l\in\kset$ be such that $u_l \not\in \{g_1(i), \ldots,g_n(i)\}$.
 Let $S\subseteq \kset\setminus\{l\}$. Since $u_j(S) = u_j(S\cup\{l\})$ for all $j\in\kset, j\neq l$, we obtain
\[ w(i)(S) = (c(g_1,\dots,g_n,\o))(i)(S) = (c(g_1,\dots,g_n,\o))(i)(S\cup\{l\}) = w(i)(S\cup\{l\}). \]
 From this, ~\eqref{eq:local} and the result follows.

\emph{Case, we have $i\in\Z$ such that $\{g_1(i), \ldots,g_n(i)\}=\{u_1,\ldots,u_k\}$:}
 Then $n = k$. Without loss of generality we may assume that $g_j(i) = u_j$ for all $j\in\kset$. Then
\begin{eqnarray*}
 g_1 & \in & \{d_i,d_{i-t-3} \}, \\
 g_2 & \in & \{d_{i-1},d_{i-t-2} \}, \\
 g_3 & = & c_3, \\
     & \vdots &  \\
 g_k & = & c_k.
\end{eqnarray*}
 In each of these $4$ cases we have $c(g_1,\dots,g_k,\ooo)(j) = \oo$ for all but possibly $2$ integers $j$.
 We always have
\[ c(g_1,\dots,g_k,\ooo)(i) = c(u_1,\dots, u_k,\oo) \]
 and depending on the choice for $g_1,g_2$ also
\[  \left.\begin{array}{l}
 c(d_i, d_{i-1}, c_3,\dots,c_k,\ooo)(i+t+2) \\
 c(d_i, d_{i-t-2}, c_3,\dots,c_k,\ooo)(i+1) \\
 c(d_{i-t-3}, d_{i-1}, c_3,\dots,c_k,\ooo)(i-1) \\
 c(d_{i-t-3}, d_{i-t-2}, c_3,\dots,c_k,\ooo)(i-t-2) \\
   \end{array} \right\} = c(u_2,u_1,u_3\dots, u_k,\oo). \]
 \emph{In Case 1}, Lemmas~\ref{le:ma} and~\ref{le:c2} yield
 $c(u_2,u_1,u_3\dots, u_k,\oo) = -c(u_1,\dots, u_k,\oo)$. Hence
\[ \sum_{j\in\Z} w(j) = \oo \]
 and the result follows.

 \emph{In Case 2}, $a_1 = a_2$ yields $c(u_1,\dots, u_k,\oo) = c(u_2,u_1,u_3\dots, u_k,\oo)$.
 Since the difference between $i$ and the second index at which $w$ is not $\oo$ is always odd, we obtain
\[ \sum_{j\in\Z} (-1)^j w(j) = \oo. \]
 Again the result follows.
\end{proof}

\begin{lem} \label{le:ghost}
 $g = (\dots,\oo, \underset{0}{e}, \oo,\dots)$ is not in $D$.
\end{lem}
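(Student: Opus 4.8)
The plan is to derive a contradiction from the assumption $g \in D$ by applying the parity conditions of Lemma~\ref{le:parity}. First I would suppose $g \in D$ and check that $g$ satisfies the hypotheses of Lemma~\ref{le:parity}: indeed $g(i)(S) = \o$ for all $(i,S) \neq (0,\kset)$, and $g(0)(\kset) = e(\kset) = f(a_1,\dots,a_k,\o)$, which lies in $\o/\gamma$ by Lemma~\ref{le:c2} since $k \geq 2$; moreover $g(i) = \oo$ for all $i \neq 0$, so the finiteness condition holds. Thus $g$ is congruent to $\ooo$ modulo $\gamma$ and Lemma~\ref{le:parity} applies.

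Next I would evaluate the relevant parity sum at $g$. In either case only the single index $i = 0$ and the single set $S = \kset$ contribute, so the sum collapses to $(-1)^{|\kset|} f(a_1,\dots,a_k,\o) = (-1)^k f(a_1,\dots,a_k,\o)$ in the abelian group $\algop{\o/\gamma}{+_\o}$ (in Case 2 the extra factor $(-1)^0 = 1$ changes nothing). Lemma~\ref{le:parity} would force this to equal $\o$, i.e. $f(a_1,\dots,a_k,\o) \equiv_\gamma \o$ with the group element being the identity; but $(-1)^k$ is a unit in the abelian group, so this gives $f(a_1,\dots,a_k,\o) = \o$ as group elements, contradicting our choice $f(a_1,\dots,a_k,\o) \neq \o$ (recall $f$ and $a_1,\dots,a_k,\o$ were chosen so that $f(a_1,\dots,a_k,\o) \neq \o$ in Case~1, and likewise in Case~2). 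This contradiction shows $g \notin D$.

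The only real subtlety — and the step I expect to need the most care — is confirming that $f(a_1,\dots,a_k,\o) \neq \o$ as an element of $A$ translates into a non-identity element of the group $\algop{\o/\gamma}{+_\o}$, rather than merely $f(a_1,\dots,a_k,\o) \not\equiv_\gamma \o$ being what we need. In fact we want the sharper statement: since all entries of $e$ lie in $\o/\gamma$ and $e(\kset) = f(a_1,\dots,a_k,\o) \neq \o$, the element $f(a_1,\dots,a_k,\o)$ is a non-zero element of $\o/\gamma$, hence $(-1)^k f(a_1,\dots,a_k,\o) \neq \o$ in the group. Here one uses that $f(a_1,\dots,a_k,\o) \neq \o$ literally (not just modulo $\gamma$): this is guaranteed by the case hypotheses, where $f$ was picked with $f(a_1,\dots,a_k,\o) \neq \o$. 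Putting this together with the collapsed parity sum completes the argument, so the proof is essentially a short verification that $g$ meets the hypotheses of Lemma~\ref{le:parity} followed by reading off the resulting equation.
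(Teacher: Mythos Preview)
Your proposal is correct and is exactly the paper's approach: the paper's proof consists of the single line ``Immediately from Lemma~\ref{le:parity},'' and you have simply unpacked that application. Your worry in the last paragraph is unnecessary, since the abelian group $\algop{\o/\gamma}{+_\o}$ has underlying set $\o/\gamma\subseteq A$ with identity element the actual element $\o\in A$, so $f(a_1,\dots,a_k,\o)\neq\o$ in $A$ is literally the statement that it is a non-identity group element.
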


\begin{proof}
 Immediately from Lemma~\ref{le:parity}.
\end{proof}

 Now we verify that the first condition of Lemma~\ref{le:nd} holds for $\nn := 2|B|(2|B|-1)$.
 Let $\varphi\colon \D\to \B$ be a homomorphism with kernel $\ker(\varphi) =: \theta$.
 We will show that $\ker(\varphi|_{D_0})$ has only one block of size greater than $\nn$.

 First we establish a bound on the number of elements $v_{i,i+1}$ that are not congruent to $\ooo$
 modulo $\theta$.

\begin{lem} \label{le:few}
 Let $I := \{ i\in\Z \setsuchthat v_{i,i+1} \not\equiv_\theta \ooo \}$. Then $|I| \leq 2|B|$.
\end{lem}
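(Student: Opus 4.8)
The idea is to compare the $v_{i,i+1}$ with the ghost-type elements $v_{0,i}$ whose parities we already control, and to push everything through the homomorphism $\varphi$. Recall that in Case~1 we have $v_{i,i+1}=(\dots,\oo,\underset{i}{e},\underset{i+1}{-e},\oo,\dots)$ and $v_{0,j}=\sum_{l=0}^{j-1}v_{l,l+1}$ is $e$ in coordinate $0$, $-e$ in coordinate $j$, and $\oo$ elsewhere; in Case~2 the analogous statements hold with alternating signs. In either case $v_{i,i+1}=v_{0,i+1}-_{\ooo}v_{0,i}$ (with an appropriate sign in Case~2), computed in the abelian group structure induced by $+_{\ooo}=m(\cdot,\ooo,\cdot)$ on $\ooo/\gamma^{\Z}$, using Lemma~\ref{le:c} to see these operations are well-behaved modulo $\gamma$. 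Applying $\varphi$, we get that $\varphi(v_{i,i+1})$ is determined by the pair $(\varphi(v_{0,i}),\varphi(v_{0,i+1}))$, and $v_{i,i+1}\equiv_\theta\ooo$ exactly when $\varphi(v_{0,i})=\varphi(v_{0,i+1})$ (up to the sign twist in Case~2).

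First I would record that all the $v_{0,i}$ lie in $\ooo/\gamma^{\Z}$ coordinatewise — this follows from Lemma~\ref{le:c2} as was already noted for the building blocks $f(u_1,\dots,u_k,\oo)$, since $k\ge 2$. Hence each $\varphi(v_{0,i})$ lies in the $\varphi$-image of the $\gamma$-class of $\ooo^{\ooo}$ in $\B$, which is a single $\ker(\varphi)$-block and in particular a set of size at most $|B|$. Next I would show that the map $i\mapsto\varphi(v_{0,i})$ changes value at most $|B|$ times: more precisely, the number of $i$ at which $\varphi(v_{0,i})\ne\varphi(v_{0,i+1})$ is bounded by $2|B|$. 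Here the key point is that the sequence $(\varphi(v_{0,i}))_{i\in\N}$ takes values in a set of size at most $|B|$, and the "parity" constraint from Lemma~\ref{le:parity} (applied to differences $v_{0,j}-v_{0,i}$ that happen to be congruent to $\ooo$ modulo $\gamma$ everywhere) forces the pattern of changes to be controlled. Actually the cleanest route: consider the elements $w_{i,j}:=v_{i,i+1}-_{\ooo}(\pm)v_{j,j+1}$ for $i,j\in I$; each such $w_{i,j}$ is congruent to $\ooo$ modulo $\gamma$ in every coordinate and is $\oo$ off a bounded set, so Lemma~\ref{le:parity} applies. If $v_{i,i+1}\equiv_\theta v_{j,j+1}$ for too many indices we would violate, via counting through $\varphi$, the structure forced by these parity relations; conversely the number of $\theta$-classes available among the $v_{i,i+1}$ (all of which land in one small block modulo $\gamma$, hence modulo $\theta$ in at most $|B|$ classes) is bounded.

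Concretely the counting I would run is: the values $\varphi(v_{0,i})$, $i\in\N$, all lie in one set of size $\le |B|$; so among any $2|B|+1=t$ consecutive indices there must be a repeat $\varphi(v_{0,i})=\varphi(v_{0,i'})$ with $i<i'$, forcing $v_{i,i'}\equiv_\theta\ooo$, i.e. $\varphi(v_{i,i+1})\cdots\varphi(v_{i'-1,i'})$ telescopes to $\varphi(\ooo)$. The set $I$ of "bad" indices is then where consecutive $\varphi(v_{0,i})$ differ; a sequence in a $|B|$-element set, read as $i$ ranges over $\Z$, can have at most... — and here is the one genuinely delicate point — one must rule out the values oscillating indefinitely. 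That is exactly what the long gap $t=2|B|+1$ in the definition of $d_i$ buys us: the overlap structure of $d_i$ and $d_{i-t-2}$ producing $v_{i,i+1}$ is what ties distant coordinates together, and combined with the parity lemma it forces $\varphi(v_{0,i})$ to stabilize for $i\ll 0$ and for $i\gg 0$, each stable value attained, with only finitely many — at most $2|B|$ — transitions in between. I expect this stabilization-and-counting argument, i.e.\ converting "values in a set of size $\le|B|$, constrained by the parity relations of Lemma~\ref{le:parity}" into the clean bound $|I|\le 2|B|$, to be the main obstacle; the rest is bookkeeping with $+_{\ooo}$ and Lemma~\ref{le:c}.
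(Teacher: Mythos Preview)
Your approach has a genuine gap, and it is precisely the point you flag as ``the one genuinely delicate point'': bounding the number of transitions of the sequence $i\mapsto\varphi(v_{0,i})$. A sequence taking values in a set of size $|B|$ can oscillate infinitely often, so pigeonhole on the $\varphi(v_{0,i})$ alone gives nothing. You appeal to Lemma~\ref{le:parity} to force stabilization, but that lemma is a statement about elements of $D$ (a parity constraint inside the group $\langle\o/\gamma,+_\o\rangle$), not about their $\varphi$-images or about $\theta$. Applying it to your $w_{i,j}=v_{i,i+1}\mp v_{j,j+1}$ yields no information: each $v_{l,l+1}$ already satisfies the parity condition individually, so $w_{i,j}$ does too, trivially. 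Nothing here touches $\theta$, and I do not see how to close this without an entirely new idea.

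The paper's proof is completely different and far shorter. It never looks at $v_{0,i}$ or at parity. Instead it pigeonholes on the \emph{generators} $d_i$: if $|I|>2|B|$, then among the $d_i$ with $i\in I$ three must share the same $\varphi$-image, say $d_p\equiv_\theta d_q\equiv_\theta d_r$ with $p<q<r$. Now recall that $v_{p,p+1}=f(d_p,d_{p-t-2},c_3,\dots,c_k,\ooo)$ by construction. Since $r\ge p+2$, the supports of $d_r$ and $d_{p-t-2}$ are disjoint, so at every coordinate one of the first two arguments of $f$ equals $\oo$; because $f$ is a commutator this forces $f(d_r,d_{p-t-2},c_3,\dots,c_k,\ooo)=\ooo$. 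Replacing $d_p$ by the $\theta$-equivalent $d_r$ then gives $v_{p,p+1}\equiv_\theta\ooo$, contradicting $p\in I$. The whole point of the gap $t=2|B|+1$ in the definition of $d_i$ is to make this disjoint-support trick work, not (as you suggest) to feed a stabilization argument via parity.
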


\begin{proof}
 Suppose that $|I| > 2|B|$. Then there are $p,q,r\in I$ with $p<q<r$ such that
 $d_p \equiv_\theta d_q \equiv_\theta d_r$. Observe that
 \[ \begin{array}{cccccccccccccccccc}
 d_{r} & = & (\dots & \overset{p-t-2}{\oo} & \oo  & \dots & \overset{p}{\oo} &
 {\oo}  & (\dots) & \overset{r}{u_1} & u_2  &\dots & \overset{r+t+2}{u_2} & u_1 &\dots ) \\
 d_{p-t-2} & = & ( \dots & u_1 & u_2              & \dots & u_2 & u_1               & (\dots) &  \oo &\oo & \dots & \oo & \oo & \dots)
\end{array} \]
 where the $(\dots)$ stands for arbitrary (potentially $0$) many terms $\oo$. In particular,
\[ \forall i\in\Z\colon d_{p-t-2}(i) = \oo \text{ or } d_{r}(i) = \oo. \]
 Thus
\[ v_{p,p+1} = f(d_p,d_{p-t-2},c_3,\dots,c_k,\ooo) \equiv_\theta f(d_r,d_{p-t-2},c_3,\dots,c_k,\ooo) = \ooo \]
 which contradicts $p\in I$. Hence $|I| \leq 2|B|$.
\end{proof}

 Next we show that if $L$ is a small enough integer and $J$ a long enough interval with
 $v_{l,l+1} \equiv_\theta \ooo$ for all $l\in J$, then $v_{L,l} \equiv_\theta \ooo$ for all $l\in J$.

\begin{lem} \label{le:large}
 Let $L < \min(I)$, and let $J := \{j,\dots, j+2|B|-2\}$ such that $I\cap J = \emptyset$.
 Then $v_{L,l} \equiv_\theta \ooo$ for all $l\in J$.
\end{lem}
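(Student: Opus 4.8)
The goal is to show that the ghost-carrying elements $v_{L,l}$ for $l$ in the gap interval $J$ are all trivial mod $\theta$, even though $L$ may sit far to the left of $J$. The plan is to exploit the telescoping structure of the $v$'s together with the pigeonhole bound $|I|\le 2|B|$ from Lemma~\ref{le:few} and the fact that $J$ has length $2|B|-1$, so $\theta$ must collapse some of the elements $v_{L,j},v_{L,j+1},\dots,v_{L,j+2|B|-2}$ to each other. The key algebraic identity is that $v_{L,l'} = v_{L,l} +_{\ooo} (\pm)\, v_{l,l'}$ (in Case~1; with alternating signs in Case~2), taken in the abelian group structure $+_{\ooo}$ on $\ooo/\gamma$ coordinatewise, which is legitimate because every $v_{i,j}$ has all entries in $\o/\gamma$ by Lemma~\ref{le:c2}. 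Thus from $v_{l,l+1}\equiv_\theta\ooo$ for all $l\in J$ we already get $v_{l,l'}\equiv_\theta\ooo$ for any $l,l'\in J$; the real content is to bootstrap this to $v_{L,l}$.

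**Main steps.** First I would record that for $l\le l'$ inside $J$ (or more generally whenever the interval $[l,l')$ avoids $I$), $v_{l,l'}$ is a $\theta$-combination of the trivial $v_{m,m+1}$'s and hence $v_{l,l'}\equiv_\theta\ooo$. Second, I would use the homomorphism $\varphi$: since $D_0 = \{v_{0,i}\}$ and $\varphi$ has only finitely many ($\le|B|$) values, among the $2|B|-1$ elements $v_{L,j},\dots,v_{L,j+2|B|-2}$ there must be two, say $v_{L,p}$ and $v_{L,q}$ with $p<q$ both in $J\cup\{j+2|B|-1\}$... more carefully, among $v_{L,l}$ for $l$ ranging over the $2|B|-1$ endpoints available in $J$ we find two that are $\theta$-congruent. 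Wait — I actually only have $2|B|-1 \le |B|$ fails when $|B|\ge 1$; so I should instead argue as follows. Third, the cleaner route: pick any fixed $l_0\in J$. For every other $l\in J$ we have $v_{L,l} = v_{L,l_0} +_{\ooo}(\pm v_{l_0,l})$ and $v_{l_0,l}\equiv_\theta\ooo$, hence $v_{L,l}\equiv_\theta v_{L,l_0}$ for all $l\in J$ simultaneously. So it suffices to show $v_{L,l_0}\equiv_\theta\ooo$ for one single $l_0\in J$. Fourth, to get that one instance I would use that $L<\min(I)$ together with a pigeonhole on the $d_i$'s analogous to Lemma~\ref{le:few}: since $|I|\le 2|B|$, there are "many" consecutive indices below $\min(I)$ where $v_{i,i+1}\equiv_\theta\ooo$, and I can slide $L$ rightward through such trivial steps — writing $v_{L,l_0}$ as a $+_{\ooo}$-sum of $v_{L,L+1},\dots$ up to $v_{l_0-1,l_0}$ — but the issue is that the steps between $L$ and $\min(I)$ need not all be trivial.

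**The actual mechanism and the obstacle.** The honest argument must be: there are at most $2|B|$ "bad" steps in all of $\Z$, so to the left of $J$ there is some index $L'$ with $L\le L'$, $L'<j$, such that the interval $[L',j)$ contains no element of $I$; equivalently $v_{L',l}\equiv_\theta\ooo$ for $l\in J$ by telescoping. Then I must connect $v_{L,l}$ to $v_{L',l}$: we have $v_{L,l} = v_{L,L'} +_{\ooo} v_{L',l}$, so $v_{L,l}\equiv_\theta\ooo$ iff $v_{L,L'}\equiv_\theta\ooo$, and this must hold because... here is where the hypothesis $L<\min(I)$ is used decisively: $v_{L,L'} = \sum_{l=L}^{L'-1}(\pm)v_{l,l+1}$, and every $l$ in $[L,L')$ lies below $\min(I)$? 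No — that is false in general. I expect the correct statement actually uses the telescoping in the opposite direction together with a bound like $L'-L$ being small, or uses that the number of nontrivial $v_{l,l+1}$ with $l<j$ is at most $|I|$ and these can be absorbed. The main obstacle, and the step I would spend the most care on, is precisely this: showing that the "cumulative drift" $v_{L,j}$ picks up no nontrivial $\theta$-contribution, which should follow by choosing within the long gap $J$ an endpoint $l_0$ and arguing that $v_{L,l_0}$ and $v_{L,l_0+1},\dots$ give $2|B|-1$ values of $\varphi$, forcing a repeat $\varphi(v_{L,p})=\varphi(v_{L,q})$ with $p,q\in J\cup\{\max J +1\}$, $p<q$; then $v_{L,p}\equiv_\theta v_{L,q}$ gives $v_{p,q}\equiv_\theta\ooo$ (already known) — that is circular. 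The resolution I anticipate the authors use: among $v_{L,j},v_{L,j+1},\dots,v_{L,j+2|B|-1}$ (that's $2|B|$ values, indices $j,\dots,j+2|B|-1$, all legitimate since $|J|=2|B|-1$ gives endpoints $j,\dots,j+2|B|-1$), two are equal under $\varphi$, say at $p<q$; combined with $v_{p,q}\equiv_\theta\ooo$ this is automatic and useless — so instead the $2|B|$ comes from comparing $v_{L,j}$ with the $2|B|$ left-translates $v_{L',j}$ as $L'$ ranges over a block of trivial steps just left of $j$, forcing $v_{L',j}\equiv_\theta v_{L'',j}$ hence $v_{L',L''}\equiv_\theta\ooo$ and then $v_{L,L''}\equiv_\theta v_{L,L'}$, grinding $L$ down to $\min(I)$; past $\min(I)$ the hypothesis $L<\min(I)$ is irrelevant... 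I will therefore, in the writeup, reduce everything to: choose $l_0\in J$, show $v_{L,l}\equiv_\theta v_{L,l_0}$ for all $l\in J$ (easy, via $v_{l,l_0}\equiv_\theta\ooo$), then show $v_{L,l_0}\equiv_\theta\ooo$ by a separate pigeonhole using $|I|\le 2|B|$ applied to the $d_i$ with $i<\min(I)$ exactly as in Lemma~\ref{le:few}, which forces $v_{L,L'}\equiv_\theta\ooo$ for a suitable $L'$ with the $[L',l_0)$-interval hitting no element of $I$, completing the chain.

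$$v_{L,l} \;=\; v_{L,l_0} \;+_{\ooo}\; (\pm)\,v_{l_0,l} \;\equiv_\theta\; v_{L,l_0} \;\equiv_\theta\; \ooo \qquad (l\in J).$$
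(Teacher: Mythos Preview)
Your reduction is correct: once you know $v_{L,l_0}\equiv_\theta\ooo$ for a single $l_0\in J$, the fact that $v_{l,l+1}\equiv_\theta\ooo$ for all $l\in J$ propagates this to all of $J$. The gap is that you never produce that single instance. Every pigeonhole argument you sketch --- on the $v_{L,l}$'s, on the $d_i$'s with $i<\min(I)$, on left-translates $v_{L',j}$ --- collapses to information you already have: repeats among the $v_{L,l}$ for $l\in J$ only recover $v_{p,q}\equiv_\theta\ooo$ with $p,q\in J$ (known), and a Lemma~\ref{le:few}-style argument on $d_i$'s only yields some $v_{p,p+1}\equiv_\theta\ooo$ (a one-step fact). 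None of this bridges the interval from $L$ across the bad set $I$ to $J$, because the telescoping sum $v_{L,l_0}=\sum(\pm)v_{l,l+1}$ unavoidably picks up the nontrivial terms with $l\in I$.

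The missing idea, which the paper supplies, is to manufacture a \emph{long} jump directly rather than by telescoping short ones. One builds auxiliary elements $w_i\in D$ (iterated Mal'cev combinations of $d_i,d_{i+(t+3)},\dots,d_{i+2n(t+3)}$) whose $u_1$-entries sit at positions $i$ and $i+T$ with $T:=(2n+1)(t+3)$ chosen so large that $T\ge j+2|B|-1-L$. Then $f(w_i,w_{i-1},c_3,\dots,c_k,\ooo)=v_{i,\,i+T-1}$. Pigeonholing among the $2|B|+1$ elements $w_{j-T+1},\dots,w_{j-T+2|B|+1}$ gives three $\theta$-equal ones $w_p\equiv_\theta w_q\equiv_\theta w_r$, and --- exactly as in Lemma~\ref{le:few} --- the supports of $w_{p-1}$ and $w_r$ are disjoint, so $v_{p,\,p+T-1}\equiv_\theta f(w_r,w_{p-1},\dots)=\ooo$. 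Now $p\le L$ and $p+T-1\in J$: since every step from $p$ up to $L$ lies below $\min(I)$ and is therefore $\theta$-trivial, telescoping gives $v_{L,\,p+T-1}\equiv_\theta\ooo$, which is the one instance you needed. Without the $w_i$ construction (or something equivalent that produces a single $v_{p,q}\equiv_\theta\ooo$ with $p\le L$ and $q\in J$), your argument cannot close.
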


\begin{proof}
 Choose an integer $n \ge 1$ such that $j+2|B|-1-L \leq (2n+1)(t+3)$.
 For $i\in\Z$ define
\[ w_i := m(\dots (m(d_i,d_{i+(t+3)},d_{i+2(t+3)}),\dots), d_{i+(2n-1)(t+3)},d_{i+2n(t+3)}). \]
 Let $-u_2 := m(\oo,u_2,\oo)$. It is straightforward to see
\[ w_i(l) = \begin{cases}
  u_1 & \text{if } l\in\{i, i+(2n+1)(t+3)\}, \\
                   u_2 & \text{if } l = i+1+\lambda(t+3) \text{ for some } \lambda\in\{0,2,4,\dots,2n\}, \\
                   u_2 & \text{if }  l = i+t+2+\lambda(t+3) \text{ for some } \lambda\in\{0,2,4,\dots,2n\}, \\
                   -u_2 & \text{if } l = i+1+\lambda(t+3)  \text{ for some } \lambda\in\{1,3,5,\dots,2n-1\}, \\
                   -u_2 & \text{if } l = i+t+2+\lambda(t+3) \text{ for some } \lambda\in\{1,3,5,\dots,2n-1\}, \\
                   \oo & \text{else.}
\end{cases} \]
That is,
\[ w_i = (\dots,\,\oo,\, \underset{i}{u_1},\, u_2, \,\oo,  \dots,\,\oo, \,u_2,\underset{i+t+3}{\oo},\,-u_2,\, \oo,\dots,\,\oo,\,-u_2,\underset{i+2(t+3)}{\oo},\,u_2,\,\oo,\dots \hspace{8mm} {} \]
\[   {}\hspace{3.4cm} \dots,\,\oo, \,-u_2,\underset{i+2n(t+3)}{\oo},\,u_2,\, \oo,\dots,\,\oo,\, u_2,\underset{i+(2n+1)(t+3)}{u_1}, \, \oo,\dots ). \]
 Let $T := (2n+1)(t+3)$.

Now consider $w_{i}$ and $w_{i-1}$ given by
 \[ \begin{array}{ccccccccccccccccccccc}
w_i&= &( \dots & \oo & \overset{i}{u_1} & u_2  &\dots  &\oo   & u_2 & \overset{i+t+3}{\oo} & -u_2 & \dots 
 &\oo & u_2 &\overset{i+T}{u_1} & \dots ), \\
 w_{i-1}&=& (\dots & u_1 & u_2 & {\oo} &              \dots & u_2 & \oo  & -u_2                 & \oo   & \dots 
  & u_2  & u_1& \oo &\dots).
\end{array} \]
Then
\[ f(w_{i},w_{i-1},c_3,\dots,c_k,\ooo) = v_{i,i+T-1}, \]
where in Case 2, we used that $T-1$ is odd, and hence $v_{i,i+T-1}(i+T-1)= + e = f(u_2, u_1, c_3, \dots, c_k, \oo)$.

 We have integers $p,q,r$ with $j-T+1\leq p < q < r \leq j-T+ 2|B|+1$ such that
 $w_p \equiv_\theta w_q \equiv_\theta  w_r$.
 Since $p+1 < r < p+t$, we have (similar to the argument for Lemma \ref{le:few})
  \[ \begin{array}{ccccccccccccccccccccc}
  w_{p-1}& = & ( \dots & u_1 & u_2               &    \oo   &               (\dots)  & \oo               & \oo                          & (\dots)  \\
  w_{r}  & = & (\dots & \oo & \underset{p}{\oo} & \underset{\le q}{\oo}  & (\dots)  &\underset{r}{u_1}  & \underset{\le (p-1)+t+1}{u_2} & (\dots) \\  \\
          &     & u_2                       & \oo &-u_2                     & (\dots) & \oo                  & \oo  & \oo                                   &(\dots)  \\
          &    & \underset{(p-1)+t+2}{\oo} & \oo & \underset{\le q+t+2}{\oo}& (\dots) & \underset{r+t+2}{u_2}&  \oo & \underset{ \le (p-1)+2(t+3)-2}{-u_2} & (\dots) \\  \\
          &    & u_2                       & u_1                      & (\dots) & \oo & \oo                 & \dots)                            \\
          &    & \underset{(p-1)+T-1}{\oo} & \underset{\le q+T-1}{\oo}& (\dots) & u_2& \underset{r+T}{u_1} & \dots ) \\  \\
\end{array} \]
 We observe that
\[ \forall i\in\Z\colon w_{p-1}(i) = \oo \text{ or } w_{r}(i) = \oo. \]
 Thus
\[ v_{p,p+T-1} = f(w_{p},w_{p-1},c_3,\dots,c_k,\ooo) \equiv_\theta f(w_{r},w_{p-1},c_3,\dots,c_k,\ooo) = \ooo. \]
 Since $p \leq j-T+2|B|-1\leq L$ and $v_{l,l+1}\equiv_\theta \ooo$ for all $l\leq L$, this yields
\[ v_{L,p+T-1} \equiv_\theta \ooo. \]
 Note that $p+T-1$ is in $J$. Since $v_{l,l+1}\equiv_\theta \ooo$ for all $l\in J$, we finally obtain
\[ v_{L,l} \equiv_\theta \ooo \text{ for all } l\in J. \]
\end{proof}

\begin{lem}
 Let $L$ be an even negative integer with $L < \min(I)$.
 Then
\[ |\{i\in\N \setsuchthat  v_{0,i} \not\equiv_\theta -v_{L,0} \}| \leq 2|B|(2|B|-1). \]
\end{lem}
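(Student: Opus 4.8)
The plan is to count, for each $i \in \mathbb{N}$, the set of indices $l \in \{0, 1, \dots, i-1\}$ at which $v_{l,l+1} \not\equiv_\theta \ooo$, i.e. to locate $i$ relative to the finite set $I$ from Lemma~\ref{le:few}, and then use the additive structure of the $v$'s together with Lemmas~\ref{le:few} and~\ref{le:large} to show that $v_{0,i}$ is $\theta$-congruent to the fixed element $-v_{L,0}$ for all but few values of $i$. First I would record the key identity coming from the definitions: in Case~1 we have $v_{0,i} = \sum_{l=0}^{i-1} v_{l,l+1}$ and $v_{L,i} = v_{L,0} +_{\ooo} v_{0,i}$ (with the obvious sign-twisted analogue in Case~2), so that $v_{0,i} \equiv_\theta -v_{L,0}$ is equivalent to $v_{L,i} \equiv_\theta \ooo$. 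Thus the statement reduces to bounding $|\{i \in \mathbb{N} : v_{L,i} \not\equiv_\theta \ooo\}|$.

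Next I would apply Lemma~\ref{le:large} repeatedly. Since $|I| \le 2|B|$ by Lemma~\ref{le:few}, the positive integers split into at most $2|B|+1$ maximal blocks on which $v_{l,l+1} \equiv_\theta \ooo$, separated by the at most $2|B|$ "bad" indices in $I$. On any block $J = \{j, \dots, j+2|B|-2\}$ of length exactly $2|B|-1$ with $I \cap J = \emptyset$, Lemma~\ref{le:large} gives $v_{L,l} \equiv_\theta \ooo$ for every $l \in J$. Since $L < \min(I)$, once we are past the last element of $I$ every sufficiently long run of good indices is covered; more precisely, every $i \in \mathbb{N}$ that lies at distance $\ge 2|B|-1$ to the left inside a good block (equivalently, such that $\{i-2|B|+2, \dots, i\}$ avoids $I$ and lies in $\mathbb{N}$, after first passing $\min(I)$ — or such that $i < \min(I)$ using the hypothesis $v_{l,l+1}\equiv_\theta \ooo$ for $l \le L$) satisfies $v_{L,i} \equiv_\theta \ooo$. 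The integers $i$ \emph{not} of this form are those within distance $2|B|-2$ to the right of some element of $I$ (or of $0$), of which there are at most $(|I|)(2|B|-1) \le 2|B|(2|B|-1)$; this is exactly the claimed bound $\nn = 2|B|(2|B|-1)$.

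The remaining care is bookkeeping: one must check that Lemma~\ref{le:large} is genuinely applicable, i.e. that its hypothesis "$v_{l,l+1} \equiv_\theta \ooo$ for all $l \le L$" is met (immediate since $L < \min(I)$), and that the block $J$ to which a given good $i$ belongs indeed has length at least $2|B|-1$ before reaching $i$ — which may fail only for the $\le 2|B|-2$ leftmost elements of each good block, i.e. precisely the exceptional set just counted. In Case~2 the only change is the alternating sign $(-1)^{l}$ in the definition of $v_{L,i}$ as a telescoping sum of the $v_{l,l+1}$, which does not affect congruence to $\ooo$ and hence does not affect the count. The main obstacle is simply organizing the interval combinatorics cleanly — partitioning $\mathbb{N}$ by the positions of $I$, and verifying that "all but the first $2|B|-1$ elements of each good block, plus possibly the elements before $\min(I)$" are controlled by Lemma~\ref{le:large} while the complement has size at most $2|B|(2|B|-1)$; there is no new algebraic content beyond Lemmas~\ref{le:few} and~\ref{le:large}.
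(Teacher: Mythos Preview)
Your proposal is correct and follows essentially the same route as the paper: reduce via the identity $v_{L,i}=v_{L,0}+_\o v_{0,i}$ (which holds verbatim in Case~2 precisely because $L$ is even, so the signs $(-1)^{l-L}$ and $(-1)^l$ agree --- this is the real reason, not merely that ``congruence to $\ooo$ is unaffected''), then apply the contrapositive of Lemma~\ref{le:large} and count using Lemma~\ref{le:few}. The paper streamlines your interval combinatorics by centering the window of length $2|B|-1$ at $i$ (giving $|i-j|\le |B|-1$ for some $j\in I$), which avoids your extraneous worry about indices near $0$; either choice yields the same bound $|I|(2|B|-1)\le 2|B|(2|B|-1)$.
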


\begin{proof}
 Let $i\in\N$. Note that
\[ v_{L,i} = v_{L,0}+v_{0,i}. \]
 Assume $v_{0,i} \not\equiv_\theta -v_{L,0}$. Then $v_{L,i}\not\equiv_\theta \ooo$.
 So, by Lemma~\ref{le:large}, we have
 $j\in I$ such that $|i-j| \leq |B|-1$.
 Together with Lemma~\ref{le:few} this yields the result.
\end{proof}

 Finally all assumptions of Lemma~\ref{le:nd} are satisfied for $\nn = 2|B|(2|B|-1)$ and $\D$ and $D_0$ as above.
 It follows that $\B$ is not dualizable. Theorem~\ref{th:sn} is proved.

\begin{proof}[Proof of Theorem~\ref{th:nd}]
 Let $\A$ be a finite non-abelian nilpotent algebra of finite type that splits into a direct product of
 prime power algebras. Then $\A$ is supernilpotent by~\cite[Lemma 7.6]{AM:SAHC}.
 Hence $\A$ satisfies the assumptions of Theorem~\ref{th:sn} with $\alpha := 1$, and the result follows.
\end{proof}

\section{A clone theoretic characterization of duality} \label{sec:clone}

 In the next section, we will give an example of a dualizable algebra that limits how far
 Theorem~\ref{th:sn} can be generalized. We will show duality by a novel approach using clone theory
 that was suggested by Willard in a conference talk~\cite{Wi:FUPC} and further developed
 by Davey, Pitkethly, and Willard in the paper~\cite{DP:LAE}.
 We note that in~\cite{DP:LAE} the authors
 develop an extension of the duality theory from~\cite{CD:NDWA}; see the appendix for a
 discussion of the clone theoretic approach in this new so-called {\em symmetric setting}
 and explanations about the (minor) differences between the theories.
 As explained in the appendix to~\cite{DP:LAE}, results stated in one setting can be readily
 translated to the corresponding results in the other setting.

 In the present section we state Willard's clone theoretic condition that is sufficient for 
 dualizability in Corollary~\ref{cor:frd}. For the convenience of the reader, we give a proof
 based on wellknown results from the book~\cite{CD:NDWA}, namely the Third Duality Theorem
 and the Duality Compactness Theorem; see the appendix for a development of the corresponding
 results in the symmetric setting of~\cite{DP:LAE}.

We briefly recall the definition of duality from~\cite{CD:NDWA}.
Let $\A_0$ be a finite
 algebra with universe $A_0$, and consider a topological structure
 $\undertilde{\A}_1 := \algop{A_0}{\mathcal{F},\mathcal{R},\tau}$ on
$A_0$, where $\mathcal{F}$ is a set of total or  partial operations, $\mathcal{R}$
 is a set of relations,
and $\tau$ is the discrete topology, such that each operation of $\A$ preserves
each relation $R \in \mathcal{R}$ and the graph of each $F \in \mathcal{F}$.
 Then  $\undertilde{\A}_1$ is called an \emph{alter ego} of $\A_0$.

The aim of duality theory is to find suitable choices of $\mathcal{F}$ and $\mathcal{R}$ in order to set up a
dual representation between
two different categories. One corresponds to the quasi-variety $\mathcal{A}:=\ISP{\A_0}$ generated by $\A_0$, consisting 
 of all isomorphic copies of subalgebras of powers of $\A_0$. The other corresponds to the topological quasi-variety $\mathcal{X}:=\ISCP{\undertilde{\A}_1}$
of isomorphic copies of closed substructures of powers of $\undertilde{\A}_1$, where powers are taken over non-empty index sets. The morphisms of the categories are the homomorphisms and continuous homomorphisms, respectively,
among their objects.

We can set up mappings $D: \mathcal{A} \rightarrow \mathcal{X}$ and
$E:\mathcal{X} \rightarrow \mathcal{A}$. For $\A \in \mathcal{A}$
let $D(\A)$ be the substructure of $\undertilde{\A}_1^A$ whose universe consists of all homomorphisms from
$\A$ to $\A_0$. For $\X \in \mathcal{X}$ let $E(\X)$ be the subalgebra of $\A_0^X$ whose universe consists of all
continuous homomorphisms from $\X$ to $\undertilde{\A}_1$ (we remark in passing that $D$ and $E$ can be extended to contravariant functors between $\mathcal{A}$ and $\mathcal{X}$).

Now for each $\A \in \mathcal{A}$ we have a natural embedding $e_{\A}: \A \rightarrow ED(\A)$ given
by evaluation, i.e. $e_{\A}(a)$ is given by $h \mapsto h(a)$ for each continuous homomorphism $h \in D(\A)$.

We say that $\undertilde{\A}_1$ {\em dualizes} $\A_0$ if $e_A$ is an isomorphism for
each $\A \in \mathcal{A}$. $\A_0$ is {\em dualizable} if there is a structure $\undertilde{\A}_1$ that dualizes
$\A_0$.  We say that $\undertilde{\A}_1$ {\em dualizes} $\A_0$ {\em at the finite level} if $e_A$ is an isomorphism for
each finite $\A \in \mathcal{A}$.

The next definitions will be used in providing a clone theoretic approach to dualizability.
 Let $\A$ be an algebra.
 A subset 
 $D$ of some finite power $A^k$ of $A$ is \emph{c.a.d. (conjunct-atomic definable }~\cite{DP:LAE})
 over $\A$ if it is definable by a conjunction of atomic formulas over $\A$. That is, $D$ is c.a.d.
 over $\A$ if there exist $f_1,\dots,f_l,g_1,\dots,g_l\in\Clo_k(\A)$ such that
\[ D = \{ x\in A^k \setsuchthat f_1(x) = g_1(x), \dots, f_l(x) = g_l(x) \}. \]
 Note that the empty set $\emptyset$ may be c.a.d. over $\A$.
 In~\cite[p. 66]{CD:NDWA} c.a.d. relations are called term closed.

 Let $R\subseteq A^n$, and for $D\subseteq A^k$ let $f$ be a partial operation $f\colon D\to A$. We can extend
  $f$ to a partial operation $f^{A^n}$ on $A^n$ by evaluating $f$ coordinatewise. 
   We say that $f$
 \emph{preserves} $R$ if
\[ \forall r_1,\dots, r_k\in R\colon f^{A^n}(r_1,\dots, r_k) \in R \text{ whenever defined.} \]

 We denote the set of all restrictions of term operations on $\A$ to c.a.d. domains by $\Clo_{cad}(\A)$.
 We say that the partial clone $\Clo_{cad}(\A)$ is \emph{finitely related} if there exist a finite set
 $\R=\{R_1,\dots,R_l\}$ of subuniverses of finite powers of $\A$ such that the following are equivalent
 for every partial operation $f$ on $A$ with c.a.d. domain over $\A$:
\begin{enumerate}
\item \label{it:cad1} $f$ preserves the relations $R_1,\dots,R_l$,
\item \label{it:cad2} $f\in\Clo_{cad}(\A)$.
\end{enumerate}
 We note that the implication~\eqref{it:cad2}~$\Rightarrow$~\eqref{it:cad1} is trivially satisfied
 because $R_1,\dots,R_l$ are subuniverses of powers of $\A$.

We will need the following variants of well known results.

\begin{thm}[cf. Third Duality Theorem, 3.1.6 in~\cite{CD:NDWA}]
Let $\R:=\{R_1, \dots, R_l\}$ be a finite set of relations on $A$, the universe of the finite algebra $\A$,
 and let $\undertilde{\A}:=\algop{A}{\emptyset,\R,\tau}$ be 
 with $\tau$ the discrete topology on $A$.
 The following are equivalent:
\begin{enumerate}
\item \label{it:tdt1}
 The structured space $\undertilde{\A}$ dualizes $\A$ at the finite level.
\item \label{it:tdt2}
 For any $k\in\N$ and any substructure $\undertilde{\D}$ of $\undertilde{\A}^k$ whose universe $D$ is c.a.d.
 over $\A$, every morphism $\undertilde{\D} \to \undertilde{\A}$ extends to a total $k$-ary term function on~ $\A$.
 \item \label{it:tdt3}
 $\Clo_{cad}(\A)$ is finitely related by the relations $R_1,\dots,R_l$.
\end{enumerate}
\end{thm}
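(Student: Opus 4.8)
The plan is to establish the cycle of implications
\eqref{it:tdt1}$\Rightarrow$\eqref{it:tdt2}$\Rightarrow$\eqref{it:tdt3}$\Rightarrow$\eqref{it:tdt1}.
The first two implications are essentially unpacking definitions, and the substance of the argument
lies in \eqref{it:tdt3}$\Rightarrow$\eqref{it:tdt1}, where the Duality Compactness Theorem of
\cite{CD:NDWA} is invoked to pass from duality at the finite level to the condition as stated.

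For \eqref{it:tdt1}$\Rightarrow$\eqref{it:tdt2}, suppose $\undertilde{\A}$ dualizes $\A$ at the finite
level. Given $k\in\N$ and a substructure $\undertilde{\D}$ of $\undertilde{\A}^k$ with universe $D$
that is c.a.d.\ over $\A$, note first that $D$, being a conjunction of atomic formulas, is a subuniverse
of $\A^k$, so $\D := \algop{D}{\ldots}$ (with the induced operations) is a finite algebra in
$\mathcal{A} = \ISP{\A}$; moreover the natural dual $D(\D)$ in the sense of duality theory coincides with
$\undertilde{\D}$ because the hom-set $\mathrm{Hom}(\D,\A)$ consists exactly of the $k$-tuples lying in
all the relations $R_i$, and these are precisely the restrictions of the $k$ coordinate projections
closed under the structure $\R$ --- here one uses that $D$ is c.a.d.\ to see that $\undertilde{\D}$ is
recovered as $D(\D)$. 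Since $e_{\D}$ is an isomorphism, $ED(\D)\cong\D$, which says exactly that every
continuous homomorphism (equivalently morphism) $\undertilde{\D}\to\undertilde{\A}$ is given by
evaluation at some element of $D$, i.e.\ arises as the restriction to $D$ of a term function on $\A^k$,
hence extends to a total $k$-ary term function on $\A$. The implication
\eqref{it:tdt2}$\Rightarrow$\eqref{it:tdt3} is then immediate from the definitions: condition
\eqref{it:tdt2} says precisely that every partial operation with c.a.d.\ domain that preserves all of
$R_1,\dots,R_l$ (equivalently, that is a morphism $\undertilde{\D}\to\undertilde{\A}$ on its domain) is
the restriction of a term operation, i.e.\ lies in $\Clo_{cad}(\A)$; the reverse implication
\eqref{it:cad2}$\Rightarrow$\eqref{it:cad1} is trivial since the $R_i$ are subuniverses of powers of
$\A$, so $\Clo_{cad}(\A)$ is finitely related by $R_1,\dots,R_l$.

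For \eqref{it:tdt3}$\Rightarrow$\eqref{it:tdt1}, assume $\Clo_{cad}(\A)$ is finitely related by
$R_1,\dots,R_l$. By the Duality Compactness Theorem \cite[Theorem 2.2.7 / 3.1.4]{CD:NDWA}, to show that
$\undertilde{\A} = \algop{A}{\emptyset,\R,\tau}$ dualizes $\A$ at the finite level it is enough to check,
for each finite $\A'\in\mathcal{A}$ --- equivalently, for each $k\in\N$ and each subuniverse $D$ of
$\A^k$ --- that $e_{\A'}$ is surjective, i.e.\ that every morphism $\undertilde{D(\A')}\to\undertilde{\A}$
is an evaluation. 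Writing $\A' = \D$ for such a $D$, one checks again that $D(\D) = \undertilde{\D}$ as a
substructure of $\undertilde{\A}^k$ (using that a subuniverse of $\A^k$ is in particular c.a.d.\ over
$\A$, being the solution set of the conjunction of the equations $p(x)=q(x)$ over all pairs of term
operations agreeing on $D$ --- or more economically, any subuniverse of a finite power of a finite
algebra is c.a.d.). A morphism $\psi\colon\undertilde{\D}\to\undertilde{\A}$ is then a partial operation
$\psi\colon D\to A$ with c.a.d.\ domain $D\subseteq A^k$ that preserves each $R_i$; by the finite
relatedness hypothesis, $\psi\in\Clo_{cad}(\A)$, so $\psi$ is the restriction to $D$ of a $k$-ary term
function $t$ on $\A$, which means $\psi$ is evaluation at the element $t(\pi_1,\dots,\pi_k)\in D$. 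Hence
$e_{\D}$ is onto, and since $e_{\D}$ is always an embedding, it is an isomorphism. This gives duality at
the finite level.

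\textbf{Main obstacle.} The delicate point is the identification, in both directions, of the abstract
dual structure $D(\D)$ with the concrete substructure $\undertilde{\D}$ of $\undertilde{\A}^k$, together
with the observation that subuniverses of finite powers of a finite algebra are automatically c.a.d.;
this is what lets the c.a.d.-indexed condition \eqref{it:tdt3} feed into the Duality Compactness Theorem,
whose hypotheses are phrased in terms of arbitrary finite algebras in the quasi-variety. Once these
bookkeeping identifications are in place, each implication is a short unwinding of definitions plus one
citation (to the Duality Compactness Theorem for the last step).
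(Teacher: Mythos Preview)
Your argument for \eqref{it:tdt2}$\Rightarrow$\eqref{it:tdt3} is correct and matches the paper's. However, your treatments of \eqref{it:tdt1}$\Rightarrow$\eqref{it:tdt2} and \eqref{it:tdt3}$\Rightarrow$\eqref{it:tdt1} contain a genuine error: you repeatedly identify $D(\D)$ with $\undertilde{\D}$, but these live in different powers of $\undertilde{\A}$. For $\D\leq\A^k$, the dual $D(\D)=\mathrm{Hom}(\D,\A)$ is a substructure of $\undertilde{\A}^{D}$ (indexed by the elements of $D$), whereas $\undertilde{\D}$ is the substructure of $\undertilde{\A}^k$ with universe $D$. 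An element of $ED(\D)$ is therefore a partial operation of arity $|D|$ with domain $\mathrm{Hom}(\D,\A)\subseteq A^{|D|}$, not a $k$-ary partial operation with domain $D\subseteq A^k$. Your \eqref{it:tdt3}$\Rightarrow$\eqref{it:tdt1} can be salvaged by working in arity $|D|$ instead (noting that $\mathrm{Hom}(\D,\A)$ is c.a.d.\ in $A^{|D|}$, being cut out by the homomorphism equations, and then applying \eqref{it:tdt3} there), but as written the step ``$\psi$ is a partial operation $D\to A$ with c.a.d.\ domain $D\subseteq A^k$'' is not what membership in $ED(\D)$ gives you. Your citation of the Duality Compactness Theorem in this step is also misplaced: that result upgrades finite-level duality to full duality and plays no role in establishing \eqref{it:tdt1} itself.

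The paper takes a shorter route: it invokes the Third Duality Theorem~3.1.6 of~\cite{CD:NDWA} directly for \eqref{it:tdt1}$\Leftrightarrow$\eqref{it:tdt2} (this is exactly what that theorem asserts, at the finite level, for a purely relational alter ego), and then observes that \eqref{it:tdt2}$\Leftrightarrow$\eqref{it:tdt3} is essentially a restatement, since a morphism $\undertilde{\D}\to\undertilde{\A}$ is precisely a partial operation with c.a.d.\ domain preserving $R_1,\dots,R_l$, and it extends to a total term operation iff it lies in $\Clo_{cad}(\A)$.
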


\begin{proof}
\eqref{it:tdt1}~$\Leftrightarrow$~\eqref{it:tdt2} is a special case of the finite level case of the Third
 Duality Theorem  3.1.6 in~\cite{CD:NDWA}.
 For~\eqref{it:tdt2}~$\Rightarrow$~\eqref{it:tdt3} consider a partial operation $f\colon D\to A$ with
 domain $D\subseteq A^k$ c.a.d. over $\A$ such that $f$ preserves the relations in $\R$. Then $D$
 induces a substructure $\undertilde{\D}$ of $\undertilde{\A}^k$. As $D$ is finite and $\tau$ discrete,
and $f$ preserves all relations $R$ in $\R$,
 $f$ is 
 actually a morphism from $\undertilde{\D}$ to $\undertilde{\A}$ in the sense of~\cite[p. 21]{CD:NDWA}. By~\eqref{it:tdt2} $f$ extends
 to a total term operation on $\A$, that is, $f\in\Clo_{cad}(\A)$. Hence we have~\eqref{it:tdt3}.

 The converse implication~\eqref{it:tdt3}~$\Rightarrow$~\eqref{it:tdt2} follows similarly:
 Let $\undertilde{\D}$ be a substructure of $\undertilde{\A}^k$ whose universe is c.a.d. over $\A$,
 and let $f\colon\undertilde{\D}\to\undertilde{\A}$ be a morphism. Then $f$ preserves the relations $R_1,\dots,R_l$
 and $f\in\Clo_{cad}(\A)$ by~\eqref{it:tdt3}. Thus $f$ extends to a term operation on $\A$, and
 we have~\eqref{it:tdt2}.
 $\Clo_{cad}(\A)$. No
\end{proof}

\begin{thm}[Duality Compactness Theorem, 2.2.11 in~\cite{CD:NDWA}; independently Willard~\cite{Wi:NTFP}, Zadori~\cite{Za:NDVA}]
Let $\A$ be a finite algebra. If the structure $\undertilde{\A}$ is of finite type and dualizes $\A$ at the finite level, then
$\undertilde{\A}$ dualizes~$\A$.
\end{thm}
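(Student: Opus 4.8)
The plan is a standard directed-colimit (compactness) argument that localizes the failure of a duality to a finite algebra. Fix $\B\in\ISP{\A}$; since $e_\B\colon\B\to ED(\B)$ is always an embedding, it suffices to show it is onto. First I would write $\B$ as the directed colimit $\B=\varinjlim_i\B_i$ of its finitely generated subalgebras: because $\A$ is finite each $\B_i$ is finite (a subalgebra of a power of $\A$ generated by $n$ elements embeds into $\A^{A^n}$), and the bonding maps are the inclusions. Then I would check that $D$ turns this colimit into an inverse limit of topological structures, $D(\B)=\varprojlim_i D(\B_i)$: a homomorphism $\B\to\A$ is precisely a compatible family of homomorphisms $\B_i\to\A$, while the product topology on $\undertilde{\A}^B$ and preservation of each relation and (partial) operation of $\undertilde{\A}$ are checked coordinatewise, hence stage by stage. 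Passing to the inverse system of stable images, I may assume all bonding maps $D(\B_i)\to D(\B_j)$ (for $\B_j\le\B_i$) are surjective.

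The heart of the argument, and the step I expect to be the main obstacle, is to show that $E$ sends this inverse limit back to a directed colimit, $ED(\B)=\varinjlim_i ED(\B_i)$, with bonding maps induced by $D$ and $E$; this is precisely where finiteness of the type of $\undertilde{\A}$ is used. Each $D(\B_i)$ is finite and discrete, so $D(\B)$ is a Boolean space, and a continuous map $h\colon D(\B)\to\undertilde{\A}$ into the finite discrete structure $\undertilde{\A}$ is locally constant; by compactness it factors as $h=\bar h\circ p_i$ through some finite stage $D(\B_i)$, with $\bar h\colon D(\B_i)\to A$ well defined since $p_i$ is onto. The delicate point is that, after enlarging $i$, the map $\bar h$ is again a \emph{morphism}: preservation of each of the finitely many operations and relations of $\undertilde{\A}$ is a condition on finitely many finite tuples, each of which lifts back to $D(\B)$ (again passing to stable images so that the relevant relation of $D(\B)$ surjects onto that of $D(\B_i)$), and on these lifts $h$ already preserves the structure. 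Finiteness of the type is indispensable here, since infinitely many operations or relations could not be routed through a single finite stage; the matching injectivity of the canonical map $\varinjlim_i ED(\B_i)\to ED(\B)$ follows from the same compactness facts.

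Finally, each $e_{\B_i}\colon\B_i\to ED(\B_i)$ is an isomorphism because $\undertilde{\A}$ dualizes $\A$ at the finite level, and by naturality of the evaluation transformation the squares relating $e_{\B_i}$, $e_\B$ and the inclusions $\B_i\hookrightarrow\B$ commute. Hence $e_\B=\varinjlim_i e_{\B_i}$ with respect to the presentations $\B=\varinjlim_i\B_i$ and $ED(\B)=\varinjlim_i ED(\B_i)$ established above, and a directed colimit of isomorphisms of algebras is an isomorphism; so $e_\B$ is an isomorphism. Since $\B\in\ISP{\A}$ was arbitrary, $\undertilde{\A}$ dualizes $\A$.
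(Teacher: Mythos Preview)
The paper does not give its own proof of this theorem: it is quoted as a known result with references to Clark--Davey~\cite{CD:NDWA}, Willard~\cite{Wi:NTFP}, and Z\'adori~\cite{Za:NDVA}, and then immediately combined with the Third Duality Theorem to obtain Corollary~\ref{cor:frd}. So there is nothing in the paper to compare your argument against.

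That said, your sketch is the standard inverse-limit/compactness proof and is essentially correct. The one place worth tightening is the claim that, after enlarging $i$, the factored map $\bar h\colon D(\B_i)\to A$ is a morphism. You appeal to ``passing to stable images'' for surjectivity of the bonding maps on the \emph{underlying sets}, but to verify that $\bar h$ preserves a relation $R$ you need surjectivity of $R^{D(\B)}\to R^{D(\B_i)}$, not just of $D(\B)\to D(\B_i)$. The clean way around this is to note that for each relation or operation-graph $R$ of $\undertilde{\A}$, the interpretation $R^{D(\B)}$ is itself the inverse limit of the finite sets $R^{D(\B_i)}$ (relations are computed coordinatewise in $\undertilde{\A}^B$), so the projections $R^{D(\B)}\to R^{D(\B_i)}$ are eventually onto the stable images there as well; with only finitely many $R$'s this lets you choose a single $i$ that works for all of them. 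Your diagnosis that finite type is exactly what makes this step go through is correct.
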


 Combining the above results (equivalently, combining the version of Duality Lemma 4.1
 from~\cite{DP:LAE} for the so-called \emph{usual setting} with the Duality Compactness
 Theorem), we immediately get:

\begin{cor}[Willard~\cite{Wi:FUPC}] \label{cor:frd}
 Let $\A$ be a finite algebra. If $\Clo_{cad}(\A)$ is finitely related by relations $R_1,\dots,R_l$,
 then $\A$ is dualized by the finitary structure
 $\undertilde{\A} := \algop{A}{\emptyset,\{R_1,\dots,R_l\},\tau}$ with $\tau$ the discrete topology on $A$. 
\end{cor}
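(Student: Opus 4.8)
The plan is to obtain Corollary~\ref{cor:frd} purely by combining the two theorems that immediately precede it, so the proof is essentially a two-line deduction rather than a fresh argument. First I would note that the hypothesis ``$\Clo_{cad}(\A)$ is finitely related by $R_1,\dots,R_l$'' is precisely condition~\eqref{it:tdt3} of the variant Third Duality Theorem stated above, applied to the structure $\undertilde{\A}:=\algop{A}{\emptyset,\{R_1,\dots,R_l\},\tau}$. By that theorem, \eqref{it:tdt3}~$\Rightarrow$~\eqref{it:tdt1}, so $\undertilde{\A}$ dualizes $\A$ at the finite level.

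Second, I would observe that $\undertilde{\A}$ has \emph{finite type}: its only nonlogical symbols are the finitely many relation symbols $R_1,\dots,R_l$ (and no operations), so the Duality Compactness Theorem applies. Invoking that theorem to the structure $\undertilde{\A}$, which dualizes $\A$ at the finite level, we conclude that $\undertilde{\A}$ dualizes $\A$ outright. That is exactly the assertion of the corollary.

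Since the excerpt already flags that one may alternatively ``combine the version of Duality Lemma~4.1 from~\cite{DP:LAE} for the usual setting with the Duality Compactness Theorem,'' I would add a parenthetical remark to that effect for readers working in the symmetric setting, but the self-contained route through the two displayed theorems above is cleaner and requires no further appeal to~\cite{DP:LAE}. There is no real obstacle here: the corollary is a formal consequence, and the only thing to be careful about is recording explicitly that the alter ego in question is of finite type (needed to license the Duality Compactness Theorem) and that its relational part is exactly $\{R_1,\dots,R_l\}$ so that the Third Duality Theorem applies with the same list of relations. The substantive content — proving \eqref{it:tdt2}~$\Leftrightarrow$~\eqref{it:tdt3} and reducing \eqref{it:tdt1} to the finite level — has already been done in the two theorems above and in the cited sources, so nothing more is needed.
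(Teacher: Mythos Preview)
Your proposal is correct and matches the paper's own argument exactly: the paper simply writes ``Combining the above results \dots\ we immediately get'' before stating the corollary, which is precisely the two-step deduction (Third Duality Theorem variant for finite-level duality, then Duality Compactness for full duality) that you spell out. Your added care in noting that $\undertilde{\A}$ is of finite type is the only point worth making explicit, and you have done so.
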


\section{Nilpotent and dualizable} \label{sec:dualizable}

 We give an example that shows that in
 Theorem~\ref{th:sn} the condition that $\A$ has a non-abelian, supernilpotent congruence cannot
 be removed. In particular dualizable non-abelian nilpotent Mal'cev algebras exist.

 Throughout the remainder of this section let
$$\A := \algop{\Z_4}{+, 1, \{ 2x_1\dots x_k \setsuchthat k\in\N \}}.$$
 We note that $\A$ is nilpotent with center $\equiv_2$ but not supernilpotent. Observe that every
 reduct of finite type of $\A$ with group operation
  is inherently non-dualizable by Theorem~\ref{th:nd}.
 Still we have the following result:

\begin{thm} \label{th:inf}
 $\A := \algop{\Z_4}{+, 1, \{ 2x_1\dots x_k \setsuchthat k\in\N \}}$ is dualizable by the alter ego
$\undertilde{\A}:= \algop{A}{\emptyset,\R,\tau}$, where $\R$ is the set of
all subuniverses of $\A^4$.
\end{thm}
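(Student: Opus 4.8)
The strategy is to verify the hypothesis of Corollary~\ref{cor:frd}: it suffices to show that $\Clo_{cad}(\A)$ is finitely related, specifically by the (finite) set $\R$ of all subuniverses of $\A^4$. One direction is trivial, so the real content is to show that any partial operation $f\colon D\to\Z_4$, with $D\subseteq\Z_4^k$ c.a.d.\ over $\A$, that preserves every subuniverse of $\A^4$ must be (the restriction of) a term operation of $\A$. The plan is to decompose an arbitrary $f$ according to a normal form for term operations of $\A$ and argue that the ``coefficient data'' of $f$ is forced by the preservation conditions on $4$-ary subuniverses.

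\textbf{Key steps.} First I would record a normal form for $\Clo(\A)$: since $\A$ expands $\algop{\Z_4}{+}$ and every extra basic operation $2x_1\cdots x_k$ lands in the central ideal $2\Z_4\cong\Z_2$, Lemma~\ref{le:csum} (with $m(a,z,b)=a-z+b$) tells us that every $g\in\Clo_{k}(\A)$ can be written as an affine $\Z_4$-combination $g(x)=c_0+\sum_i c_ix_i$ plus a sum of ``commutator'' terms $c_S(x_S)$ taking values in $2\Z_4$; and each such commutator term on $\A$ is, modulo the center being $\Z_2$, a multilinear alternating $\Z_2$-valued form in the variables indexed by $S$, hence expressible using the operations $2x_1\cdots x_{|S|}$ together with $+$. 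Concretely, a $k$-ary term operation of $\A$ is determined by: residues mod $4$ for the affine part, plus a $\Z_2$-valued symmetric-difference-free (i.e.\ square-free multilinear) form. Second, I would observe that all of this data is ``read off'' from behaviour on few coordinates: the affine part from $2$ points, and each multilinear $\Z_2$-form from its restriction to $\{0,1\}$-valued tuples supported on the relevant index set. Third, and this is the crux, I would show that a $D$-partial $f$ preserving all subuniverses of $\A^4$ agrees with such a term $g$ on all of $D$. The mechanism: for any $x\in D$ one produces, using the c.a.d.\ definition of $D$ and closure of $D$ under the term operations of $\A$, a suitable subuniverse $R\le\A^4$ (generated by a handful of columns built from $x$ and the ``test points'' above) that witnesses $f(x)=g(x)$; preservation of $R$ by $f$ forces the value. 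One must check separately that $f$ is compatible with the affine structure (this uses binary and ternary subuniverses, e.g.\ the graph of $+$ sitting inside a subuniverse of $\A^4$ via padding) and that the $\Z_2$-valued correction terms of $f$ are multilinear and alternating --- here Lemma~\ref{le:ma} is the model for the computation, and preservation of the $4$-ary subuniverses $M_\A(\cdot,\cdot)$-type relations of Lemma~\ref{le:c2} supplies exactly the centralizer identities needed. Finally, one concludes $f\in\Clo_{cad}(\A)$ and invokes Corollary~\ref{cor:frd}.

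\textbf{Main obstacle.} The delicate point is the bound: why do subuniverses of $\A^{\mathbf{4}}$ (rather than of some unbounded power) suffice to pin down $f$. The affine part over $\Z_4$ and binary operations like $+$ are naturally captured at arity $\le 3$, and the central part is $\Z_2$-valued where multilinearity and the alternating law are ``local'' conditions testable on a bounded number of columns; the count of $4$ should come from needing to simultaneously encode (i) two evaluation points, (ii) the group graph, and (iii) a commutator/centralizer relation of the $M_\A(\alpha,\beta)$ form, each of which is expressible inside a power of exponent $\le 4$. Making precise that an arbitrary c.a.d.\ domain $D$ --- not just a subuniverse --- carries enough ``internal'' $4$-ary subuniverses to run this argument at every point of $D$, and that no higher-arity obstruction appears because the center is abelian of exponent $2$ (so all higher commutators beyond the multilinear ones vanish), is where the real work lies; the supernilpotence-failure of $\A$ is precisely what makes the relevant clone genuinely infinite yet still finitely related.
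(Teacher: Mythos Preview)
Your overall strategy---verify that $\Clo_{cad}(\A)$ is finitely related by the subuniverses of $\A^4$ and then invoke Corollary~\ref{cor:frd}---is exactly what the paper does. However, the concrete machinery you propose contains a real misstep and misses two ingredients that the paper's argument actually hinges on.

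First, your description of the ``residual'' part of a term operation as a \emph{multilinear alternating} $\Z_2$-valued form is incorrect, and invoking Lemma~\ref{le:ma} here is a mistake: that lemma requires a $k$-supernilpotent congruence, and the entire point of this example is that $\A$ is \emph{not} supernilpotent. Concretely, $2x_1x_2$ evaluated at $x_1=x_2=1$ gives $2\neq 0$, so the commutator terms are not alternating. The correct normal form (the paper's Lemma~\ref{le:term}) is: affine part plus a sum of functions $c_v$ that are characteristic functions of cosets of $2\Z_4^k$---equivalently, an arbitrary function of the residues $x_i\bmod 2$ with values in $2\Z_4$. This is what you should be reconstructing, not a multilinear form.

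Second, the paper's proof rests on a structural description of c.a.d.\ domains (Lemma~\ref{le:cad}): any c.a.d.\ $D\ni 0$ is a union $\bigcup_i (v_i+U)$ of cosets of a subgroup $U\le 2\Z_4^k$, with $2v_i\in U$. This is what makes the argument go: one $4$-ary relation $M=\{(a,b,a+2c,b+2c)\}$ forces $f(x+u)=f(x)+f(u)$ for $u\in U$, so $f|_U$ is linear; subtracting that off leaves a function $g$ constant on each coset $v_i+U$. Then---and this is the reason ``$4$'' suffices, which your count misses entirely---since $|A|=4$, the set $\Clo_1(\A)$ is itself a subuniverse of $\A^4$; preserving it forces the unary function $x\mapsto g(xv_i)$ to be a term, which pins $g(v_i)\in 2\Z_4$. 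Your proposed justification for the bound $4$ (encoding group graph plus commutator relations) does not capture this and would not, as stated, yield a finite bound.
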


 Before proving the theorem we describe the term operations on $\A$.
 For $k\in\N$ and $v\in\Z_4^k$, define
\[ c_v\colon \Z_4^k\to \Z_4, x\mapsto \begin{cases} 2 & \text{if } x\in v+2\Z_4^k, \\
                                                   0 & \text{else.} \end{cases} \]

\begin{lem} \label{le:term}
 Let  $f\colon \Z_4^k\to \Z_4$ with $f(0,\dots, 0) = 0$. Then $f\in\Clo_k(\A)$ iff
 $\exists \lambda_1,\dots,\lambda_k\in\Z_4, \exists v_1,\dots, v_l$ $ \in \Z_4^k$ such that $\forall x\in\Z_4^k\colon$
\begin{equation} f(x) = \sum_{i=1}^k \lambda_i x_i + c_{v_1}(x)+\dots+c_{v_l}(x). \label{mod} \end{equation}
\end{lem}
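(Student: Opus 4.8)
\textbf{Proof plan for Lemma~\ref{le:term}.}

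The plan is to prove both directions explicitly. For the ``if'' direction I would simply observe that each $c_v$ is a term operation on $\A$: writing $v = (v_1,\dots,v_k)$ and using the fundamental operations $1$ and $\{2x_1\cdots x_k\}$, one checks that $c_v(x)$ equals $2\prod_{i=1}^k (1 - (x_i - v_i)^2)$ expanded appropriately, or more directly that $2y^2 = 2y$ on $\Z_4$ while $2y = 0$ iff $y \in 2\Z_4$, so the product over $i$ of suitable copies of $(1 + (-1)x_i + (\text{constant}))$ picks out exactly the coset $v + 2\Z_4^k$ after multiplication by $2$. Since each $\lambda_i x_i$ is obviously a term (iterated $+$ and the constant $1$ are not even needed here), the whole right-hand side of~\eqref{mod} is in $\Clo_k(\A)$, and it sends $0$ to $0$ provided we are careful with the constant term; because we assumed $f(0,\dots,0)=0$ this is consistent.

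For the ``only if'' direction I would use the structural description of term operations on nilpotent Mal'cev algebras from Lemma~\ref{le:csum}. Since $\A$ is nilpotent with $(1,1]^2 = 0$ (the center is $\equiv_2$ and $[\equiv_2, 1] = 0$, while $[1,1] = \equiv_2$), every $f \in \Clo_k(\A)$ with $f(0,\dots,0)=0$ can be written, fixing $z = 0$, as $f(x) = \sum_{\emptyset \neq S \subseteq \underline k} c_S(x_S)$ where each $c_S$ is a commutator term of arity $|S|+1$ evaluated at $z=0$ and the sum is the group sum $+$ on $\Z_4$ (which is $+_0$). The singleton parts $c_{\{i\}}(x_i)$ are unary term operations on $\A$ fixing $0$; a unary term operation on $\langle \Z_4; +, 1, \dots\rangle$ fixing $0$ must be $x \mapsto \lambda_i x$ for some $\lambda_i \in \Z_4$ (the only unary polynomial maps are affine, and the $2x_1\cdots x_k$ with all arguments equal to a single variable give $2x = 2x^2$, which is $2x$, already affine). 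The parts with $|S| \geq 2$ are commutator terms into the abelian congruence $\equiv_2$, so they take values in $\{0,2\}$, and by multilinearity and the alternating-type identities from Lemma~\ref{le:ma} (applicable since $\equiv_2$ is $1$-supernilpotent, hence each such commutator of arity $\geq 3$ with a repeated variable... here one must be slightly careful since the relevant supernilpotence bound is $k=1$ for $\equiv_2$, forcing all commutators of rank $\geq 2$ on $\equiv_2$ to vanish, but the commutators $c_S$ here have their ``last'' slot in $1$, not $\equiv_2$) each $c_S(x_S, 0)$ is a $\mathbb{Z}_2$-multilinear function of the reductions $x_i \bmod 2$. A $\mathbb{Z}_2$-multilinear function $\{0,2\}^{|S|} \to \{0,2\}$ is a $\mathbb{Z}_2$-linear combination of the monomials $\prod_{i \in S} (x_i \bmod 2)$, and $2\prod_{i\in S} x_i$ is exactly (a scalar multiple of) the indicator-type function that, summed over the relevant $S$ and shifted by constants to account for lower-order terms, reassembles into a $\mathbb{Z}_2$-linear combination of coset indicators $c_v$. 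Collecting the affine part into $\sum \lambda_i x_i$ and the rest into a sum of $c_v$'s yields~\eqref{mod}.

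The main obstacle I expect is the bookkeeping in converting the multilinear commutator parts $c_S(x_S,0)$ into a linear combination of the coset indicators $c_{v}$: one must verify that the $\mathbb{Z}_2$-span of the functions $\{x \mapsto 2\prod_{i \in S} x_i : S \subseteq \underline k\}$ together with constants equals the $\mathbb{Z}_2$-span of $\{c_v : v \in \Z_4^k\}$ modulo the affine terms, and that absorbing the constants back is consistent with $f(0,\dots,0)=0$. This is a finite linear-algebra fact over $\mathbb{F}_2$ (a change of basis between the ``monomial'' basis and the ``point-indicator'' basis on the Boolean cube $(\Z_4/2\Z_4)^k$), but stating it cleanly and matching it to the form~\eqref{mod} — in particular checking that every commutator term operation on $\A$ indeed factors through the reductions mod $2$ in all but possibly one slot, which is where nilpotence and $(1,1]^2 = 0$ are used — requires care. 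An alternative that sidesteps Lemma~\ref{le:csum} is a direct induction on $k$: split $f(x_1,\dots,x_k) = f(x_1,\dots,x_{k-1},0) +_0 g(x_1,\dots,x_k)$ where $g$ vanishes whenever $x_k = 0$, apply the inductive hypothesis to the first summand, and analyze $g$ using that $x_k \mapsto f(\dots,x_k)$ differs from a constant by something in $\equiv_2$ (since $[\equiv_2,1]=0$ forces the ``derivative'' in $x_k$ to be central); I would likely present whichever of these two routes turns out shorter after checking the details.
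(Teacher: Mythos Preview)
Your approach can be made to work, but it takes a substantially longer route than the paper, and the appeal to Lemma~\ref{le:ma} is a genuine gap.

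For the ``only if'' direction the paper gives a two-line argument that uses none of the commutator machinery from Section~\ref{sec:nilpotent}. By a trivial induction on term structure, every term operation on $\A$ sending $0$ to $0$ has the form $\sum_i \lambda_i x_i + g(x)$ where $g$ is a sum of expressions $2x_{j_1}\cdots x_{j_n}$: the basic operations are $+$, the constant $1$, and $2y_1\cdots y_m$, and substituting terms already of this shape into $2y_1\cdots y_m$ kills every summand carrying a factor $2$ (since $2\cdot 2 = 0$ in $\Z_4$), leaving $2$ times a product of affine expressions, which expands back into the stated shape. Each $2x_{j_1}\cdots x_{j_n}$ satisfies $g(x+y)=g(x)$ for all $y\in 2\Z_4^k$, so $g$ is constant on cosets of $2\Z_4^k$ with values in $\{0,2\}$ and is therefore literally $\sum\{c_v : g(v)=2\}$. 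That is the whole proof.

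Your detour through Lemma~\ref{le:csum} runs into the problem that Lemma~\ref{le:ma} does not apply here: it requires $\alpha$ to be $|S|$-supernilpotent with all $(x_i,z)\in\alpha$. Taking $\alpha=1$ fails because $\A$ is \emph{not} supernilpotent---that is precisely the point of this example---and taking $\alpha$ to be the center $\equiv_2$ only constrains even arguments. You flag this yourself but do not resolve it. In fact you do not need multilinearity at all. Once Lemma~\ref{le:c2} gives $c_S(x_S,0)\in\{0,2\}$ for $|S|\ge 2$, what you actually need is that $c_S(x_S,0)$ depends on $x_S$ only modulo $2$; this follows from centrality of $\equiv_2$ together with the commutator property (fix some other coordinate at $0$ to see $c_S(\dots,x_j,\dots,0)=c_S(\dots,x_j+2,\dots,0)$ there, then propagate by the term condition for $[1,\equiv_2]=0$). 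Then $\sum_{|S|\ge 2}c_S(x_S,0)$ is a map $\Z_4^k/2\Z_4^k\to\{0,2\}$ and hence trivially a sum of coset indicators $c_v$. Note finally that your justification of the unary case---``unary term operations fixing $0$ are $x\mapsto\lambda x$''---is already the $k=1$ instance of the paper's direct argument; carrying that same induction out for general $k$ is no harder and renders Lemma~\ref{le:csum} unnecessary.
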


\begin{proof}
 Any $f \in\Clo_k(\A)$ with $f(0,\dots,0)=0$ can be written as
$$  f(x) = \sum_{i=1}^k \lambda_i x_i +g(x),$$
 where $g(x)$ is a sum of terms
 $2x_{j_1}\dots x_{j_n}$ for some $n\in\N$ and $j_1,\dots,j_n\in\{1,\dots,k\}$.
 It follows that $g(x)=g(x+y)$ for all $y \in 2\Z_4^k$. Pick a set $v_i$
of representatives for those residue classes of $\Z_4^k \mod 2 \Z_4^k$ which are mapped to $2$ by $g$. Expression
(\ref{mod}) follows.

Conversely, let $f$ be of the form \eqref{mod}. If $v_i=(w_1, \dots, w_k)$, then $\forall x_1,\dots,x_k\in A$
 $$c_{v_i}(x_1,\dots,x_k)=2(1+x_1+w_1)\cdots(1+x_k+w_k). $$
 Hence all $c_{v_i}$ are in $\Clo_k(\A)$, and hence $f \in \Clo_k(\A)$.
\end{proof}

 Next we determine the c.a.d. relations over $\A$.

\begin{lem}  \label{le:cad}
 Let $D\subseteq \Z_4^k$ with $(0,\dots,0)\in D$, let $B := 2\Z_4^k$.
 Then $D$ is c.a.d. over $\A$ if and only if there exists a subgroup $U$ of $\algop{B}{+}$ and
 $v_1,\dots, v_l \in \Z_4^k$ such that $v_i - v_j \not\in B$ whenever $i\neq j$, $2v_i\in U$ for all
 $i$, and
\begin{equation} \label{eq:cadD}
 D  = v_1+U \cup\dots\cup v_l+U.
\end{equation}
\end{lem}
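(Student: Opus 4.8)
I would derive the lemma from a normal form for the term operations of $\A$ furnished by Lemma~\ref{le:term}. Throughout, write $\bar x$ for the image of $x$ in $\Z_4^k/B$, identify $\Z_4^k/B$ with $\Z_2^k$, and fix the group isomorphism $\Z_2^k\to B$ sending $\bar x$ to $2x$; under it a subgroup $U\le B$ corresponds to a subspace $V_0\le\Z_2^k$, and one records the elementary fact that, for $b\in B$, one has $b\in U$ if and only if $\sum_j\varepsilon_j b_j=0$ in $\Z_4$ for every $\varepsilon\in\{0,1\}^k$ whose associated $\Z_2$-form $\sum_j\varepsilon_j(\cdot)$ vanishes on $V_0$. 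Since $z\mapsto -z$ (it is $3z$) and the constant $0$ are term operations of $\A$, a subset $D\subseteq\Z_4^k$ is c.a.d.\ over $\A$ exactly when $D=\bigcap_{j=1}^m\{x\setsuchthat h_j(x)=0\}$ for some $h_1,\dots,h_m\in\Clo_k(\A)$, and since $(0,\dots,0)\in D$ necessarily $h_j(0,\dots,0)=0$. By Lemma~\ref{le:term}, after reducing the linear coefficients of $h_j$ modulo $2$ and absorbing the resulting summands $2x_i$ (which depend only on $\bar x$) into the residue part, each $h_j$ takes the form $h_j(x)=\sum_{i\in S_j}x_i+\Psi_j(\bar x)$ with $S_j\subseteq\{1,\dots,k\}$ and $\Psi_j\colon\Z_2^k\to\{0,2\}$, $\Psi_j(0)=0$; conversely any such $\Psi_j$ is a sum of functions $c_v$ and hence gives a term operation.

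For the forward implication, given $D=\bigcap_j\{h_j=0\}$ with the $h_j$ in this form, set $U:=\bigcap_j\{b\in B\setsuchthat\sum_{i\in S_j}b_i=0\}$, a subgroup of $B$. Whenever $D$ meets the coset of residue $r$, choose $v_r\in D$ with $\bar v_r=r$; for $\bar x=r$ one has $h_j(x)=h_j(v_r)+\sum_{i\in S_j}(x-v_r)_i$, so $x\in D$ iff $x-v_r\in U$, i.e.\ $D\cap(v_r+B)=v_r+U$; taking $v_0=(0,\dots,0)$ gives $D\cap B=U$. Using $\Psi_j(0)=0$ and $h_j(v_r)=0$ one computes $h_j(2v_r)=2\sum_{i\in S_j}(v_r)_i=-2\Psi_j(r)=0$, so $2v_r\in D\cap B=U$. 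Thus $D$ is the union of the finitely many cosets $v_r+U$, lying in pairwise distinct cosets of $B$ and satisfying $2v_r\in U$, which is precisely~\eqref{eq:cadD}.

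For the backward implication, let $U$ and $v_1,\dots,v_l$ be as in the statement. As $0\in D=\bigcup_i(v_i+U)$, some $v_i$ lies in $B$, and after reindexing and replacing that representative we may assume $v_1=(0,\dots,0)$; put $W:=\{\bar v_1,\dots,\bar v_l\}\subseteq\Z_2^k$, which by the hypothesis $2v_i\in U$ is contained in $V_0$. I would exhibit $D$ as the common zero set of two families of atomic formulas. \emph{First:} for each $r\in\Z_2^k\setminus W$ choose a preimage $w^{(r)}\in\Z_4^k$; the atomic formula $c_{w^{(r)}}(x)=0$ expresses $\bar x\neq r$, so this family defines $\{x\setsuchthat\bar x\in W\}$. \emph{Second:} for each $\varepsilon^{(r)}\in\{0,1\}^k$ ranging over a spanning set of the $\Z_2$-forms vanishing on $V_0$, put $h_r(x):=\sum_j\varepsilon^{(r)}_j x_j+\tilde\eta_r(\bar x)$ with $\tilde\eta_r\colon\Z_2^k\to\{0,2\}$ defined by $\tilde\eta_r(\bar v_i):=\sum_j\varepsilon^{(r)}_j(v_i)_j$ and $\tilde\eta_r:=0$ on $\Z_2^k\setminus W$. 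The value $\sum_j\varepsilon^{(r)}_j(v_i)_j$ is even --- this is the sole use of the hypothesis $2v_i\in U$, equivalently $\bar v_i\in V_0$ --- so $\tilde\eta_r$ is a well-defined $\{0,2\}$-valued function vanishing at $0$, hence $\tilde\eta_r$ and $h_r$ lie in $\Clo_k(\A)$. On $\{x\setsuchthat\bar x\in W\}$, the equations $h_r(x)=0$ say, via the recorded fact about $U$, exactly that $x-v_i\in U$ for the unique $i$ with $\bar v_i=\bar x$. Hence the two families together cut out $\bigcup_i(v_i+U)=D$, so $D$ is c.a.d.

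The crux I would expect is the first family: the condition $\bar x\in W$ reads like a disjunction, and over a general algebra on $\Z_4$ the corresponding set need not be c.a.d., but here the operations $2x_1\cdots x_k$ --- equivalently the functions $c_v$ --- let one rewrite each inequality $\bar x\neq r$ as a single atomic formula. The second delicate point is matching the hypothesis $2v_i\in U$ with exactly the parity that makes the affine corrections $\tilde\eta_r$ expressible by terms. I would also note the pitfall that, because the nullary operation $1$ prevents term operations of $\A$ from being homomorphisms, c.a.d.\ sets over $\A$ need not be subuniverses of $\A^k$; the remaining steps --- the residue bookkeeping and the identification $B\cong\Z_2^k$ --- are routine.
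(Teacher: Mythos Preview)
Your argument is correct and follows essentially the same route as the paper's proof. Both directions rest on the normal form of Lemma~\ref{le:term}: in the forward direction you and the paper alike decompose $D$ coset-by-coset modulo $B$, identify $U=D\cap B$ as the kernel of the linear part, and obtain $2v_i\in U$ from the fact that the residue part takes values in $2\Z_4$; your reduction of the linear coefficients to $\{0,1\}$ is a cosmetic repackaging of the paper's matrix $H$ over $\Z_4$. For the converse the paper simply declares the implication ``straightforward'', while you spell out an explicit c.a.d.\ description via the two families $c_{w^{(r)}}(x)=0$ and $h_r(x)=0$; this is a welcome elaboration, and your observation that the hypothesis $2v_i\in U$ (equivalently $\bar v_i\in V_0$) is precisely what forces $\sum_j\varepsilon^{(r)}_j(v_i)_j\in\{0,2\}$ makes the role of that hypothesis transparent.
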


\begin{proof}
 Assume $D$ is c.a.d. We have $f_1,\dots,f_n\in\Clo_k(\A)$ such that
 $D = \{x\in\Z_4^k \setsuchthat f_1(x) = 0, \dots, f_n(x) = 0 \}$.
 By Lemma~\ref{le:term} we have a $k\times n$-matrix $H$ over $\Z_4$ and vectors $b_w\in 2\Z_4^n$
 for $w\in\{0,1\}^k$ such that
\[ D = \bigcup_{w\in\{0,1\}^k} \{ x\in w+B \setsuchthat H\cdot x = b_w \}. \]
 Let $U := \{ x\in B \setsuchthat H\cdot x = 0 \}$. Since $(0,\dots,0)\in D$, we have
$b_{(0,\dots,0)}=(0,\dots,0)$ and $D\cap B = U$.

 Let $w\in\{0,1\}^k$ such that $D\cap w+B \neq \emptyset$, say $v\in D\cap w+B$. It follows that
 $D\cap w+B = v+U$. Since $H\cdot v = b_w \in 2\Z_4^n$, we have $H\cdot(2v) = 0$ and $2v\in U$.
 Hence $D$ is as in \eqref{eq:cadD}.

 The converse implication of the lemma is now straightforward.
\end{proof}

 Finally we show that $\Clo_{cad}(\A)$ is finitely related.

\begin{lem} \label{le:fr}
 $\Clo_{cad}(\A)$ is the set of partial operations with c.a.d. domain over $\A$ that preserve
 all subuniverses of $\A^4$.
\end{lem}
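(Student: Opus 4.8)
The plan is to prove Lemma~\ref{le:fr} by showing that a partial operation $f$ with c.a.d.\ domain $D\subseteq \Z_4^k$ that preserves all subuniverses of $\A^4$ must already be the restriction of a term operation, i.e.\ it has the form \eqref{mod} on $D$ given by Lemma~\ref{le:term}. One direction is trivial (term operations preserve subuniverses). For the nontrivial direction, by composing with a suitable translation we may assume $(0,\dots,0)\in D$ and $f(0,\dots,0)=0$. By Lemma~\ref{le:cad} we know the precise shape of $D$: a union of cosets $v_1+U\cup\dots\cup v_l+U$ of a subgroup $U\le 2\Z_4^k$ with $2v_i\in U$ for all $i$. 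The goal is to pin down enough values of $f$ using the subuniverse-preservation hypothesis to force agreement with a term.

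The key structural observation I would exploit is that $\A$ has center $\equiv_2$, so the ``top layer'' of $f$ behaves linearly modulo $2$: first I would show that preservation of the subuniverse $\{(x,y,z,x-y+z) : x,y,z\in\Z_4\}$ of $\A^4$ (the graph of the Mal'cev/affine operation, which lies in the clone) forces $f(x)\equiv \sum \lambda_i x_i \pmod 2$ for appropriate $\lambda_i$, just from $f$ commuting coordinatewise with $x-y+z$ on triples of points of $D$ whose combination stays in $D$. Subtracting off the linear part, we reduce to the case where $f$ maps $D$ into $2\Z_4=\{0,2\}$ and $f$ is constant on each coset $x+2\Z_4^k$ meeting $D$. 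So $f$ is essentially determined by a Boolean-type function on the index set $\{v_1,\dots,v_l\}$ of cosets, and the content of Lemma~\ref{le:term} says that \emph{every} $\{0,2\}$-valued function that is $2\Z_4^k$-periodic is a sum of $c_{v_i}$'s, hence a term. So it suffices to show that such an $f$, on its restricted domain, still extends; the point is that the functions $c_v$ span all $2\Z_4^k$-periodic $\{0,2\}$-valued maps, and a restriction of a function to a subdomain that is a union of cosets is trivially a sum of (restrictions of) such $c_v$. Thus once we know $f$ takes values in $\{0,2\}$ and is coset-periodic, membership in $\Clo_{cad}(\A)$ is immediate from Lemma~\ref{le:term} --- no further preservation hypotheses are even needed for that layer.

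So the real work is the reduction: showing that preservation of subuniverses of $\A^4$ forces (i) $f$ to be coset-periodic, i.e.\ $f$ depends only on $x \bmod 2\Z_4^k$, and (ii) $f \bmod 2$ to be linear. For (i) I would use that for $u\in U$, the pair relation $\{(x,x+2) : x\in\Z_4\}$ is not a subalgebra, but the ternary trick via the Mal'cev term and the fact that $\{(a,b,c,d): a+c = b+d\}$-type relations detect the failure of $2$-periodicity; more directly, the subuniverse $\{(x,y)\in\Z_4^2 : x\equiv y \bmod 2\}=\ker(\Z_4\to\Z_2)$ and the graph of addition together let one show that if two tuples in $D$ differ by an element of $2\Z_4^k$ then their $f$-values are congruent mod $2$, and then the affine/Mal'cev relation upgrades congruence to equality once values already lie in $\{0,2\}$. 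The main obstacle I anticipate is bookkeeping: one must check that all the witnessing tuples needed to run these arguments actually stay inside $D$ (which is only a union of cosets of $U$, not all of $\Z_4^k$), using the hypotheses $2v_i\in U$ and $v_i-v_j\notin 2\Z_4^k$ from Lemma~\ref{le:cad} --- this is exactly where those side conditions earn their keep. Once the reduction is complete, Lemma~\ref{le:term} closes the argument, and combined with Corollary~\ref{cor:frd} this will yield Theorem~\ref{th:inf}.
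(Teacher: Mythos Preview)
Your overall strategy matches the paper's: reduce to $(0,\dots,0)\in D$ and $f(0,\dots,0)=0$, use Lemma~\ref{le:cad} for the shape of $D$, peel off a linear part $t$, show the remainder $g=f-t$ is constant on each coset $v_i+U$ with values in $\{0,2\}$, and then invoke Lemma~\ref{le:term}.

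Two sub-steps in your sketch are not correctly justified, however. First, your argument for coset-periodicity via ``upgrade $\equiv_2$ to equality once values lie in $\{0,2\}$'' does not work: on $\{0,2\}$ the relation $\equiv_2$ is trivially satisfied and yields nothing. The paper instead uses the subuniverse $M=\{(a,b,a+2c,b+2c):a,b,c\in\Z_4\}$ of $\A^4$ to obtain directly $f(x+u)=f(x)+f(u)$ for all $x\in D$, $u\in U$; your Mal'cev graph gives the same identity (take $y=0$, $z=u$), and from it $g(v_i+u)=g(v_i)$ is immediate since $g|_U=0$ by choice of $t$. Second, and more substantively, you assert that Mal'cev preservation ``forces $f(x)\equiv\sum\lambda_i x_i\pmod 2$'' without saying how; the identity $f(x-y+z)=f(x)-f(y)+f(z)$ alone does not produce consistent $\lambda_i$'s across the distinct cosets $v_i+U$. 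The paper determines $t$ from $f|_U$ (a group homomorphism by the previous step) and then proves $g(v_i)\in 2\Z_4$ separately by observing that $\Clo_1(\A)$ is a subuniverse of $\A^4$: the unary map $h_i(x):=g(xv_i)$ must therefore lie in $\Clo_1(\A)$, and $h_i(0)=h_i(2)=0$ forces $h_i(1)=g(v_i)\in\{0,2\}$. (Your Mal'cev graph would also suffice here, via $2f(v_i)=f(v_i+v_i)=f(2v_i)=t(2v_i)=2t(v_i)$ using the side condition $2v_i\in U$ from Lemma~\ref{le:cad}, but this computation is absent from your proposal.)
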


\begin{proof}
 Let $D \subseteq \Z_4^k$ be c.a.d. over $\A$, and let $f\colon D\to\Z_4$ preserve all subuniverses of $\A^4$.
 We will show that $f$ is the restriction of a term operation on $\A$.
 Since all constants are term operations on $\A$, we may assume w.l.o.g. that $(0,\dots,0)\in D$ and
 $f(0,\dots,0) = 0$.

 Let $U := D\cap 2\Z_4^k$. By Lemma~\ref{le:cad} we have $v_1,\dots, v_l \in \Z_4^k$ such that
 $v_i - v_j \not\in 2\Z_4^k$ whenever $i\neq j$, $2v_i\in U$ for all $i$, and
 $D  = v_1+U \cup\dots\cup v_l+U$.
 Note that
\[ M := \{ \mtt{a}{b}{a+2c}{b+2c} \setsuchthat a,b,c\in A \} \]
 is a subuniverse of $\A^{2\times 2}$ and hence preserved by $f$.
 Thus for all $x\in D, u\in U$ with $x+u\in D$ we have
\[ \mtt{f(0,\dots,0)}{f(x)}{f(u)}{f(x+u)} \in M. \]
 Hence
\begin{equation} \label{eq:hom}
 \forall x\in D, u\in U\colon x+u\in D \Rightarrow f(x+u) = f(x)+f(u).
\end{equation}
 In particular $f|_U$ is a group homomorphism from $U$ to $\Z_4$.
 Hence we have $t\in\Clo_k(\algop{\Z_4}{+})$ such that $f|_U = t|_U$.
 For $x\in D$ define
\[ g(x) := f(x)-t(x). \]
 Let $i\in\{1,\dots,l\}$. From~\eqref{eq:hom} it follows that
\begin{equation} \label{eq:const}
 \forall u\in U\colon g(v_i+u) = g(v_i).
\end{equation}
 We claim that
\begin{equation} \label{eq:in2Z4}
 g(v_i) \in 2\Z_4.
\end{equation}
 For the proof consider
\[ h_i\colon \Z_4\to\Z_4, x\mapsto g(x v_i). \]
 Note that $h_i$ is indeed a total unary operation on $\Z_4$ since $2v_i\in U$ implies that
 $\Z_4 v_i \subseteq D$.
 Since $\Clo_1(\A)$ embeds into $\A^4$, we have that $g$ preserves $\Clo_1(\A)$ and consequently
 $h_i\in\Clo_1(\A)$.
 Now $h_i(0) = g(0) = 0$ and $h_i(2) = g(2v_i) = 0$. As $h_i$ is a term operation on $\A$,
 this implies either $h_i(x) = 0$ for all $x\in\Z_4$ or $h_i(x) = 2x$ for all $x\in\Z_4$.
 In any case $h_i(1) = g(v_i)$ is in $2\Z_4$ which proves~\eqref{eq:in2Z4}.

 From~\eqref{eq:const} and~\eqref{eq:in2Z4} we obtain that for all $x\in D$
\[ g(x) = \sum\{ c_{v_i}(x) \setsuchthat g(v_i) = 2, i\in\{1,\dots,l\} \}. \]
 Hence $f$ is the restriction of the term operation
 $t+\sum\{ c_{v_i} \setsuchthat g(v_i) = 2, i\in\{1,\dots,l\} \}$.
\end{proof}

\emph{Proof of Theorem~\ref{th:inf}.}
 Since $\Clo_{cad}(\A)$ is finitely related by the elements of $\mathcal{R}$ by Lemma~\ref{le:fr},
 $\undertilde{\A}$ dualizes $\A$ by Corollary~\ref{cor:frd}.
\qed

\section{Problems}

 Pontryagin duality yields that finite abelian groups are dualizable. In general,
 abelian algebras in congruence modular varieties are polynomially equivalent to modules over rings.
 Although the structure of these algebras is well understood, to our knowledge the following is
 still open.

\begin{pro}
 Is every finite abelian algebra in a congruence modular variety dualizable?
 Is every finite module over a ring dualizable?
\end{pro}

 More generally we would also like a characterization of all nilpotent algebras that are dualizable.
 Is supernilpotence the only obstacle for dualizability? Can we prove some kind of converse to
 Theorem~\ref{th:sn}?

\begin{pro}
 Are the following equivalent for every finite nilpotent algebra $\A$?
\begin{enumerate}
\item $\A$ is dualizable.
\item For every subalgebra $\B$ of $\A$, all supernilpotent congruences of $\B$ are abelian.
\end{enumerate}
\end{pro}

\section{Appendix}

In \cite{DP:LAE}, Davey et al. develop a new approach to dualities that extends the theory (on the algebraic side)
to structures with partial operations and relations, and also places it in a slightly more restricted setting
(termed the \emph{symmetric setting}).  An extension to the duality
theory from \cite{CD:NDWA} is discussed in an appendix and termed the \emph{usual setting}.
The differences between the various settings are restricted to trivial members of the involved categories,
however they do effect part of the argument from Section \ref{sec:clone}. We have therefore decided to base the arguments in that section on \cite{CD:NDWA}, and to included this appendix showing how to derive the same
conclusions in the \emph{symmetric setting} of \cite{DP:LAE}.

 We need to make slight adjustments to the definitions from Section \ref{sec:clone}.
In order to state the result in the language of \cite{DP:LAE} we will formulate it for
structures, although we will only use structures that are effectively algebras.
We will also slightly change the definition of $\Clo_{cad}$ to $\Clo_{cad}^*$. This change is actually
not necessary in formulating the result, but reflects that the symmetric setting avoids
partial operations with empty domains. Note
that the modifier ${}^*$ is not from \cite{DP:LAE} but is used here to clearly distinguish the two definitions.

 Duality in \cite{DP:LAE} is defined not for algebras but for base structures.   A \emph{base structure} is a
 finite non-empty
 structure $\algop{A}{\mathcal{F},\R}$ where $\mathcal{F}$ is a set of partial operations on $A$ and
 $\R$ a set of relations on $A$, where (in the symmetric setting)
 all functions are non-nullary and have non-empty domain, and all relations are non-empty.

For our purposes, let $\algop{A_0}{\mathcal{F}}$ be an
algebra without  nullary operations. This modification is inconsequential,
as we will also (see below) exclude the empty structure from our modified version of
$\mathcal X$.
Hence we may replace any constants by the corresponding
unary function with constant image. We will identify $\A_0$ with the base structure $\algop{A_0}{\mathcal{F},\emptyset}$,
keeping the name $\A_0$ for both. We will leave it to the reader to check that dualizability of the algebra
$\A_0$, as defined in Section \ref{sec:clone} corresponds to dualizability of the base structure $\A_0$ in the
so called 
\emph{usual setting} of \cite{DP:LAE}.

Duality in the context of the {\em symmetric setting} is characterized by replacing
the category $\mathcal{A}$
with a slightly more restricted versions of itself.
Let
$\mathcal{A}^*$ be the category $\ISPP{\A_0}$ of all base structures isomorphic to substructures
 of powers of $\A_0$ over non-empty index sets, together with their morphisms.
So when considered as a class of algebras, $\mathcal{A}^*$ will either equal $\mathcal{A}$ or can be obtained
 from $\mathcal{A}$ by removing all one-element algebras. Similarly, on the topological side we replace $\mathcal X$ with $\mathcal{X}^*$, which is obtained from $\mathcal X$ by removing the empty structure  and
 its morphisms (if present).
 We define the functors $D$ and $E$ as before, and we get a notion of duality or finite level duality
 in the symmetric setting by restricting the corresponding definitions to structures from $\mathcal{A}^*$.

As before we define a subset $D\subseteq A_0^k$ for $k \ge 1$ to be
 \emph{c.a.d. (conjunct-atomic definable)} over $\A_0$ if it is definable
 by a conjunction of atomic formulas over $\A_0$.
 The set of all finitary {\em non-empty} relations that are c.a.d. over
 $\A_0$ is called $\Def_{ca} (\A)$.
We denote the set of all restrictions of term operations on $\A$ to domains in $\Def_{ca}(\A)$ by $\Clo_{cad}^*(\A)$
(note that in \cite{DP:LAE}, the elements of $\Clo_{cad}^*(\A)$ are called \emph{structural functions}).

 Let $R\subseteq A_0^n$ be non-empty.
 For non-empty $D\subseteq A_0^k$ a partial operation $f\colon D\to A_0$ \emph{preserves} $R$ if
\[ \forall r_1,\dots, r_k\in R\colon f^{A^n}(r_1,\dots, r_k) \in R \text{ whenever defined,} \]
where $f^{A^n}$ is defined as in Section \ref{sec:clone}.
 We say $\Clo_{cad}^*(\A_0)$ is \emph{finitely related} if there exist finitely many subuniverses
 $R_1,\dots,$ $R_l$ of finite powers of $\A_0$ such that the following are equivalent for every partial
 operation $f$ on $A$ with non-empty c.a.d. domain over~$\A_0$:
\begin{enumerate}
\item $f$ preserves the relations $R_1,\dots,R_l$,
\item $f\in\Clo_{cad}^*(\A_0)$.
\end{enumerate}

We are now able to state a sufficient condition for dualizability in the sense of~\cite{DP:LAE}. 
Instead of a direct derivation in this setting (in a similar fashion
 to Section \ref{sec:clone} with the 
role of the Third Duality Theorem replaced by Theorem 4.1 of \cite{DP:LAE}), we will instead translate 
between the theories.

\begin{cor} \label{le:frd2}
 Let $\A_0$ be a finite algebra without nullary operations.
 Assume that $\Clo_{cad}^*(\A_0)$ is finitely related by $R_1,\dots,R_l$.
 Then $\A_0$ is dualized in the symmetric setting of~\cite{DP:LAE} by
 $$\algop{A_0}{\emptyset,\{R_1,\dots,R_l\}, \tau},$$ where $\tau$ is the discrete topology.
\end{cor}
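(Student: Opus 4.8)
The plan is to prove Corollary~\ref{le:frd2} by translating the hypothesis into the "usual setting" of~\cite{CD:NDWA}, applying Corollary~\ref{cor:frd} there, and then transporting the conclusion back into the symmetric setting of~\cite{DP:LAE}. The key point is that the symmetric setting differs from the usual setting only in the treatment of trivial objects: on the algebraic side one discards one-element algebras, and on the topological side one discards the empty structure. So the main task is to check that these excisions do not disturb the equivalence of categories witnessed by the natural evaluation maps $e_\A$.

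First I would observe that $\Clo_{cad}^*(\A_0)$ and $\Clo_{cad}(\A_0)$ agree on all partial operations with non-empty c.a.d.\ domain; the only difference is that $\Clo_{cad}(\A_0)$ additionally contains the (unique) partial operation with empty domain. Since any relation $R$ that is a subuniverse of a finite power of $\A_0$ is automatically preserved by the empty-domain partial operation (vacuously), finite relatedness of $\Clo_{cad}^*(\A_0)$ by $R_1,\dots,R_l$ is equivalent to finite relatedness of $\Clo_{cad}(\A_0)$ by the same relations: a partial operation $f$ with c.a.d.\ domain over $\A_0$ preserves $R_1,\dots,R_l$ iff it lies in $\Clo_{cad}(\A_0)$, whether or not $\dom f=\emptyset$. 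Hence the hypothesis of Corollary~\ref{cor:frd} is satisfied, and $\A_0$ is dualized in the usual sense of~\cite{CD:NDWA} by $\undertilde{\A}:=\algop{A_0}{\emptyset,\{R_1,\dots,R_l\},\tau}$.

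Next I would invoke the remark, made in the excerpt and in the appendix of~\cite{DP:LAE}, that dualizability of the algebra $\A_0$ in the sense of Section~\ref{sec:clone} corresponds to dualizability of the base structure $\A_0=\algop{A_0}{\mathcal F,\emptyset}$ in the usual setting of~\cite{DP:LAE}. It then remains to pass from the usual setting to the symmetric setting. For this I would note that the evaluation map $e_\A\colon\A\to ED(\A)$ is unchanged by the passage, and check the two excisions separately. On the algebraic side, if $\A_0$ is itself non-trivial then every one-element member of $\mathcal A$ has a unique homomorphism to $\A_0$ for each one-element target in $\undertilde{\A}$; one verifies directly that $e_\A$ is an isomorphism for one-element $\A$, so removing these objects to form $\mathcal A^*$ can only make the dualization statement easier, not harder. (If $\A_0$ is itself one-element the statement is vacuous.) On the topological side, removing the empty structure from $\mathcal X$ to form $\mathcal X^*$ and removing its morphisms does not affect the values of $E$ on the remaining objects, nor the image of $D$, since $D(\A)$ is non-empty for every $\A\in\mathcal A^*$ (the projection-style homomorphisms furnish points). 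Therefore $e_\A$ remains an isomorphism for every $\A\in\mathcal A^*$, which is exactly dualization in the symmetric setting.

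The step I expect to require the most care is the bookkeeping around trivial objects: making sure that, after deleting one-element algebras and the empty topological structure, the restricted functors $D$ and $E$ still land in the restricted categories and that no morphisms have been lost that would be needed to witness $e_\A$ being an isomorphism. All of this is routine once one unwinds the definitions, and indeed it is exactly the content that~\cite{DP:LAE} verifies in its appendix when relating the two settings; so in the write-up I would cite that appendix for the precise comparison and keep the argument here to the two short observations above.
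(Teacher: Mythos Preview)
Your proposal is correct and follows essentially the same route as the paper: first observe that the empty-domain operation vacuously preserves every relation, so finite relatedness of $\Clo_{cad}^*(\A_0)$ by $R_1,\dots,R_l$ yields finite relatedness of $\Clo_{cad}(\A_0)$ by the same relations; then apply Corollary~\ref{cor:frd} in the usual setting; finally transfer to the symmetric setting via the appendix of~\cite{DP:LAE}. The only difference is that the paper cites a specific result (Lemma~A.4 of~\cite{DP:LAE}) for the last step and notes that the required modifications to $\A_0$ and its alter ego are vacuous here, whereas you sketch the bookkeeping about trivial objects directly---both amount to the same argument.
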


\begin{proof}
As the empty operation is compatible with all relations, $\Clo_{cad}(\A_0)$ is also finitely related by
 $R_1,\dots,R_l$
and hence is dualized by $\algop{A_0}{\emptyset,\{R_1,\dots,R_l\}, \tau}$  in the usual setting by Corollary \ref{cor:frd}. By Lemma A.4 of \cite{DP:LAE} we get a duality in the symmetric setting if we remove all empty
operations from $\A_0$ and replace all nullary operations of $\algop{A_0}{\emptyset,\{R_1,\dots,R_l\}, \tau}$
with unary ones. However, in our case these changes preserve both the algebraic and the topological structure. The result follows.
\end{proof}

\noindent {\bf Acknowledgment}
The first author  has received funding from the
European Union Seventh Framework Programme (FP7/2007-2013) under
grant agreement no.\ PCOFUND-GA-2009-246542 and from the Foundation for
Science and Technology of Portugal.

 The second author acknowledges support from the Portuguese Project PEst-OE/MAT/UI0143/ 2011
 of CAUL
financed by FCT
 and FEDER.

The authors would like to thank Brian Davey for his helpful suggestions,
Maria Jo\~ao Gouveia and Lu\'is Sequeira for their support in conducting this research and writing this article, as well as Michael Kinyon for his suggestions regarding literature.

\end{document}